\documentclass[12pt]{article}
\usepackage[]{amsmath,amssymb}
\usepackage[]{amsthm,enumerate}
\usepackage[]{comment}
\usepackage{url}

\newtheorem{theorem}{Theorem}[section]
\newtheorem{proposition}[theorem]{Proposition}
\newtheorem{lemma}[theorem]{Lemma}
\newtheorem{corollary}[theorem]{Corollary}
\theoremstyle{definition}

\theoremstyle{remark}
\newtheorem{remark}[theorem]{Remark}

\newcommand{\QQ}{\mathbb{Q}}
\newcommand{\ZZ}{\mathbb{Z}}
\newcommand{\RR}{\mathbb{R}}

\newcommand{\cC}{\mathcal{C}}

\newcommand{\cP}{\mathcal{P}}
\DeclareMathOperator{\wt}{wt}
\DeclareMathOperator{\supp}{supp}
\DeclareMathOperator{\rank}{rank}


\begin{document}


\title{Quasi-unbiased Hadamard matrices and weakly unbiased
Hadamard matrices: a coding-theoretic approach}
\date{\today}

\author{
Makoto Araya\thanks{Department of Computer Science,
Shizuoka University,
Hamamatsu 432--8011, Japan.
email: araya@inf.shizuoka.ac.jp},
Masaaki Harada\thanks{
Research Center for Pure and Applied Mathematics,
Graduate School of Information Sciences,
Tohoku University, Sendai 980--8579, Japan.
email: mharada@m.tohoku.ac.jp.}
and
Sho Suda\thanks{Department of Mathematics Education, Aichi University of Education,
1 Hirosawa, Igaya-cho, Kariya 448--8542, Japan.
email: suda@auecc.aichi-edu.ac.jp}
}

\maketitle


\vspace*{-1cm}
\begin{center}
{\bf Dedicated to Professor Satoshi Yoshiara on his 60th birthday}
\end{center}

\begin{abstract}
This paper is concerned with quasi-unbiased Hadamard matrices
and weakly unbiased Hadamard matrices,
which are generalizations of unbiased Hadamard matrices, equivalently
unbiased bases.
These matrices are studied from the viewpoint of coding theory.
As a consequence of a coding-theoretic approach, 
we provide upper bounds on the number of mutually
quasi-unbiased Hadamard matrices.
We give classifications of a certain class of self-complementary 
codes for modest lengths.  
These codes give quasi-unbiased Hadamard matrices
and weakly unbiased Hadamard matrices.
Some modification of the notion of
weakly unbiased Hadamard matrices
is also provided.
\end{abstract}

\section{Introduction}

Two Hadamard matrices $H,K$ of order $n$ are said to be 
{\em unbiased} if $(1/\sqrt{n})HK^{T}$ is also 
a  Hadamard matrix of order $n$, where
$K^T$ denotes the transpose of $K$.
This means that the absolute value of any entry of $HK^{T}$
is $\sqrt{n}$.
The notion of unbiased Hadamard matrices is 
essentially the same as that of unbiased bases in $\RR^n$.
It is a fundamental problem to determine the 
maximum size among sets of mutually unbiased Hadamard matrices.
Much work has been done concerning this fundamental problem
(see~\cite{BK10}, \cite{BSTW}, 
\cite{CCKS}, \cite{CS73}, \cite{DGS2},
\cite{HKO}, \cite{KSS}, \cite{LMO}, \cite{WB}).



Recently, the notion of unbiased Hadamard matrices has been 
generalized in~\cite{BK10}, \cite{HKO} and~\cite{NS} 
(see also Section~\ref{sec:Hmat} for the motivation).
Two weighing matrices $W_1,W_2$ of order $n$ and weight $k$ are 
{\em unbiased} if 
$(1/\sqrt{k}) W_1 W_2^T$ is a weighing matrix
of order $n$ and weight $k$~\cite{HKO}.
As a natural generalization, quasi-unbiased weighing matrices
are defined in~\cite{NS} as follows:
$W_1,W_2$ are {\em quasi-unbiased for parameters $(n,k,l,a)$} if 
$(1/\sqrt{a}) W_1 W_2^T$ is a weighing matrix
of weight $l$.
In this paper, we restrict our investigation to the case where
$W_1,W_2$ are Hadamard, in order to adopt a coding-theoretic approach. 
We say that Hadamard matrices $H,K$ are 
{\em quasi-unbiased} Hadamard matrices with parameters $(l,a)$
if $(1/\sqrt{a})HK^{T}$ is a weighing matrix of weight $l$.
Note that the absolute value of any entry of $HK^{T}$
is $0$ or $\sqrt{a}$.
Two Hadamard matrices $H,K$ are {\em weakly unbiased} if
$a_{ij} \equiv 2 \pmod 4$ for $i,j \in \{1,2,\ldots,n\}$ and
$|\{ |a_{ij}| \mid i,j \in \{1,2,\ldots,n\}\}| \le 2$, where
$a_{ij}$ denotes the $(i,j)$-entry of $HK^T$~\cite{BK10}.
Hadamard matrices $H_1,H_2,\ldots,H_f$ are said to be 
{\em mutually} unbiased (resp.\ quasi-unbiased and weakly unbiased) Hadamard matrices
if any pair of two distinct  of them is unbiased 
 (resp.\ quasi-unbiased and weakly unbiased) 
Hadamard matrices.
In this paper, by adopting a coding-theoretic approach,
we study the maximum size among sets of mutually 
quasi-unbiased Hadamard matrices and 
weakly unbiased Hadamard matrices.


This paper is organized as follows.
In Section~\ref{sec:Pre}, we give definitions and some known results
of Hadamard matrices, codes and association schemes
used in this paper.
In Section~\ref{sec:B}, we give two upper bounds on the number of
codewords of binary self-complementary codes 
(Theorems~\ref{thm:SCbound} and~\ref{thm:SCLP}).
In Sections~\ref{sec:basic} and~\ref{sec:F2},
we study the existence of mutually quasi-unbiased Hadamard matrices.
In Section~\ref{sec:F2},
we characterize binary self-complementary
$(n,2fn)$ codes whose existence is equivalent to that of 
a set of $f$ mutually quasi-unbiased 
Hadamard matrices of order $n$ (Theorem~\ref{thm:F2}).
By Theorems~\ref{thm:SCbound} and~\ref{thm:SCLP},
this characterization derives 
upper bounds on the size of sets of mutually quasi-unbiased 
Hadamard matrices (Theorem~\ref{thm:bound}).
For modest lengths, we also give classifications of some binary 
self-complementary codes satisfying the conditions 
in Theorem~\ref{thm:F2}, in order to construct 
mutually quasi-unbiased Hadamard matrices.
In analogy to the case of quasi-unbiased Hadamard matrices,
Sections~\ref{sec:weakly} and~\ref{sec:weakF2}
study the existence of weakly unbiased Hadamard matrices.
Theorem~\ref{thm:weakUB} shows that the size of a set of 
mutually weakly unbiased Hadamard matrices 
is at most $2$.
Similar to Theorem~\ref{thm:F2},
we characterize binary self-complementary
codes whose existence is equivalent to that of a pair of weakly unbiased 
Hadamard matrices of order $n$ (Theorem~\ref{thm:weakF2}).
For modest lengths, we also give classifications of some binary 
self-complementary codes satisfying the conditions 
in Theorem~\ref{thm:weakF2}, in order to construct 
weakly unbiased Hadamard matrices.
Finally, in Section~\ref{sec:II}, 
as a modification of the notion of
weakly unbiased Hadamard matrices, 
we introduce the notion of Type~II weakly unbiased Hadamard matrices.
We establish results which are analogue to those of quasi-unbiased 
Hadamard matrices and weakly unbiased Hadamard matrices.

All computer calculations in this paper were
done by programs in the algebra software
{\sc Magma}~\cite{Magma} and
programs in the language C.

\section{Preliminaries}\label{sec:Pre}
In this section, we give definitions and some known results
of Hadamard matrices, codes and association schemes used in this paper.

\subsection{Hadamard matrices}\label{sec:Hmat}

A {\em Hadamard matrix} of order $n$ is an $n \times n$ $(1,-1)$-matrix
$H$ such that $H H^T = nI_n$,  
where $I_n$ is the identity matrix of order $n$.
It is well known that the order $n$ is necessarily $1,2$, or a multiple of $4$.
Throughout this paper,
we assume that $n \ge 2$ unless otherwise specified.
A {\em weighing matrix} of order $n$ and weight $k$ is an
$n \times n$ $(1,-1,0)$-matrix $W$ such that
$W W^T=kI_n$.
Of course, a weighing matrix of order $n$ and weight $n$ is a Hadamard matrix.
The two distinct rows $r_i,r_j$ $(i \ne j)$
of a weighing matrix $W$ of order $n$ and weight $k$
are 
orthogonal under the standard inner product
$r_i \cdot r_j$
and $W$ contains exactly $k$ nonzero entries in each row and each column. 
Two Hadamard matrices $H,K$ are said to be {\em equivalent}
if there exist $(1,-1,0)$-monomial matrices $P, Q$ with $K = PHQ$.
All Hadamard matrices of orders up to $32$ have been classified
(see~\cite[Chap.~7]{HSS-OA} for orders up to $28$ and 
\cite{KT13} for order $32$, see also~\cite{Hadamard}).
The numbers of inequivalent Hadamard matrices
of orders $4,8,12,16,20,24,28,32$ are 
$1, 1, 1, 5, 3, 60, 487, 13710027$, respectively.

Two Hadamard matrices $H,K$ of order $n$ are said to be 
{\em unbiased} if $(1/\sqrt{n})HK^{T}$ is also 
a  Hadamard matrix of order $n$, where
$K^T$ denotes the transpose of $K$.
This means that the absolute value of any entry of $HK^{T}$
is $\sqrt{n}$.
Hadamard matrices are said to be 
{\em mutually} unbiased Hadamard matrices
if any pair of two distinct of them is unbiased 
Hadamard matrices.
The existence of $f$ mutually unbiased Hadamard matrices of order $n$
is equivalent
to that of $f+1$ mutually unbiased bases in 
$\RR^n$~\cite[Observation~2.1]{BSTW}.
It is a fundamental problem to determine the 
maximum size among sets of mutually unbiased Hadamard  matrices of
order $n$.
For example, it follows from~\cite[Observation~2.1]{BSTW}
and~\cite[Table~1]{DGS2} that $f \le n/2$.

Recently, generalizations of unbiased Hadamard matrices have
been presented~\cite{BK10}, \cite{HKO} and~\cite{NS}.
Two weighing matrices $W_1,W_2$ of order $n$ and weight $k$ are 
{\em unbiased} if 
$(1/\sqrt{k}) W_1 W_2^T$ is a weighing matrix
of weight $k$~\cite{HKO}.
As a natural generalization, quasi-unbiased weighing matrices
are defined in~\cite{NS} as follows:
$W_1,W_2$ are  {\em quasi-unbiased for parameters $(n,k,l,a)$} if 
$(1/\sqrt{a}) W_1 W_2^T$ is a weighing matrix 
of order $n$ and weight $l$.
This notion was introduced to show that Conjecture~32 in~\cite{BKR}
is true.
In addition, a set of $f$ mutually quasi-unbiased
weighing matrices 
for parameters $(n,k,l,a)$ implies a set of $f-1$ mutually unbiased
weighing matrices of order $n$ and weight $l$.
In this paper, we restrict our investigation to the case where
$W_1,W_2$ are Hadamard in the definition of quasi-unbiased weighing
matrices, in order to adopt a coding-theoretic approach. 
Our restriction is also natural for a consideration of a certain
generalization of the situation in~\cite[Conjecture~32]{BKR}.
We say that Hadamard matrices $H,K$ of order $n$ are
{\em quasi-unbiased} Hadamard matrices with parameters $(l,a)$
if $(1/\sqrt{a})HK^{T}$ is a weighing matrix of weight $l$.
Equivalently,  the absolute value of any entry of $HK^{T}$
is $0$ or $\sqrt{a}$.
Two Hadamard matrices $H,K$ are {\em weakly unbiased} if
$a_{ij} \equiv 2 \pmod 4$ for $i,j \in \{1,2,\ldots,n\}$ and
$|\{ |a_{ij}| \mid i,j \in \{1,2,\ldots, n\}\}| \le 2$, where
$a_{ij}$ denotes the $(i,j)$-entry of $HK^T$~\cite{BK10}.
A pair of weakly unbiased Hadamard matrices is constructed
from that of unbiased quaternary complex Hadamard 
matrices satisfying a certain condition~\cite[Theorem~14]{BK10}.

Throughout this paper,
in the presentation of Hadamard matrices, 
we use $+,-$ to denote $1,-1$, respectively.

\subsection{Binary codes and $\ZZ_4$-codes}\label{sec:2.2}

Let $\ZZ_{2k}\ (=\{0,1,\ldots,2k-1\})$ denote the ring of integers
modulo $2k$.
A {\em $\ZZ_{2k}$-code} $C$ of length $n$ 
is a subset of $\ZZ_{2k}^n$.
A $\ZZ_{2k}$-code $C$ is called {\em linear}
if $C$ is a $\ZZ_{2k}$-submodule of $\ZZ_{2k}^n$.
Usually $\ZZ_2$-codes are called {\em binary}.
In this paper, we deal with binary codes and $\ZZ_4$-codes.
In addition, codes mean binary codes unless otherwise specified. 

The (Hamming) {\em distance} $d(x,y)$ between
two vectors $x$ and $y$ of $\ZZ_{2k}^n$ is the number of 
components in which they differ.
Let $C$ be a $\ZZ_{2k}$-code of length $n$.
A vector of $C$ is called a {\em codeword} of $C$.
The {\em minimum} (Hamming) {\em distance} $d_H(C)$ of $C$ is the smallest (Hamming)
distance among all pairs of two distinct codewords of $C$.
A {\em generator matrix} of a linear $\ZZ_{2k}$-code 
is a matrix such that the rows generate the code and no
proper subset of the rows of the matrix generates the code.
For a linear $\ZZ_{2k}$-code $C$
of length $n$ and vectors $x_1,x_2,\ldots,x_s \in\ZZ_{2k}^n$,
we denote by $\langle C, x_1,x_2,\ldots,x_s \rangle$ 
the linear $\ZZ_{2k}$-code generated by the
codewords of $C$ and $x_1,x_2,\ldots,x_s$.
Let $S_n$ denote the symmetric group of degree $n$.
For $x \in \ZZ_{2k}^n$ and $\sigma \in S_n$, 
let $\sigma(x)$ denote the vector obtained from $x$,
by the permutation $\sigma$ of the coordinates.
For $j \in \{1,2,\ldots,n\}$,
let $\tau_j(x)$ denote the vector obtained from $x$,
by changing the sign of the $j$-th coordinate.
In addition, set 
$\sigma(C)=\{\sigma(c) \mid c \in C\}$ and
$\tau_j(C)=\{\tau_j(c) \mid c \in C\}$.

A binary $(n,M)$ code is a binary code of length $n$
with $M$ codewords.
A binary $(n,M,d)$ code is a binary $(n,M)$ code with minimum distance $d$.
A binary  $[n,k]$ code means a binary linear code of length $n$
with $2^k$ codewords.
A binary  $[n,k,d]$ code means a binary $[n,k]$ code
with minimum distance $d$.
The {\em distance distribution} of a binary code $C$ of length $n$
is defined as $(A_0(C),A_1(C),\ldots,A_n(C))$,
where
\[
A_i(C)=\frac{1}{|C|} |\{(x,x') \mid x,x' \in C, d(x,x')=i \}|
\quad (i=0,1,\ldots,n).
\]
A binary code $C$ is called {\em self-complementary} if
$x+\mathbf{1}\in C$ for any $x \in C$, 
where $\mathbf{1}$ denotes the all-one vector.
Two binary $(n,M,d)$ codes $C,D$ are {\em equivalent}
if there exist a permutation $\sigma \in S_n$ and a vector $x \in
\ZZ_2^n$ such that $D=x+\sigma(C)$.

A Hadamard matrix is {\em normalized} if 
all entries in the first row and the first column are $1$.
Let $H$ be a normalized Hadamard matrix of order $n$.
Throughout this paper,
we denote by $C(H)$ 
the binary $(n,2n)$ code consisting of the $2n$ row vectors of 
$(1,0)$-matrices 
$(H+J_{n})/2$ and $(-H+J_{n})/2$, 
where $J_n$ denotes the $n \times n$ all-one matrix.
The code $C(H)$ is often called a Hadamard code.
It is trivial that
$C(H)$ is a self-complementary code with distance
distribution $(A_0(C),A_{n/2}(C),A_n(C))=(1,2n-2,1)$.

The {\em Lee weight} $\wt_L(x)$
of a vector $x=(x_1,x_2,\ldots,x_n)$ of $\ZZ_4^n$ is
$n_1(x)+2n_2(x)+n_3(x)$, 
where $n_{\alpha}(x)$ denotes 
the number of components $i$ with $x_i=\alpha$ $(\alpha=0,1,2,3)$.
The {\em Lee distance} $d_L(x,y)$ between
two vectors $x$ and $y$ of $\ZZ_{4}^n$ is $\wt_L(x-y)$.
The {\em minimum Lee distance} $d_L(\cC)$
of a $\ZZ_4$-code $\cC$ 
is the smallest Lee distance among all pairs of two distinct 
codewords of $\cC$.
The {\em Gray map} $\phi$ is defined as a map from
$\ZZ_4^n$ to $\ZZ_2^{2n}$
mapping $(x_1,x_2,\ldots,x_n)$ to
$(\phi(x_1),\phi(x_2),\ldots,\phi(x_n))$,
where $\phi(0)=(0,0)$, $\phi(1)=(0,1)$, $\phi(2)=(1,1)$ and
$\phi(3)=(1,0)$.
If $\cC$ is a $\ZZ_4$-code of length $n$
and minimum Lee distance $d_L(\cC)$, 
then the Gray image $\phi(\cC)$ is a
binary $(2n,|\cC|,d_L(\cC))$ code.
The {\em Lee distance distribution} of 
a $\ZZ_4$-code $\cC$ of length $n$ is defined as
$(A_0(\cC),A_1(\cC),\ldots,A_{2n}(\cC))$,
where
\[
A_i(\cC)=\frac{1}{|\cC|} |\{(x,x') \mid x,x' \in \cC, d_L(x,x')=i \}|
\quad (i=0,1,\ldots,2n).
\]
Two linear $\ZZ_4$-codes $\cC,\cC'$ of length $n$ are {\em equivalent} if 
there exist $\sigma \in S_n$ and $j_1,j_2,\ldots,j_k \in \{1,2,\ldots,n\}$
such that 
$\cC=\tau_{j_1}\tau_{j_2}\cdots\tau_{j_k}\sigma(\cC')$.
Let $G(1,m)$ denote a generator matrix of the first order 
binary Reed--Muller code $RM(1,m)$ of length $2^m$.
The first order {\em Reed--Muller $\ZZ_4$-code $ZRM(1,m)$} 
is defined as the linear $\ZZ_4$-code of length $2^m$, which is
generated by the rows of the matrix
$
\left( \begin{smallmatrix}
11 &\cdots &  11 \\
  &2G(1,m) &
\end{smallmatrix}
\right)
$,
where we regard $2G(1,m)$ as a $\ZZ_4$-matrix~\cite{Z4-HKCSS}.

\subsection{Association schemes}\label{sec:AS}

Let $X$ be a finite set and $\{R_0,R_1,\ldots,R_n\}$ be a set of non-empty subsets of $X\times X$.
Let $A_i$ denote the adjacency matrix of the digraph 
with vertex set $X$ and arc set $R_i$ for $i=0,1,\ldots, n$.
The pair $(X,\{R_i\}_{i=0}^n)$ is called a {\em symmetric association
scheme} of class $n$ if the following conditions hold:
\begin{itemize}
\item $A_0 =I_{|X|}$,
\item $\sum_{i=0}^nA_i=J_{|X|}$,
\item $A_i^T=A_i$ for $i \in \{1,2,\ldots,n\}$,
\item $A_iA_j=\sum_{k=0}^n p_{i,j}^kA_k$,
where $p_{i,j}^k$ are nonnegative integers  ($i,j \in \{0,1,\ldots,n\}$).
\end{itemize}
The vector space $\mathcal{A}$ over $\mathbb{R}$ spanned by the matrices
$A_i$ forms an algebra.
Since $\mathcal{A}$ is commutative and semisimple, 
$\mathcal{A}$ has a unique basis of
primitive idempotents $E_0=\frac{1}{|X|}J_{|X|},E_1,\ldots,E_n$.
The algebra $\mathcal{A}$ is closed under the ordinary multiplication
and entry-wise multiplication denoted by $\circ$. 
We define 
the Krein numbers 
$q_{i,j}^k$ for $i,j,k\in \{0,1,\ldots,n\}$ as 
$E_i\circ E_j=\frac{1}{|X|}\sum_{k=0}^n q_{i,j}^kE_k$.
It is known that the Krein numbers are nonnegative real numbers 
(see~\cite[Lemma~2.4]{D}). 
Since $\{A_0,A_1,\ldots,A_n\}$ forms 
a basis of $\mathcal{A}$, there exists a matrix $Q=(q_{ij})$ 
with
$E_i=\frac{1}{|X|}\sum_{j=0}^n q_{ji}A_j$.
A symmetric association scheme $(X,\{R_i\}_{i=0}^n)$ is said to be 
{\em $Q$-polynomial} 
if for each $i\in\{0,1,\ldots,n\}$, there exists a polynomial $v_i(z)$ of degree $i$ such that $q_{ji}=v_i(q_{j1})$ for all $j\in\{0,1,\ldots,n\}$. 
We say that a $Q$-polynomial association scheme is {\em $Q$-bipartite} if $q_{i,j}^k=0$ for all $i,j,k\in\{1,2,\ldots,n\}$ such that $i+j+k$ is odd. 

There exists a matrix $S=(S_0\ S_1\ \cdots\ S_n)$ 
whose rows and columns are indexed by $X$, 
satisfying that $SS^T=|X|I_{|X|}$ and $S$ diagonalizes the adjacency matrices, 
where $E_i=\frac{1}{|X|}S_iS_i^T$ for $i\in \{0,1,\ldots,n\}$~\cite[p.~11]{D}.
We then define the $i$-th {\em characteristic matrix} $G_i$ of a subset $C$ of $X$ as the submatrix of $S_i$ that lies in the rows indexed by $C$.

Suppose that 
$X=\mathbb{Z}_2^n$ and $R_i=\{(x,y)\mid x,y\in X, d(x,y)=i\}$ 
for $i=0,1,\ldots,n$.  
Then the pair $(X,\{R_i\}_{i=0}^n)$ is a symmetric association scheme,
which is called the {\em binary Hamming} association scheme. 
The binary Hamming association scheme is a $Q$-bipartite $Q$-polynomial 
association scheme with the polynomials $v_i(z)=K_i(n-2z)$, 
where $K_i(z)$ is the Krawtchouk polynomial of degree $i$ defined as 
$
K_i(z)=\sum_{j=0}^i(-1)^j \binom{z}{j}\binom{n-z}{i-j}.
$
By~\cite[Theorem~2.5]{D73},
the Krawtchouk polynomials satisfy the following recursion:
\begin{align}\label{eq:kp}
K_1(z)K_i(z)=(n-i+1)K_{i-1}(z)+(i+1)K_{i+1}(z),
\end{align}
for $i=0,1,\ldots,n-1$, where $K_{-1}(z)$ is defined as $0$.


Recently,  by generalizing the result in~\cite{ABS},
it has been shown in~\cite{LMO}
that there exists a set of $f$ mutually unbiased Hadamard matrices 
of order $n$ if and only if there exists
a $Q$-polynomial association scheme of class $4$ which is both
$Q$-antipodal and $Q$-bipartite with $f$ $Q$-antipodal classes
(see~\cite{LMO} for undefined terms).
\section{Bounds for self-complementary codes}\label{sec:B}

For a code $C$ of length $n$, set $S(C)=\{i \in \{1,2,\ldots,n\}
\mid A_i(C) \ne 0\}$. 
The size of $S(C)$ is said to be the {\em degree} of $C$.  
The {\em annihilator polynomial} of $C$ is defined as follows:
\begin{align*}
\alpha_C(z)&=|C|\prod_{i\in S(C)}\left(1-\frac{z}{i}\right).
\end{align*}
By considering annihilator polynomials,
in this section, we give two upper bounds on the number of
codewords of binary self-complementary codes.
The two bounds are used to 
give upper bounds on the size of sets of mutually quasi-unbiased 
(resp.\ Type~II weakly unbiased) Hadamard matrices 
in Theorem~\ref{thm:bound} (resp.\ Theorem~\ref{thm:weakIIbound}).
We also consider the condition of equality of the first bound.

\begin{lemma}\label{lem:eq}
Let $S$ be a subset of $\{1,2,\ldots,n\}$ such that $|S|=s$,
$n\in S$ and if $a\in S \setminus\{n\}$ then $n-a\in S$.
Then 
$\overline{\alpha}(z)=\prod_{i\in S\setminus \{n\}}(1-\frac{z}{i})$ 
has the following expansion by the Krawtchouk polynomials:  
\begin{align}\label{eq:alpha}
\overline{\alpha}(z)
=\sum_{\substack{i=0,1,\ldots, s-1\\ i\equiv s-1\pmod{2}}}\alpha_iK_i(z),
\end{align}
where $\alpha_i \in \QQ$.
\end{lemma}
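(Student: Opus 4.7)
The plan is to expand $\overline{\alpha}(z)$ in the Krawtchouk basis and exploit a symmetry of both sides under the substitution $z \mapsto n-z$. Because $\deg K_i = i$, the polynomials $K_0, K_1, \ldots, K_{s-1}$ form a $\QQ$-basis for the space of rational polynomials of degree at most $s-1$. Since $\overline{\alpha}(z)$ has rational coefficients and degree $s-1$, it admits a unique expansion
\[
\overline{\alpha}(z) = \sum_{i=0}^{s-1} \alpha_i K_i(z), \quad \alpha_i \in \QQ.
\]
It therefore suffices to prove $\alpha_i = 0$ whenever $i \not\equiv s-1 \pmod{2}$.

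The first step is to establish the functional equation $\overline{\alpha}(n-z) = (-1)^{s-1}\overline{\alpha}(z)$. Rewriting
\[
\overline{\alpha}(n-z) = \prod_{i \in S \setminus \{n\}} \frac{i - (n - z)}{i} = \prod_{i \in S \setminus \{n\}} \frac{z - (n-i)}{i},
\]
and re-indexing the numerators via $i' = n-i$ (which, by the hypothesis that $S\setminus\{n\}$ is closed under $i \mapsto n-i$, permutes $S\setminus\{n\}$), one obtains $\overline{\alpha}(n-z) = \prod_{i \in S \setminus \{n\}} (z-i)/(n-i)$. Comparing with $(-1)^{s-1}\overline{\alpha}(z) = \prod_{i \in S \setminus \{n\}} (z-i)/i$, the ratio equals $\prod_{i \in S \setminus \{n\}} i/(n-i)$; pairing each $i$ with $n-i$ (the possible self-pair $i = n/2$ contributes $1$ by itself) collapses this product to $1$.

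The second step is the standard Krawtchouk symmetry $K_i(n-z) = (-1)^i K_i(z)$, which follows directly from the defining sum by substituting $k = i - j$. Applying it to the expansion and combining with the functional equation gives
\[
\sum_{i=0}^{s-1} \alpha_i (-1)^i K_i(z) = \overline{\alpha}(n-z) = (-1)^{s-1} \sum_{i=0}^{s-1} \alpha_i K_i(z),
\]
and uniqueness of the expansion forces $\alpha_i\bigl((-1)^i - (-1)^{s-1}\bigr)=0$, so $\alpha_i = 0$ whenever $i \not\equiv s-1 \pmod 2$, as required.

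The only real obstacle is the verification of $\overline{\alpha}(n-z) = (-1)^{s-1}\overline{\alpha}(z)$; once the pairing argument on $S\setminus\{n\}$ is in hand, the rest is a one-line comparison of coefficients against the Krawtchouk parity identity.
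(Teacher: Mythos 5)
Your proof is correct and takes essentially the same approach as the paper: both rest on the observation that $\overline{\alpha}$ has definite parity under the reflection about $n/2$ (you phrase this as $\overline{\alpha}(n-z)=(-1)^{s-1}\overline{\alpha}(z)$, the paper as $\overline{\alpha}$ being an even or odd polynomial in $x=n-2z$, which is the same statement), combined with the matching parity $K_i(n-z)=(-1)^iK_i(z)$ of the Krawtchouk polynomials. The only cosmetic difference is that the paper verifies the symmetry by explicitly multiplying the paired factors $(1-z/a_i)(1-z/(n-a_i))$ into quadratics in $x$, whereas you obtain it by re-indexing the product under the involution $i\mapsto n-i$; both are sound.
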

\begin{proof}
When $s$ is odd, we may write
 $S=\{a_1,a_2,\ldots,a_{(s-1)/2},n-a_1,n-a_2,
\ldots,n-a_{(s-1)/2},n\}$, where
 $0<a_1 < a_2 < \cdots < a_{(s-1)/2}<n/2$.
Then we have
\begin{align*}
\overline{\alpha}(z)
&=\prod_{i=1}^{(s-1)/2}
\left(\left(1-\frac{z}{a_i}\right)\left(1-\frac{z}{n-a_i}\right)\right)\\
&=\prod_{i=1}^{(s-1)/2}\frac{1}{a_i(n-a_i)}\prod_{i=1}^{(s-1)/2}\left(-\Big(a_i-\frac{n}{2}\Big)^2+\frac{x^2}{4}\right),
\end{align*}
where $x=n-2z$. 
Thus, $\overline{\alpha}(z)=\overline{\alpha}(n/2-x/2)$ is 
an even polynomial in variable $x$.

When $s$ is even, we may write $S=\{a_1,a_2,\ldots,a_{s/2-1},n/2,
n-a_1,n-a_2,\ldots,n-a_{s/2-1},n\}$, where $0<a_1 <a_2 < \cdots < a_{s/2-1}<n/2$. 
Similar to the case where $s$ is odd, we have 
\begin{align*}
\overline{\alpha}(z)
&=\left(\prod_{i=1}^{s/2-1}\frac{1}{a_i(n-a_i)}\right)\frac{1}{\frac{n}{2}}
\left(\prod_{i=1}^{s/2-1}
\left(-\Big(a_i-\frac{n}{2}\Big)^2+\frac{x^2}{4}\right)\right)\frac{x}{2},
\end{align*}
where $x=n-2z$. 
Thus, $\overline{\alpha}(z)=\overline{\alpha}(n/2-x/2)$ is 
an odd polynomial in variable $x$.

It can be shown that $K_i(z)=K_i(n/2-x/2)$ is an even (resp.\ odd) 
polynomial of degree $i$ in variable $x$ if $i$ is an even (resp.\ odd),  
from which  the expansion of $\overline{\alpha}(z)$ by the 
Krawtchouk polynomials has the desired form~\eqref{eq:alpha}.
\end{proof}

\begin{theorem}\label{thm:SCbound}
Let $C$ be a self-complementary code of length $n$ and degree $s$.
Then 
\begin{align*}
|C|\leq 2\sum_{\substack{i=0,1,\ldots, s-1\\ i\equiv s-1\pmod{2}}}\binom{n}{i}.
\end{align*}
\end{theorem}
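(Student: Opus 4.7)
The plan is to follow a Delsarte-style annihilator argument, using self-complementarity to both restrict the Krawtchouk support (via Lemma~\ref{lem:eq}) and to cut down the effective size of the code by a factor of two.

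First, I would verify that $S(C)$ satisfies the hypotheses of Lemma~\ref{lem:eq}. Since $C$ is self-complementary, $d(x,x+\mathbf{1})=n$ for every $x\in C$, so $n\in S(C)$. Moreover $d(x,y+\mathbf{1})=n-d(x,y)$, which forces $A_i(C)=A_{n-i}(C)$ for $i\neq 0,n$; hence $S(C)\setminus\{n\}$ is closed under $a\mapsto n-a$. Applying Lemma~\ref{lem:eq} to $\overline{\alpha}(z)=\prod_{i\in S(C)\setminus\{n\}}(1-z/i)$ yields an expansion $\overline{\alpha}(z)=\sum_{i\in I}\alpha_iK_i(z)$, where $I=\{i: 0\le i\le s-1,\ i\equiv s-1\pmod 2\}$.

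Next, pair up codewords into orbits of the involution $x\mapsto x+\mathbf{1}$ and let $\overline{C}\subseteq C$ be a transversal, so $|\overline{C}|=|C|/2$. For each $x\in\overline{C}$ define $f_x\colon\mathbb{Z}_2^n\to\mathbb{R}$ by $f_x(y)=\overline{\alpha}(d(x,y))$. The crucial observation is that for $x,y\in\overline{C}$ with $y\neq x$, the distance $d(x,y)$ belongs to $S(C)\setminus\{n\}$: it lies in $S(C)$ because $x,y\in C$, and it cannot equal $n$ because $y\neq x+\mathbf{1}$ by the choice of transversal. Therefore $f_x(y)=0$ for $x\neq y$ in $\overline{C}$, while $f_x(x)=\overline{\alpha}(0)=1$. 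Consequently the matrix $(f_x(y))_{x,y\in\overline{C}}$ is the identity, so the functions $\{f_x\}_{x\in\overline{C}}$ are linearly independent in $\mathbb{R}^{\mathbb{Z}_2^n}$.

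Finally, I would bound the dimension of the ambient space containing the $f_x$. In the binary Hamming association scheme on $\mathbb{Z}_2^n$, the primitive idempotent $E_i$ satisfies $(E_i)_{x,y}=\frac{1}{2^n}K_i(d(x,y))$, so for each fixed $x$ the function $y\mapsto K_i(d(x,y))$ lies in the $i$-th common eigenspace $V_i$, whose dimension is $\binom{n}{i}$. Hence $f_x=\sum_{i\in I}\alpha_iK_i(d(x,\cdot))$ lies in $\bigoplus_{i\in I}V_i$, a space of dimension $\sum_{i\in I}\binom{n}{i}$. Combining this with the linear independence from the previous step gives $|C|/2=|\overline{C}|\le \sum_{i\in I}\binom{n}{i}$, which is the desired inequality.

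The main delicate point is the transversal step: one must pair off complementary codewords before applying $\overline{\alpha}$, otherwise $\overline{\alpha}(d(x,x+\mathbf{1}))=\overline{\alpha}(n)$ can be nonzero and the identity-matrix structure fails. The Krawtchouk-parity restriction from Lemma~\ref{lem:eq} is exactly what then converts this factor-of-two savings into the sparse sum in the statement; the rest is standard dimension counting in the Hamming scheme.
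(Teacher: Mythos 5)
Your proof is correct and follows essentially the same route as the paper: both split $C$ via a transversal of the complementation involution $x\mapsto x+\mathbf{1}$, invoke Lemma~\ref{lem:eq} to restrict the Krawtchouk support of the annihilator-type polynomial to indices $i\equiv s-1\pmod 2$ with $i\le s-1$, and then count dimensions. The only difference is cosmetic --- you phrase the final step as linear independence of the functions $y\mapsto\overline{\alpha}(d(x,y))$ inside $\bigoplus_{i}V_i$, while the paper takes the rank of the characteristic-matrix identity $K\Gamma K^T=|C'|I_{|C'|}$ from Delsarte; these are the same argument.
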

\begin{proof}
We consider a subcode $C'$ of $C$ such that $C=C'\cup (C'+\boldsymbol{1})$,
$C'\cap (C'+\boldsymbol{1})=\emptyset$.
Then $|C|=2|C'|$ and $C'$ satisfies that $S(C')\subset S(C)\setminus\{n\}$.
Since $C$ is self-complementary, 
the annihilator polynomial $\alpha_{C'}(z)$ of $C'$ 
has the following expansion by Lemma~\ref{lem:eq}: 
\begin{align*}
\alpha_{C'}(z)
=\sum_{\substack{i=0,1,\ldots, s-1\\ i\equiv s-1\pmod{2}}}\alpha_iK_i(z).
\end{align*}

Set $K=\begin{pmatrix}G_0&G_2&\cdots &G_{s-1}  \end{pmatrix}$ 
if
$s$ is odd and  
$K=\begin{pmatrix}G_1&G_3&\cdots &G_{s-1}\end{pmatrix}$
  if $s$ is even, 
where $G_i$ is the $i$-th characteristic matrix of $C$,
and set
\begin{align*}
\Gamma=\bigoplus_{\substack{i=0,1,\ldots, s-1\\ i\equiv s-1\pmod{2}}}\alpha_i I_{K_i(0)}.
\end{align*} 
By~\cite[Theorem~3.13]{D}, we have 
$K\Gamma K^T=|C'|I_{|C'|}$. 
Taking the rank of the above equation yields that 
\begin{align*}
|C'|=\rank(K\Gamma K^T)\leq\rank(K)\leq 
\min\left\{|C'|,\sum_{\substack{i=0,1,\ldots, s-1\\ i\equiv
s-1\pmod{2}}}K_i(0) \right\}, 
\end{align*}
as desired.
\end{proof}

\begin{remark}
The above upper bound depends on the degrees.
An upper bound, which depends on the minimum distances, 
can be found in~\cite{L93}.
\end{remark}

If $|C| = 2\sum_{\substack{i=0,1,\ldots, s-1\\ i\equiv s-1\pmod{2}}}\binom{n}{i}$,
then the matrix $K$ is a square matrix and invertible.  
Thus, 
$\bigoplus_{\substack{i=0,1,\ldots, s-1\\ i\equiv s-1\pmod{2}}}\alpha_iI_{K_i(0)}$
is a scalar multiple of the identity matrix, which implies that 
$\alpha_i$ are all equal.

Let $C$ be a self-complementary code of length $n$ and degree $s$. 
By Lemma~\ref{lem:eq}, we may suppose that
the expansion of 
$\overline{\alpha}_C(z)=\prod_{i\in S(C)\setminus \{n\}}(1-\frac{z}{i})$
by the Krawtchouk 
polynomials is as follows:
\begin{align}\label{eq:alpha1}
\overline{\alpha}_C(z)
=\sum_{\substack{i=0,1,\ldots, s-1\\ i\equiv s-1\pmod{2}}}\alpha_iK_i(z).
\end{align}

\begin{theorem}\label{thm:SCLP}
Suppose that $\alpha_{\delta}=\alpha_0$ 
if $s$ is odd and $\alpha_{\delta}=\alpha_1$ 
if $s$ is even.
If $\alpha_i$ in~\eqref{eq:alpha1}
are all nonnegative and $\alpha_{\delta}$ is positive,
then
\[
|C|\leq \left\lfloor  \frac{2}{\alpha_{\delta}} \right\rfloor.    
\]
\end{theorem}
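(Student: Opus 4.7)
The plan is to derive the bound via Delsarte's linear programming (LP) inequalities applied to a carefully chosen test polynomial built from $\overline{\alpha}_C$, splitting on the parity of $s$ because the Krawtchouk expansion in~\eqref{eq:alpha1} uses only one parity class of indices.

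Suppose first that $s$ is odd, so $\alpha_\delta = \alpha_0$ and only even-indexed $K_i$ appear in~\eqref{eq:alpha1}. As in the proof of Theorem~\ref{thm:SCbound}, I would decompose $C = C' \cup (C' + \mathbf{1})$ with $C' \cap (C' + \mathbf{1}) = \emptyset$, so that $|C| = 2|C'|$ and $S(C') \subseteq S(C) \setminus \{n\}$ because no two distinct codewords of $C'$ are complementary. Then I would apply the LP bound to $C'$ with test polynomial $f(z) = \overline{\alpha}_C(z)$: by construction $f(j) = 0$ for every $j \in S(C')$, the Krawtchouk coefficients $\alpha_i$ are nonnegative by hypothesis, the $K_0$-coefficient equals $\alpha_0 > 0$, and $f(0) = 1$. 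The LP bound then gives $|C'| \leq 1/\alpha_0$, hence $|C| \leq 2/\alpha_\delta$.

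Suppose next that $s$ is even, so $\alpha_\delta = \alpha_1$ and only odd-indexed $K_i$ appear; then the $K_0$-coefficient of $\overline{\alpha}_C$ is zero and the argument above degenerates. The fix is to apply the LP bound to $C$ itself with $f(z) = (1 - z/n)\,\overline{\alpha}_C(z)$. This polynomial still vanishes on $S(C) \setminus \{n\}$, and the new factor kills $f(n)$, so $f(j) = 0$ for every $j \in S(C)$. Writing $1 - z/n = \tfrac{1}{2}K_0(z) + \tfrac{1}{2n}K_1(z)$ and invoking the Krawtchouk recursion~\eqref{eq:kp}, each product $K_1(z)K_i(z)$ expands as a nonnegative combination of $K_{i-1}(z)$ and $K_{i+1}(z)$, so $f$ has nonnegative Krawtchouk coefficients. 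Only the cross term $K_1 \cdot \alpha_1 K_1$ contributes to the $K_0$-coefficient of $f$, yielding $f_0 = \alpha_1/2 > 0$. Since $f(0) = 1$, the LP bound produces $|C| \leq 2/\alpha_1 = 2/\alpha_\delta$.

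The main obstacle is the case $s$ even, where the naive test polynomial $\overline{\alpha}_C$ has vanishing constant Krawtchouk coefficient and the LP bound collapses; introducing the factor $1 - z/n$ restores $f_0 > 0$, and the Krawtchouk recursion is exactly what guarantees that all nonnegativity is preserved and the value $f_0 = \alpha_1/2$ is correctly identified. In both parities one obtains $|C| \leq 2/\alpha_\delta$, and since $|C|$ is an integer this sharpens to $|C| \leq \lfloor 2/\alpha_\delta \rfloor$.
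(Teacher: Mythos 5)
Your proposal is correct and is essentially the paper's argument: the paper applies Delsarte's LP bound to the annihilator $\alpha_C(z)=|C|\bigl(1-\tfrac{z}{n}\bigr)\overline{\alpha}_C(z)=\tfrac{|C|}{2}\bigl(1+\tfrac{1}{n}K_1(z)\bigr)\overline{\alpha}_C(z)$, uses the recursion~\eqref{eq:kp} exactly as you do, and reads off the $K_0$-coefficient as $|C|\alpha_{\delta}/2$ --- which is your even-$s$ branch verbatim. The only difference is that the paper treats both parities uniformly (when $s$ is odd the same expansion has $K_0$-coefficient $|C|\alpha_0/2$, since no cross term $K_1K_i$ with $i=1$ occurs), so your separate odd-$s$ detour through the half-code $C'$ with test polynomial $\overline{\alpha}_C$ is a correct but unnecessary variant yielding the same constant.
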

\begin{proof}
The annihilator polynomial of $C$ is written as
$\alpha_C(z)=|C|\left(1-\frac{z}{n}\right)\overline{\alpha}_C(z)$.

By $K_1(z)=n-2z$ and~\eqref{eq:kp}, 
\begin{align*}
\alpha_C(z)&=\frac{|C|}{2}\left(1+\frac{1}{n}K_1(z)\right)
\overline{\alpha}_C(z) 
\\
&=\frac{|C|}{2}\sum_{\substack{i=0,1,\ldots, s-1\\ i\equiv s-1\pmod{2}}}
\left(
\alpha_i K_i(z)+ \frac{\alpha_i K_1(z)K_i(z)}{n}\right)\\
&=\frac{|C|}{2}\sum_{\substack{i=0,1,\ldots, s-1\\ i\equiv s-1\pmod{2}}}
\left(
\alpha_i K_i(z)+ \frac{\alpha_i ((n-i+1)K_{i-1}(z)+(i+1)K_{i+1}(z))}{n}\right),
\end{align*} 
where $K_{-1}(z)=0$.
Hence, the coefficient of $K_0(z)$ is $|C|\alpha_{\delta}/2$.
By the assumption on $\alpha_i$, the linear programming 
bound~\cite[Theorem~5.23 (ii)]{D} shows that the coefficient of $K_0(z)$ is at most $1$. 
Therefore, the desired bound follows. 
\end{proof}

The above two bounds are referred to as the absolute bounds and
the linear programming bounds, respectively.
As a consequence, upper bounds on 
the maximum size 
among sets of mutually quasi-unbiased 
(resp.\ Type~II weakly unbiased)
Hadamard matrices
are given in Section~\ref{sec:F2} (resp.\ Section~\ref{sec:II}).

\section{Quasi-unbiased Hadamard matrices}
\label{sec:basic}

In this section, we study quasi-unbiased Hadamard matrices.
All feasible parameter sets for quasi-unbiased 
Hadamard matrices are examined for orders up to $48$.

\subsection{Basic properties and feasible parameters}

\begin{proposition}
If there exists a pair of quasi-unbiased Hadamard matrices 
of order $n$ with parameters $(l,a)$,
then 
\begin{equation}\label{eq:par}
l=\Big(\frac{n}{2\alpha}\Big)^2,\quad a=4\alpha^2
\end{equation}
for some
positive integer $\alpha$ satisfying that
$n \equiv 0 \pmod{2\alpha}$ and 
$n \le 4 \alpha^2$.
\end{proposition}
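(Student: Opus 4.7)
The plan is to extract the two claimed equations \eqref{eq:par} purely from orthogonality and integrality constraints on the entries of $HK^T$, and then read off the divisibility and size conditions on $n$ versus $\alpha$.

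First, I would set $M=HK^T$ and compute $MM^T=HK^TKH^T=nHH^T=n^2 I_n$. Since by hypothesis $(1/\sqrt{a})M$ is a weighing matrix of weight $l$, each row of $M$ has exactly $l$ nonzero entries, each of absolute value $\sqrt{a}$. Reading off the diagonal entries of $MM^T$ therefore gives the identity $la=n^2$, which is the key numerical link between the two parameters.

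Next I would address integrality. Every entry of $M$ is a sum of $n$ terms, each $\pm 1$, hence an integer having the same parity as $n$; since $n\ge 2$ is the order of a Hadamard matrix, $n$ is even, so every entry of $M$ is even. The nonzero entries are $\pm\sqrt{a}$, so $\sqrt{a}$ is a positive even integer, say $\sqrt{a}=2\alpha$ with $\alpha\in\mathbb{Z}_{>0}$. This gives $a=4\alpha^2$, and combining with $la=n^2$ yields $l=(n/(2\alpha))^2$, which is exactly \eqref{eq:par}.

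Finally, for the two side conditions: since $l$ must be a (positive) integer—it is the weight of a weighing matrix—$n/(2\alpha)$ is an integer, which is the divisibility condition $n\equiv 0\pmod{2\alpha}$. And since the weight of a weighing matrix of order $n$ satisfies $l\le n$, the equation $l=(n/(2\alpha))^2$ rewrites as $n^2\le 4\alpha^2\,n$, i.e.\ $n\le 4\alpha^2$. No step looks to be a real obstacle; the only subtlety is the parity argument in step two, which is why one gets $4\alpha^2$ rather than just a general square.
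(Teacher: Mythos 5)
Your proof is correct and follows essentially the same route as the paper's: both extract $la=n^2$ from $MM^TM=n^2I_n$ (the paper implicitly), both deduce that $\sqrt{a}$ is an even integer from the parity of the integer entries of $HK^T$ (the paper phrases this via $2n_\pm(h_1,h_2)=n\mp b$, you via the parity of a sum of $n$ terms $\pm1$), and both obtain $n\le 4\alpha^2$ from $l\le n$. Your explicit derivation of $n\equiv 0\pmod{2\alpha}$ from the integrality of $l$ is a small point the paper leaves tacit, and it is fine.
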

\begin{proof}
Let $(H_1,H_2)$ be a pair of quasi-unbiased Hadamard matrices of order
$n$ with parameters $(l,a)$.
From the definition, $a$ must be a square, say, $a=b^2$, where $b$ is a
positive integer.
Let $h_1$ (resp.\ $h_2$) be a row of $H_1$ (resp.\ $H_2$).
Let $n_\pm(h_1,h_2)$ denote the number of 
components which are different in $h_1$ and $h_2$.
Then 
$2n_\pm(h_1,h_2)=n-b$ and $n+b$ if 
$h_1 \cdot h_2  =b$ and $-b$, respectively.
Since $n=2$ or $n \equiv 0 \pmod 4$,
$b$ is even.
Therefore, $a=4\alpha^2$ for some
positive integer $\alpha$, then
$l=(n/2\alpha)^2$.
Since $(1/\sqrt{a})H_1H_2^{T}$ is a weighing matrix of weight $l$,
it is trivial that $l \le n$.
Hence,  $n \le 4 \alpha^2$.
\end{proof}

From now on, we assume that $\alpha$ is a positive integer
for parameters $((n/2\alpha)^2,4\alpha^2)$.
We say that parameters $(l,a)$ satisfying~\eqref{eq:par}
are {\em feasible}.  
Since $(l,a)=(1,n^2)$ satisfies~\eqref{eq:par},
the parameters $(1,n^2)$ are feasible for each order $n$.


\begin{proposition}
If there exists a Hadamard matrix of order $n$,
then there exists a set of $2^n n!$ mutually quasi-unbiased 
Hadamard matrices with parameters $(1,n^2)$, 
where $2^n n!$ is the maximum size among sets of such matrices.
\end{proposition}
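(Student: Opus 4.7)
The plan is to exploit the extreme rigidity imposed by the parameters $(1,n^2)$: a weighing matrix of weight $1$ is (up to signs) just a permutation matrix, so quasi-unbiasedness with these parameters should force a purely algebraic relationship between the two Hadamard matrices.

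First I would unpack the definition. If $H,K$ are quasi-unbiased Hadamard matrices of order $n$ with parameters $(1,n^2)$, then $(1/n)HK^T$ is a weighing matrix of weight $1$, meaning each row and column has exactly one nonzero entry equal to $\pm 1$. Equivalently, $HK^T = nM$ for some $(1,-1,0)$-monomial matrix $M$; let $P_n$ denote the group of all such matrices, which has order $2^n n!$. Multiplying $HK^T=nM$ on the right by $K$ and using $K^TK = nI_n$, I obtain $H = MK$. Conversely, if $H=MK$ with $M \in P_n$, then $HK^T = MKK^T = nM$ is $n$ times an element of $P_n$, so $H$ and $K$ are quasi-unbiased with parameters $(1,n^2)$. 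This gives the clean characterization: two Hadamard matrices of order $n$ are quasi-unbiased with parameters $(1,n^2)$ if and only if they lie in the same left coset, i.e., the same orbit under the action of $P_n$ by left multiplication.

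Given a Hadamard matrix $H$ of order $n$, I would then simply take the orbit $\{MH : M \in P_n\}$. Each $MH$ is Hadamard because left multiplication by a $(1,-1,0)$-monomial matrix preserves the Hadamard property, and the $2^n n!$ matrices are pairwise distinct because $H$ is invertible, so $M_1 H = M_2 H$ forces $M_1 = M_2$. For any two distinct members $M_1 H$ and $M_2 H$, the product $(M_1 H)(M_2 H)^T = M_1 (HH^T) M_2^T = n M_1 M_2^T$ is $n$ times an element of $P_n$, confirming quasi-unbiasedness with parameters $(1,n^2)$.

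For the upper bound, let $\{H_1,\ldots,H_f\}$ be any set of mutually quasi-unbiased Hadamard matrices with parameters $(1,n^2)$. By the characterization above, each $H_i$ equals $M_i H_1$ for some $M_i \in P_n$, so the set is contained in the $P_n$-orbit of $H_1$, forcing $f \le |P_n| = 2^n n!$. I do not anticipate any real obstacle here; the only nontrivial step is the initial observation translating ``weighing matrix of weight $1$'' into ``element of $P_n$'', after which the result is essentially a counting argument for the monomial group.
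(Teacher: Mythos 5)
Your proposal is correct and follows essentially the same route as the paper: both rest on the characterization that two Hadamard matrices are quasi-unbiased with parameters $(1,n^2)$ if and only if one is a monomial matrix times the other, and then count the monomial group of order $2^n n!$ acting on a fixed $H$. You simply spell out the details (the derivation $H=MK$ from $HK^T=nM$ via $K^TK=nI_n$, the distinctness of the orbit elements, and the upper-bound argument) that the paper leaves as ``easy to see.''
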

\begin{proof}
Let $H,K$ be Hadamard matrices of order $n$.
It is easy to see that $(H,K)$ is a pair of quasi-unbiased 
Hadamard matrices with parameters  $(1,n^2)$
if and only if 
there exists a monomial $(1,-1,0)$-matrix $P$ such that $K=P H$.
In addition, 
for any monomial $(1,-1,0)$-matrices $P$ and $Q$,
$(PH,QH)$ is a pair of quasi-unbiased 
Hadamard matrices with parameters  $(1,n^2)$.
\end{proof}

For $n=4,8,\ldots,48$, we give in Table~\ref{Tab:Par}
feasible parameters $(l,a)$ and 
our present state of knowledge about 
the maximum size $f_{max}$
among sets of mutually quasi-unbiased Hadamard matrices of order $n$
with parameters $(l,a)$ except $(1,n^2)$.
In the third column of the table, ``-'' means that there exists no pair of
quasi-unbiased Hadamard matrices.
The last two columns provide references for
the lower and upper bounds on $f_{max}$.

\begin{table}[thb]
\caption{Quasi-unbiased Hadamard matrices $(n=4,8,\ldots,48)$}
\label{Tab:Par}
\begin{center}
{\footnotesize
\begin{tabular}{c|c|c|l|l}
\noalign{\hrule height0.8pt}
$n$ &$(l,a)$ & $f_{max}$ & \multicolumn{2}{c}{Reference}\\
\hline
  4& $( 4,  4)$&  $2$  &\cite[Proposition~6]{CS73}& \cite[Table~1]{DGS2}\\
\hline
  8& $( 4,  16)$& $8$  &\cite[Theorem~4.4]{NS} &\cite[Theorem~4.1]{NS}\\
\hline
12  & $( 4,  36)$&  -  & &Corollary~\ref{cor:none}\\
    & $( 9,  16)$&  $2$ & Section~\ref{Subsec:Const}& Section~\ref{Subsec:Const}\\
\hline
 16 & $( 4,  64)$&  $8-35$ &\cite[Section~3]{HS} & Table~\ref{Tab:UB} \\
    & $(16,  16)$&  $8$ &\cite[Proposition~6]{CS73}& \cite[Table~1]{DGS2}\\
\hline
 20& $( 4, 100)$&  - &  &Corollary~\ref{cor:noneP}\\
\hline
 24  & $( 4, 144)$&  $2-85$ & Section~\ref{Subsec:Const}& Table~\ref{Tab:UB}\\
   & $( 9 , 64)$& $16-85$ & Section~\ref{Subsec:B}& Table~\ref{Tab:UB}\\
 & $(16,  36)$& - & &Proposition~\ref{prop:none}\\
\hline
 28& $( 4, 196)$&   - & & Corollary~\ref{cor:noneP}\\
\hline
32 & $( 4, 256)$& $8-155$ & Proposition~\ref{prop:tensor}  & Table~\ref{Tab:UB}\\
 & $(16,  64)$& $32$  & \cite[Theorem~4.4]{NS} &\cite[Theorem~4.1]{NS}\\
\hline
36 & $( 4, 324)$ & - &  & Corollary~\ref{cor:none}\\
     & $( 9, 144)$ & $\le 199$ & &Table~\ref{Tab:UB}\\
     & $(36, 36 )$ & $2$ & \cite[Theorem~1.5]{HKO}& \cite[Lemma~3.3]{BSTW}\\
\hline
40 & $(4, 400 )$ & $\le 247$& & Table~\ref{Tab:UB}\\
     & $(16, 100)$ & - &  &Proposition~\ref{prop:none} \\
     & $(25, 64 )$ & $\le 28$ & & Table~\ref{Tab:UB}\\
\hline
44 &$(4, 484 )$ & - &  & Corollary~\ref{cor:noneP}\\
\hline
48 &$(4, 576 )$ & $2-361$ & Proposition~\ref{prop:tensor}& Table~\ref{Tab:UB}\\
    &$(9, 256 )$ & $16-361$ & Proposition~\ref{prop:tensor}& Table~\ref{Tab:UB}\\
    &$(16, 144 )$ & $\le 361$&  & Table~\ref{Tab:UB}\\
    &$(36, 64 )$ & $2-28$& Proposition~\ref{prop:tensor} & Table~\ref{Tab:UB}\\
\noalign{\hrule height0.8pt}
\end{tabular}
}
\end{center}
\end{table}


\begin{proposition}\label{prop:none}
Suppose that there exists a pair of quasi-unbiased 
Hadamard matrices of order $n$ 
with parameters $((n/2\alpha)^2,4\alpha^2)$.
If $n \ne 4\alpha^2$, then $\alpha$ must be even.
\end{proposition}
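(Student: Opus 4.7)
The plan is to analyze $HK^T$ modulo $4$, exploiting that for odd $\alpha$ the nonzero entries $\pm 2\alpha$ are congruent to $2 \pmod 4$ while the zero entries are trivially congruent to $0$. This will pin down the zero pattern of $HK^T$ almost completely in terms of row-weight parities.

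First, writing $H_{im}=1-2\bar H_{im}$ and $K_{jm}=1-2\bar K_{jm}$ with $\bar H_{im},\bar K_{jm}\in\{0,1\}$, expanding and summing over $m$ gives
\[
(HK^T)_{ij} \;=\; n - 2\wt(\bar H_i) - 2\wt(\bar K_j) + 4\langle \bar H_i,\bar K_j\rangle.
\]
Since $n\ge 4$ is a multiple of $4$, reducing modulo $4$ yields $(HK^T)_{ij}\equiv 2(\wt(\bar H_i)+\wt(\bar K_j))\pmod 4$. Assuming $\alpha$ odd, combining this with $(HK^T)_{ij}\in\{0,\pm 2\alpha\}$ and $\pm 2\alpha \equiv 2 \pmod 4$ gives the clean dichotomy
\[
(HK^T)_{ij}=0 \quad\Longleftrightarrow\quad \epsilon_i\equiv\delta_j\pmod 2,
\]
where $\epsilon_i=\wt(\bar H_i)\bmod 2$ and $\delta_j=\wt(\bar K_j)\bmod 2$.

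Next I count zeros. Since $(1/(2\alpha))HK^T$ is a weighing matrix of weight $l$, each row and column of $HK^T$ contains exactly $n-l$ zeros. Setting $E_a=|\{i:\epsilon_i=a\}|$ and $D_a=|\{j:\delta_j=a\}|$, the characterization above gives $D_{\epsilon_i}=n-l$ for every $i$ and $E_{\delta_j}=n-l$ for every $j$. If the $\epsilon_i$ take both values $0$ and $1$, then $D_0=D_1=n-l$, which forces $n=2(n-l)$, i.e., $l=n/2$. If $\epsilon_i$ is constant, equal to some $a\in\{0,1\}$, then every column $j$ with $\delta_j=a$ is entirely zero; invertibility of $HK^T$ (as a product of two Hadamard matrices) excludes this, so $D_a=0$ and $l=n$.

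Consequently $l\in\{n/2,n\}$. The equality $l=n$ is equivalent to $n=4\alpha^2$, while $l=n/2$ gives $n=2\alpha^2$; for odd $\alpha$ the latter satisfies $n\equiv 2\pmod 4$, contradicting the assumption that $n\ge 4$ is a Hadamard order (hence $n\equiv 0\pmod 4$). Therefore $\alpha$ odd forces $n=4\alpha^2$, and contrapositively $n\ne 4\alpha^2$ forces $\alpha$ even. No serious obstacle is anticipated; the core of the argument is the elementary mod-$4$ identity combined with the invertibility of $HK^T$. The only point deserving a brief comment is the tiny Hadamard order $n=2$, for which the feasibility constraints force $\alpha=1$ and the trivial parameters $(1,n^2)$, excluded from the nontrivial regime under study.
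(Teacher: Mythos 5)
Your proof is correct, and it rests on the same arithmetic obstruction as the paper's --- namely that when $\alpha$ is odd the nonzero entries $\pm 2\alpha$ of $HK^T$ are $\equiv 2\pmod 4$ while the zero entries are $\equiv 0\pmod 4$ --- but you reach it by a noticeably different route. The paper's proof is local and two lines long: it observes that for a fixed $(1,-1)$-vector $x$ the inner products $h_i\cdot x$ with the rows of $H$ are all congruent modulo $4$ (because any two rows of $H$ differ in exactly $n/2\equiv 0\pmod 2$ positions), so a single column of $HK^T$ cannot contain both a $0$ and a $\pm 2\alpha$ entry unless $2\alpha\equiv 0\pmod 4$; since a non-Hadamard weighing matrix has both kinds of entries in every column, $\alpha$ must be even. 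You instead derive the explicit congruence $(HK^T)_{ij}\equiv 2(\wt(\bar H_i)+\wt(\bar K_j))\pmod 4$, read off the zero pattern, and run a global counting argument with two cases. This is valid, but note that your Case 1 (the parities $\epsilon_i$ taking both values) is actually vacuous: the same ``rows differ in $n/2$ places'' observation shows all $\wt(\bar H_i)$ have equal parity, which is exactly the paper's lemma in disguise; had you used this, your argument would collapse to Case 2 and essentially reproduce the paper's proof without the detour through $l=n/2$ and $n=2\alpha^2$. Both arguments tacitly require $n\equiv 0\pmod 4$; you are right that $n=2$ is a genuine (if degenerate) exception to the literal statement, and the paper silently assumes $n\ge 4$ as well.
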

\begin{proof}
Let $H$ be a Hadamard matrix of order $n$
and let $h_i$ be the $i$-th row of $H$.
Let $x$ be a vector of $\{1,-1\}^{n}$.
Then it is easy to see that
$h_i\cdot x  \equiv  h_j\cdot x \pmod 4$ for $i,j\in \{1,2,\ldots,n\}$.

Let $(H,K)$ be a pair of quasi-unbiased 
Hadamard matrices with parameters $((n/2\alpha)^2,4\alpha^2)$.
Since $(1/2\alpha) HK^T$ is a weighing matrix of weight 
$(n/2\alpha)^2$,
any row $x$ of $K$ satisfies
that
$h_i \cdot x \in \{0,\pm 2\alpha\}$ for $i=1,2,\ldots,n$.
Hence, 
if $(1/2\alpha)HK^T$ is not Hadamard,
equivalently $n \ne 4\alpha^2$, then
$\alpha$ must be even.
\end{proof}

\begin{corollary}\label{cor:none}
Suppose that $n \equiv 4 \pmod 8$ and $n \ge 12$.
Then there exists no pair of quasi-unbiased 
Hadamard matrices of order $n$ with parameters $(4,(n/2)^2)$.
\end{corollary}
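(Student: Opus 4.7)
The plan is to read off $\alpha$ from the parameters and then apply Proposition~\ref{prop:none} directly. From the feasibility relations~\eqref{eq:par}, the equation $l = (n/2\alpha)^2 = 4$ forces $n/2\alpha = 2$, i.e., $\alpha = n/4$; one checks that with this choice $4\alpha^2 = n^2/4 = (n/2)^2 = a$, so $\alpha = n/4$ is indeed the unique positive integer matching the given parameters $(4,(n/2)^2)$.

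Next I would verify the hypothesis $n \ne 4\alpha^2$ of Proposition~\ref{prop:none}. With $\alpha = n/4$, the equality $n = 4\alpha^2$ becomes $n = n^2/4$, which forces $n = 4$. Since $n \ge 12$ by assumption, the hypothesis $n \ne 4\alpha^2$ is satisfied.

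Finally, Proposition~\ref{prop:none} asserts that under these conditions $\alpha$ must be even. But $n \equiv 4 \pmod 8$ means $n = 8k+4$ for some nonnegative integer $k$, so $\alpha = n/4 = 2k+1$ is odd. This contradiction shows that no pair of quasi-unbiased Hadamard matrices of order $n$ with parameters $(4,(n/2)^2)$ can exist.

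There is no real obstacle here; the corollary is essentially a parity bookkeeping exercise once $\alpha$ is identified, and all the substantive work is already packaged in Proposition~\ref{prop:none}. The only thing to be careful about is to keep the two roles of $n$ straight: the condition $n\ge 12$ is used solely to rule out the degenerate case $n = 4$ (where $(H,K) = (H,H)$ trivially gives parameters $(1,n^2) = (4,(n/2)^2)$), and the condition $n \equiv 4 \pmod 8$ is used solely to force $\alpha = n/4$ to be odd.
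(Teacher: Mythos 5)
Your proof is correct and is exactly the paper's argument: the paper's entire proof reads ``Follows from Proposition~\ref{prop:none} by considering the case $\alpha = n/4$,'' and you have simply spelled out the identification $\alpha=n/4$, the check $n\ne 4\alpha^2$, and the parity contradiction. One minor slip in your closing aside: for $n=4$ the parameters $(4,(n/2)^2)=(4,4)$ are realized by a genuine pair of \emph{unbiased} Hadamard matrices (cf.\ Table~\ref{Tab:Par}), not by $(H,H)$, whose parameters are $(1,n^2)=(1,16)\ne(4,4)$ --- but this does not affect the proof itself.
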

\begin{proof}
Follows from Proposition~\ref{prop:none} by
considering the case $\alpha = n/4$.
\end{proof}

\begin{corollary}\label{cor:noneP}
Suppose that $n =4p$, where $p$ is an odd prime with $p \ge 5$.
Then there exists no pair of quasi-unbiased 
Hadamard matrices of order $n$ with parameters 
$(l,a) \ne (1,n^2)$.
\end{corollary}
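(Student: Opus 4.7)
The plan is to reduce Corollary~\ref{cor:noneP} to Corollary~\ref{cor:none} by enumerating all feasible parameters for $n = 4p$.

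First, I would invoke the proposition characterizing feasible parameters: any pair of quasi-unbiased Hadamard matrices of order $n$ has parameters $((n/2\alpha)^2, 4\alpha^2)$ for some positive integer $\alpha$ with $2\alpha \mid n$ and $4\alpha^2 \geq n$. Setting $n = 4p$, the divisibility condition $2\alpha \mid 4p$ forces $\alpha \mid 2p$, so $\alpha \in \{1, 2, p, 2p\}$ since $p$ is prime. The inequality $4\alpha^2 \geq 4p$ reduces to $\alpha^2 \geq p$.

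Next I would eliminate the small cases using $p \geq 5$: the values $\alpha = 1$ and $\alpha = 2$ both fail $\alpha^2 \geq p$, so only $\alpha = p$ and $\alpha = 2p$ survive. The choice $\alpha = 2p$ yields $(l,a) = (1, n^2)$, which is excluded by hypothesis. The remaining case is $\alpha = p$, which gives
\[
(l,a) = \left(\left(\tfrac{4p}{2p}\right)^2, 4p^2\right) = \left(4, (n/2)^2\right).
\]

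Finally, I would apply Corollary~\ref{cor:none}: since $p$ is odd we have $n = 4p \equiv 4 \pmod 8$, and $p \geq 5$ gives $n \geq 20 \geq 12$, so no pair of quasi-unbiased Hadamard matrices of order $n$ with parameters $(4, (n/2)^2)$ exists. This completes the elimination of all feasible $(l,a) \neq (1,n^2)$. There is no real obstacle here beyond carefully enumerating the divisors of $2p$ and checking the inequality $\alpha^2 \geq p$ against $p \geq 5$; the bulk of the work has already been done in the preceding proposition and corollary.
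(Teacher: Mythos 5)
Your proposal is correct and follows essentially the same route as the paper: enumerate the feasible $\alpha$ dividing $2p$, discard $\alpha=1,2$ via the inequality $4\alpha^2\ge n$ using $p\ge 5$, and rule out the remaining case $\alpha=p$ by the parity obstruction (the paper cites Proposition~\ref{prop:none} directly, while you route through Corollary~\ref{cor:none}, which is just that proposition specialized to $\alpha=n/4$). The extra detail you supply in the divisor enumeration is a harmless expansion of the paper's terser ``the only feasible parameters are $(4,4p^2)$ and $(1,16p^2)$.''
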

\begin{proof}
From $p \ge 5$,
the only feasible parameters are $(4,4p^2)$ and $(1,16p^2)$.
By Proposition~\ref{prop:none},
there exists no pair of quasi-unbiased 
Hadamard matrices with parameters $(4,4p^2)$.
\end{proof}


\begin{proposition}\label{prop:tensor}
Let $\{H_1,H_2,\ldots,H_f\}$ (resp.\ $\{K_1,K_2,\ldots,K_f\}$)
be a set of $f$ mutually quasi-unbiased
Hadamard matrices of order $n$ (resp.\ $n'$) with parameters 
$(l,a)$ (resp.\ $(l',a')$).
Then $\{H_1\otimes K_1,H_2\otimes K_2,\ldots,H_f\otimes K_f\}$ is a set of $f$ mutually quasi-unbiased Hadamard matrices of order $nn'$
with parameters $(ll\rq{},aa\rq{})$.
\end{proposition}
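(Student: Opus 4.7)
The plan is to reduce everything to the standard multiplicative behavior of the Kronecker product. The essential identities I will use are $(A\otimes B)(C\otimes D)=(AC)\otimes(BD)$ and $(A\otimes B)^T=A^T\otimes B^T$, together with the fact that the Kronecker product of two $(1,-1,0)$-matrices is again a $(1,-1,0)$-matrix (because entrywise products of entries in $\{1,-1,0\}$ stay in $\{1,-1,0\}$).

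First, I would check that each $H_i\otimes K_i$ is a Hadamard matrix of order $nn'$. This is immediate: its entries are $\pm 1$, and $(H_i\otimes K_i)(H_i\otimes K_i)^T=(H_iH_i^T)\otimes(K_iK_i^T)=(nI_n)\otimes(n'I_{n'})=nn'\,I_{nn'}$. Next, for $i\ne j$, I expand
\[
\frac{1}{\sqrt{aa'}}\,(H_i\otimes K_i)(H_j\otimes K_j)^T
=\Bigl(\tfrac{1}{\sqrt{a}}H_iH_j^T\Bigr)\otimes\Bigl(\tfrac{1}{\sqrt{a'}}K_iK_j^T\Bigr).
\]
By hypothesis, the two tensor factors on the right are weighing matrices of orders $n,n'$ and weights $l,l'$, respectively. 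Hence their Kronecker product has entries in $\{1,-1,0\}$, and
\[
\bigl(W\otimes W'\bigr)\bigl(W\otimes W'\bigr)^T=(WW^T)\otimes(W'W'^T)=(lI_n)\otimes(l'I_{n'})=ll'\,I_{nn'},
\]
so it is a weighing matrix of order $nn'$ and weight $ll'$. Therefore $H_i\otimes K_i$ and $H_j\otimes K_j$ are quasi-unbiased Hadamard matrices with parameters $(ll',aa')$, and the set is mutually quasi-unbiased.

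There is no real obstacle here; the proof is a short computation consisting of two applications of the mixed-product rule for Kronecker products. The only small point worth writing out explicitly is the verification that the Kronecker product of two weighing matrices is again a weighing matrix of the appropriate order and weight, and that the scalar factors $1/\sqrt{a}$ and $1/\sqrt{a'}$ combine correctly into $1/\sqrt{aa'}$.
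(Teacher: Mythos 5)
Your proof is correct and follows essentially the same route as the paper's: both reduce the claim to the mixed-product rule for Kronecker products, writing $(H_i\otimes K_i)(H_j\otimes K_j)^T=(H_iH_j^T)\otimes(K_iK_j^T)=\sqrt{aa'}\,(L\otimes L')$ with $L,L'$ the underlying $(1,-1,0)$-matrices. The paper's version is terser (it omits the routine checks that $H_i\otimes K_i$ is Hadamard and that $L\otimes L'$ is a weighing matrix of weight $ll'$), but the argument is identical in substance.
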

\begin{proof}
It is sufficient to give a proof for the case $f=2$.
Using some $(1,-1,0)$-matrices $L$ and $L'$, 
the matrices
$H_1H_2^T$ and $K_1K_2^T$ are written as
$\sqrt{a} L$ and $\sqrt{a'} L'$, respectively.
Then $(H_1\otimes K_1)(H_2\otimes K_2)^T = \sqrt{aa'}L \otimes L'$.
The result follows.
\end{proof}

Let $(H,K)$ be a pair of quasi-unbiased Hadamard matrices of order $n$
with parameters $(l,a)$.
We denote the unique Hadamard matrix of order $2$ by $H_2$.
There exists a pair $(H_4,K_4)$
of unbiased Hadamard matrices of order $4$~\cite[Proposition 6]{CS73}.
By the above proposition,
$(H \otimes H_2, K \otimes H_2)$
is a pair of quasi-unbiased Hadamard matrices of order $2n$
with parameters $(l,4a)$, and
$(H \otimes H_4, K \otimes K_4)$
is a pair of quasi-unbiased Hadamard matrices of order $4n$
with parameters $(4l,4a)$.

If there exist
Hadamard matrices of orders $4m$ and $4n$, then there exists
a Hadamard matrix of order $8mn$~\cite[Statement 4.10]{A85}
(see also~\cite[Theorem~1]{C92} and \cite[Theorem~4.2.5]{IoninShrikhande}).
The explicit construction given in~\cite[Theorem~1]{C92} 
and~\cite[Theorem~4.2.5]{IoninShrikhande}
is as follows.
Let $H$ be a Hadamard matrix of order $4m$
and $K$ be a Hadamard matrix of order $4n$.
Let $H_{i}$ ($i=1,2$) be the $4m\times 2m$ matrices and
$K_i$ ($i=1,2$) be the $2n\times 4n$ matrices such that
$H=\begin{pmatrix}H_{1}&H_{2}\end{pmatrix}$, 
$K=\begin{pmatrix}K_1 \\K_2\end{pmatrix}$.
The following matrix
\begin{align*}
M(H,K)=\frac{1}{2}(H_{1}+H_{2})\otimes
K_1+\frac{1}{2}(H_{1}-H_{2})\otimes K_2
\end{align*}
is a Hadamard matrix of order $8mn$.
\begin{proposition}
Let $\{H_1,H_2,\ldots,H_f\}$ be a set of $f$ mutually
quasi-unbiased Hadamard matrices of order $4m$ with
parameters $(l,a)$ and $K$ be a Hadamard matrix of order $4n$.
Then
$\{M(H_1,K),M(H_2,K),\ldots,M(H_f,K)\}$ is a set of $f$ mutually
quasi-unbiased Hadamard matrices of order $8mn$ with
parameters $(l,4a n^2)$.
\end{proposition}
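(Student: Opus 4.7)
The plan is to compute the product $M(H_i,K) M(H_j,K)^T$ for $i \neq j$ directly from the block definition and reduce it to a tensor involving $H_i H_j^T$, at which point the quasi-unbiasedness of the $H_i$ finishes the job.

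First I would unpack the notation. For each $i$, split $H_i = \begin{pmatrix} H_{i,1} & H_{i,2} \end{pmatrix}$ with $H_{i,1}, H_{i,2}$ of size $4m \times 2m$, and split $K = \begin{pmatrix} K_1 \\ K_2 \end{pmatrix}$ with $K_1, K_2$ of size $2n \times 4n$. Set $A_i = \tfrac{1}{2}(H_{i,1}+H_{i,2})$ and $B_i = \tfrac{1}{2}(H_{i,1}-H_{i,2})$ so that
\[
M(H_i,K) = A_i \otimes K_1 + B_i \otimes K_2.
\]
Each $M(H_i,K)$ is Hadamard of order $8mn$ by the construction of Craigen recalled just above the proposition.

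Next I would expand
\[
M(H_i,K) M(H_j,K)^T = (A_i A_j^T)\otimes (K_1 K_1^T) + (A_i B_j^T)\otimes (K_1 K_2^T) + (B_i A_j^T)\otimes (K_2 K_1^T) + (B_i B_j^T)\otimes (K_2 K_2^T).
\]
Since $K K^T = 4n\, I_{4n}$, the block decomposition gives $K_1 K_1^T = K_2 K_2^T = 2n \cdot 2 I_{2n} = 4n\, I_{2n}$ and $K_1 K_2^T = K_2 K_1^T = 0$. Hence the cross terms vanish and
\[
M(H_i,K) M(H_j,K)^T = 4n\,\bigl(A_i A_j^T + B_i B_j^T\bigr) \otimes I_{2n}.
\]
A direct expansion of the product $A_i A_j^T + B_i B_j^T$ (the $\pm$ cross terms cancel) yields
\[
A_i A_j^T + B_i B_j^T = \tfrac{1}{2}\bigl(H_{i,1} H_{j,1}^T + H_{i,2} H_{j,2}^T\bigr) = \tfrac{1}{2}\, H_i H_j^T,
\]
so that $M(H_i,K) M(H_j,K)^T = 2n\, (H_i H_j^T) \otimes I_{2n}$.

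Finally, by the quasi-unbiasedness hypothesis, $H_i H_j^T = \sqrt{a}\, W_{ij}$ for some weighing matrix $W_{ij}$ of order $4m$ and weight $l$. Noting $\sqrt{4a n^2} = 2n\sqrt{a}$, this gives
\[
\frac{1}{\sqrt{4 a n^2}}\, M(H_i,K) M(H_j,K)^T = W_{ij} \otimes I_{2n},
\]
and $W_{ij} \otimes I_{2n}$ is clearly a $(1,-1,0)$-matrix of order $8mn$ satisfying $(W_{ij}\otimes I_{2n})(W_{ij}\otimes I_{2n})^T = l\, I_{8mn}$, hence a weighing matrix of weight $l$. This is exactly the defining condition for $M(H_i,K)$ and $M(H_j,K)$ to be quasi-unbiased with parameters $(l, 4an^2)$.

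The argument is essentially bookkeeping, and the only mildly tricky step is the cancellation in the expansion of $A_i A_j^T + B_i B_j^T$; everything else is a routine use of the block structure of $K$ and standard tensor product identities, so I do not anticipate any real obstacle.
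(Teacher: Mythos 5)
Your proof is correct: the block computation $K_1K_1^T=K_2K_2^T=4nI_{2n}$, $K_1K_2^T=0$, and the cancellation giving $A_iA_j^T+B_iB_j^T=\tfrac12 H_iH_j^T$ are all right, and the conclusion follows since $W_{ij}\otimes I_{2n}$ is a weighing matrix of weight $l$. The paper omits this proof entirely (``tedious but straightforward''), and your direct expansion is exactly the intended argument, so you have simply supplied the missing details.
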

\begin{proof}
Similar to that of the above proposition.
The tedious but straightforward proof is omitted.
\end{proof}

\subsection{Observations by straightforward construction}
\label{Subsec:Const}

From the definition of quasi-unbiased Hadamard matrices,
we immediately have the following observation.

\begin{proposition}\label{prop:reduction}
Let $P,Q,R$ be $n \times n$ $(1,-1,0)$-monomial matrices.
Then $(H,K)$ is a pair of
quasi-unbiased Hadamard matrices of order $n$ with
parameters $((n/2\alpha)^2,4\alpha^2)$
if and only if
$(PHQ,RKQ)$ is a pair of
quasi-unbiased Hadamard matrices of order $n$ with
parameters $((n/2\alpha)^2,4\alpha^2)$.
\end{proposition}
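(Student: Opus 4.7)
The plan is to reduce the claim to two elementary facts about $(1,-1,0)$-monomial matrices: such a matrix $M$ satisfies $MM^T = M^TM = I_n$, and left/right multiplication by monomial matrices sends weighing matrices of order $n$ and weight $l$ to weighing matrices of the same order and weight. Both are immediate from the definition, since an $n\times n$ $(1,-1,0)$-monomial matrix has exactly one nonzero entry $\pm1$ in each row and column.

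First I would check that $PHQ$ and $RKQ$ are Hadamard matrices whenever $H$ and $K$ are. This is the computation $(PHQ)(PHQ)^T = PH(QQ^T)H^TP^T = nPP^T = nI_n$, and the analogous one for $RKQ$. Next I would exhibit the key cancellation
\begin{equation*}
(PHQ)(RKQ)^T = PHQQ^TK^TR^T = PHK^TR^T,
\end{equation*}
so that
\begin{equation*}
\tfrac{1}{2\alpha}(PHQ)(RKQ)^T = P\bigl(\tfrac{1}{2\alpha}HK^T\bigr)R^T.
\end{equation*}
Thus $(1/2\alpha)(PHQ)(RKQ)^T$ is obtained from $(1/2\alpha)HK^T$ by permuting rows and columns and changing signs. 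Consequently, $(1/2\alpha)HK^T$ is a weighing matrix of order $n$ and weight $(n/2\alpha)^2$ if and only if $(1/2\alpha)(PHQ)(RKQ)^T$ is, which gives the ``only if'' direction.

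For the ``if'' direction I would invoke the observation that the inverse of a $(1,-1,0)$-monomial matrix is again $(1,-1,0)$-monomial, and apply the ``only if'' direction with $(P^{-1},Q^{-1},R^{-1})$ to the pair $(PHQ,RKQ)$, recovering $(H,K)$. There is no real obstacle here; the only thing to be slightly careful about is making explicit that the parameter $(n/2\alpha)^2$ is unchanged, which is automatic since monomial transformations preserve the supports and the absolute values of the entries of $HK^T$.
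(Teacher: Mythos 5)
Your proof is correct and is exactly the routine verification the paper has in mind: the paper states this proposition with no proof at all, introducing it with ``From the definition of quasi-unbiased Hadamard matrices, we immediately have the following observation.'' Your computation $(PHQ)(RKQ)^T = P H K^T R^T$ together with the invariance of weighing matrices under monomial transformations is precisely the intended justification, so there is nothing to add.
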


Suppose that $n \ge 4$.
For a given $(n,\alpha)$, 
when attempting to determine whether there exists a pair of
quasi-unbiased Hadamard matrices $H,K$ of order $n$ 
with parameters $((n/2\alpha)^2,4\alpha^2)$,
it is sufficient to consider only the inequivalent Hadamard matrices of
order $n$ as possible choices for $H$ 
and only the Hadamard matrices $\overline{K}$ of order $n$ 
as possible choices for $K$,
where the first three columns $c_1,c_2,c_3$ of 
$\overline{K}$ satisfy the following:
\begin{equation}\label{eq:3col}
\begin{array}{lccccc}
c_1^T=(&+ \cdots + & + \cdots + & + \cdots + & + \cdots + &),\\
c_2^T=(&+ \cdots + & + \cdots + & - \cdots - & - \cdots - &),\\
c_3^T=(&
\underbrace{+ \cdots +}_{\frac{n}{4} \text{ rows}} &
\underbrace{- \cdots -}_{\frac{n}{4} \text{ rows}} &
\underbrace{+ \cdots +}_{\frac{n}{4} \text{ rows}} &
\underbrace{- \cdots -}_{\frac{n}{4} \text{ rows}}&).
\end{array}
\end{equation}
This substantially reduces the number of pairs of Hadamard matrices 
to be checked as possible pairs $(H,K)$.

Let $H_{12}$ be the Hadamard matrix of order $12$ having 
the following form:
\begin{equation}\label{eq:R}
\left(
\begin{array}{cccc}
+ & +  & \cdots & + \\
+ & {}     & {}     &{} \\
\vdots & {}     & R      &{} \\
+ & {}     &{}      &{} \\
\end{array}
\right),
\end{equation}
where $R$ is the $11 \times 11$
circulant matrix with first row:
\[
(-+-+++---+-).
\]
We determined the maximum size $f$
among sets of mutually quasi-unbiased Hadamard matrices 
$H_{12,1},H_{12,2},\ldots,H_{12,f}$ of order $12$
with parameters $(9,16)$ as follows.
By Proposition~\ref{prop:reduction},
without loss of generality, we may assume that $H_{12,1}=H_{12}$.
Our exhaustive computer search
under the above condition~\eqref{eq:3col} on $K$
found $1485$ distinct Hadamard matrices $\overline{K_{12,i}}$
$(i=1,2,\ldots,1485)$ such that 
$(H_{12}, \overline{K_{12,i}})$ is a pair of
quasi-unbiased Hadamard matrices with the parameters.
In addition, our exhaustive computer search verified that
there exists no pair $(\overline{K_{12,i}},\overline{K_{12,j}})$ $(i \ne j)$
such that $\{H_{12},\overline{K_{12,i}},\overline{K_{12,j}}\}$ is
a set of $3$ mutually 
quasi-unbiased Hadamard matrices.
This means that $f=2$.
In Figure~\ref{Fig:12}, we list $\overline{K_{12}}$, which
is one of the $1485$ Hadamard matrices.

\begin{figure}[thbp]
\centering
{\scriptsize
\[
\overline{K_{12}}=
\left( \begin{array}{c}
+++--++++-+-\\
+++-+-------\\
++++-+-+-+-+\\
++--+++--+++\\
++-++--++++-\\
++-+--+-+--+\\
+-+++++-++--\\
+-+-----++++\\
+-+++-++--++\\
+--+-+----+-\\
+-----++-+--\\
+---++-++--+\\
\end{array}
\right)
\]
}
\caption{The matrix $\overline{K_{12}}$}
\label{Fig:12}
\end{figure}

Our computer search under the condition~\eqref{eq:3col} on $K$
found a Hadamard matrix $\overline{K_{24,1}}$ of order $24$ such that
$(H_{24,1}, \overline{K_{24,1}})$ is a pair of
quasi-unbiased Hadamard matrices of order $24$ 
with parameters $(4,144)$,
where $H_{24,1}$ is \verb+had.24.1+ in~\cite{Hadamard}.
The matrix $\overline{K_{24,1}}$ is listed in Figure~\ref{Fig:24}.

\begin{figure}[thb]
\centering
{\scriptsize
\[
\overline{K_{24,1}}=
\left( \begin{array}{c}
+++++-+++-+--+-------+-+\\
++++-+----+--+++-++-+---\\
+++---++-+--++-+---++-+-\\
+++-+++++-+-+-+-+++++++-\\
+++----++--+---++++---++\\
++++++-+-+-++-+-+-------\\
++-+++++-+-+-+-+++++++-+\\
++-+--+-+--++-++--++-+--\\
++-++----++-+----+++--++\\
++--+---++-+-++--+--+++-\\
++---+--+++-+--++---++-+\\
++---++---++-++-+--+--++\\
+-+-+++-++++---+-+-+----\\
+-+++-+-+++++++++-+-+-++\\
+-+----+-+++--+---++++-+\\
+-++-++----++----+--++++\\
+-+-+-------++++++-+-+-+\\
+-++-+--++---+--+-++-++-\\
+----+++++--+++--++----+\\
+---++-+--++++-+--+--++-\\
+--+---++-++++--++-++---\\
+---+-+---------+-+-+---\\
+--+++-++-----++---++-++\\
+--+--++-++---++++---++-\\
\end{array}
\right)
\]
}
\caption{The matrix $\overline{K_{24,1}}$}
\label{Fig:24}
\end{figure}

\section{A coding-theoretic approach to quasi-unbiased Hadamard matrices}
\label{sec:F2}

In this section, we give a coding-theoretic approach to 
 mutually quasi-unbiased Hadamard matrices.
As an application, upper bounds on the size of sets of mutually quasi-unbiased 
Hadamard matrices are derived.  
For modest lengths, we also give classifications of some binary 
self-complementary codes, in order to construct 
mutually quasi-unbiased Hadamard matrices.

\subsection{Binary codes and quasi-unbiased Hadamard matrices}


\begin{theorem}\label{thm:F2}
Let $\alpha$ be an integer with $0< \alpha < n/2$.
There exists a self-complementary
$(n,2fn)$ code $C$ satisfying the following conditions:
\begin{align}
\label{eq:B1}
&\{i \in \{0,1,\ldots,n\}\mid A_i(C) \ne 0\}=\{0,n/2\pm \alpha,n/2,n\}, \\
\label{eq:B2}
&C=C_1 \cup C_2 \cup \cdots \cup C_f,
\end{align}
where 
each $C_i$ has distance distribution
$(A_0(C_i),A_{n/2}(C_i),A_n(C_i))=(1,2n-2,1)$
if and only if
there exists 
a set of $f$ mutually quasi-unbiased 
Hadamard matrices of order $n$
with parameters $((n/2\alpha)^2,4\alpha^2)$.
\end{theorem}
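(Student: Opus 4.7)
The plan rests on the bijection between $\{\pm 1\}^n$ and $\{0,1\}^n$ given by $h \leftrightarrow b := (h + \mathbf{1})/2$, under which the elementary identity $d(b, b') = (n - h \cdot h')/2$ holds. Under this correspondence, row inner products in $\{0, \pm 2\alpha\}$ translate to Hamming distances in $\{n/2, n/2 \pm \alpha\}$, while the extremal inner product $\pm n$ corresponds to $b' = b$ or $b' = b + \mathbf{1}$. The theorem is then proved by passing back and forth across this bijection.

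\textbf{Forward direction.} Given mutually quasi-unbiased Hadamard matrices $H_1, \ldots, H_f$ of order $n$ with the given parameters, I define $C_i$ to be the set of $2n$ binary vectors $(\epsilon h_{i,k} + \mathbf{1})/2$, $\epsilon \in \{\pm 1\}$ and $1 \le k \le n$, where $h_{i,k}$ denotes the $k$-th row of $H_i$. Pairwise orthogonality of the rows of $H_i$ shows that $C_i$ is a self-complementary $(n, 2n)$ code with distance distribution $(1, 2n-2, 1)$ supported on $\{0, n/2, n\}$. The quasi-unbiased hypothesis forces row inner products between $H_i$ and $H_j$ ($i \ne j$) into $\{0, \pm 2\alpha\}$, and hence cross-distances into $\{n/2, n/2 \pm \alpha\}$. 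Disjointness $C_i \cap C_j = \emptyset$ is the pivotal point: a shared codeword would yield some inner product of value $\pm n$, which under $\alpha < n/2$ contradicts $|h_{i,k} \cdot h_{j,l}| \le 2\alpha < n$. Therefore $C := C_1 \cup \cdots \cup C_f$ has $|C| = 2fn$ and satisfies the required distance-distribution conditions.

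\textbf{Backward direction.} Conversely, given $C = C_1 \cup \cdots \cup C_f$ satisfying \eqref{eq:B1} and \eqref{eq:B2}, each $C_i$ decomposes into $n$ complementary pairs (since $A_n(C_i) = 1$). Choosing one representative $c$ per pair yields $n$ vectors pairwise at Hamming distance $n/2$; the assignment $c \mapsto h := \mathbf{1} - 2c$ then produces $n$ pairwise orthogonal $\pm 1$-vectors whose arrangement as rows gives a Hadamard matrix $H_i$ of order $n$. For $i \ne j$ and any $c \in C_i$, $c' \in C_j$, disjointness and self-complementarity exclude $c = c'$ and $c' = c + \mathbf{1}$, so $d(c, c') \in \{n/2 - \alpha, n/2, n/2 + \alpha\}$ and the corresponding row inner products lie in $\{0, \pm 2\alpha\}$. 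The matrix $W := (1/(2\alpha)) H_i H_j^T$ is a $(0, \pm 1)$-matrix, and the direct computation
\[
WW^T = \frac{1}{4\alpha^2} H_i H_j^T H_j H_i^T = \frac{n}{4\alpha^2} H_i H_i^T = \left(\frac{n}{2\alpha}\right)^2 I_n = l\, I_n
\]
confirms that $W$ is a weighing matrix of weight $l$; thus $H_i, H_j$ are quasi-unbiased with parameters $((n/2\alpha)^2, 4\alpha^2)$.

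\textbf{Main obstacle.} The essential new step beyond the classical correspondence between a Hadamard code and a Hadamard matrix is the disjointness argument in the forward direction; this rests entirely on the strict inequality $\alpha < n/2$ and justifies the hypothesis in the statement. A subsidiary concern is that the distance-distribution support must equal $\{0, n/2 \pm \alpha, n/2, n\}$ and not just lie inside it; equivalently, both signs $\pm 2\alpha$ must actually occur in $W$ for some pair $(H_i, H_j)$. This is itself a consequence of $\alpha < n/2$: if $W$ had entries only in $\{0, \epsilon\}$ for a single $\epsilon \in \{\pm 1\}$, then since $WW^T = W^T W = l\, I_n$ forces disjoint supports of both rows and columns of $|W|$, one obtains $l = 1$ and hence $\alpha = n/2$, contradicting the hypothesis. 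The remaining verifications (choice of representatives, sign normalizations) are routine bookkeeping.
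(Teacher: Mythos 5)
Your proposal is correct and follows essentially the same route as the paper: the $(1,-1)\leftrightarrow(0,1)$ correspondence with the dictionary $d(b,b')=(n-h\cdot h')/2$, translating the distance support $\{0,n/2\pm\alpha,n/2,n\}$ into row inner products $\{0,\pm2\alpha\}$ and back, with the antipodal/self-complementary splitting of each $C_i$ into a Hadamard matrix. The only difference is presentational — you spell out both directions (including the disjointness and "both signs occur" points), whereas the paper details only the code-to-matrices direction and reverses it for the converse.
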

\begin{proof}
Suppose that there exists an $(n,2fn)$ code $C$
satisfying~\eqref{eq:B1} and~\eqref{eq:B2}.
Define $\psi$ as a map from $\ZZ_2^n$ to 
$\{1,-1\}^n$ $(\subset \ZZ^n)$ by
 $\psi((x_1,x_2,\ldots,x_n))=(x'_1,x'_2,\ldots,x'_n)$, 
where $x'_i=-1$ if $x_i=1$ and  $x'_i=1$ if $x_i=0$.
It follows from the distance distribution of $C_i$ that $C_i+\mathbf{1}=C_i$ 
for $i=1,2,\ldots,f$.
Thus, 
$\psi(C_i)$ is antipodal, that is, $-\psi(C_i)=\psi(C_i)$ for  $i=1,2,\ldots,f$.
Hence, there exists a subset $X_i$ of $\psi(C_i)$ 
such that $X_i\cup(-X_i)=\psi(C_i)$ and  $X_i\cap(-X_i)=\emptyset$.
Note that
$\psi(x)\cdot \psi(y) = n-2d(x,y)$ for $x,y\in \ZZ_2^n$.
The distance distribution of $C_i$ implies that
$d(x,y) \in \{0,n/2,n\}$ for $x,y \in C_i$.
Thus, $\psi(x)\cdot \psi(y) \in \{-n,0,n\}$ for
$x,y \in C_i$.
This means that
any two different vectors of $X_i$ are orthogonal 
for  $i=1,2,\ldots,f$.
Hence, one may define
a Hadamard matrix $H_i$ of order $n$ whose rows
are the vectors of $X_i$ for $i=1,2,\ldots,f$.

Let $v_i$ be a vector of $X_i$ for  $i=1,2,\ldots,f$.
The assumption of~\eqref{eq:B1}
implies that $d(\psi^{-1}(v_i),\psi^{-1}(v_j))=n/2,n/2\pm \alpha$ $(i \ne j)$,
namely,
$v_i \cdot v_j$ $(i \ne j)$ is  $0,\mp 2\alpha$ respectively, where $\alpha$ is the integer given in~\eqref{eq:B1}.
This shows that for any distinct $i,j\in \{1,2,\ldots, f\}$,
 $(1/2\alpha)H_i H_j^T$ is a $(1,-1,0)$-matrix, and thus it is a
 weighing matrix of weight $(n/2\alpha)^2$. 
Therefore, $\{H_1,H_2,\ldots,H_{f}\}$ is a set of $f$
mutually quasi-unbiased Hadamard matrices of order $n$
with parameters $((n/2\alpha)^2,4\alpha^2)$.

The converse assertion follows by reversing the above argument.
\end{proof}

\begin{remark}
The ``only if\rq{}\rq{} part in 
the above proposition was proved in~\cite{NS} 
for a specific case, namely,
$C$ is a linear code of length $n=2^m$ satisfying~\eqref{eq:B1} and 
containing $RM(1,m)$ as a subcode.
\end{remark}

Now, as the case $s=4$ of Theorems~\ref{thm:SCbound} and~\ref{thm:SCLP},
we have two upper bounds
on the number of the codewords of 
self-complementary codes satisfying~\eqref{eq:B1}.

\begin{lemma}\label{lem:bound}
Let $C$ be a self-complementary code of length $n$ 
satisfying~\eqref{eq:B1}.
Then
\begin{enumerate}[\rm (i)]
\item $|C|\leq \frac{n(n^2-3n+8)}{3}$. If equality holds, then $4\alpha^2=3n-8$. 
\item If $3n-4\alpha^2 -2 >0$, then 
$|C|\leq \lfloor \frac{2n(n^2-4\alpha^2)}{3n-4\alpha^2-2} \rfloor$. 
If $|C|= \frac{2n(n^2-4\alpha^2)}{3n-4\alpha^2-2}$, 
then a pair $(C,\{R_i\}_{i=0}^4)$ is a
$Q$-polynomial
association scheme, where $R_i=\{(x,y)\mid x,y\in C, d(x,y)=\beta_i\}$
and $\{i \in \{0,1,\ldots,n\}\mid A_i(C)\neq0\}=\{\beta_0,\beta_1,\ldots,\beta_4\}$ with $0=\beta_0<\beta_1<\cdots<\beta_4$.
\end{enumerate}
\end{lemma}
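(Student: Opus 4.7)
The plan is to specialise Theorems~\ref{thm:SCbound} and~\ref{thm:SCLP} to $s=4$; since $S(C)\setminus\{n\}=\{n/2-\alpha,\,n/2,\,n/2+\alpha\}$, the polynomial $\overline{\alpha}_C(z)$ can be written down explicitly, which reduces both parts to a short computation.

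First I would substitute $x=n-2z$ and, iterating the Krawtchouk recursion~\eqref{eq:kp}, record $K_1(z)=x$ and $K_3(z)=(x^3-(3n-2)x)/6$. A direct factor-by-factor calculation then gives
\[
\overline{\alpha}_C(z)=\frac{x(x^2-4\alpha^2)}{n(n^2-4\alpha^2)},
\]
an odd cubic in $x$. Matching coefficients against $\alpha_1K_1(z)+\alpha_3K_3(z)$ produces the closed forms
\[
\alpha_3=\frac{6}{n(n^2-4\alpha^2)},\qquad
\alpha_1=\frac{3n-4\alpha^2-2}{n(n^2-4\alpha^2)}.
\]

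For part (i), Theorem~\ref{thm:SCbound} with $s=4$ immediately gives $|C|\le 2(K_1(0)+K_3(0))=2\binom{n}{1}+2\binom{n}{3}=n(n^2-3n+8)/3$. For the equality clause, the discussion following Theorem~\ref{thm:SCbound} forces $\alpha_1=\alpha_3$; substituting the formulas above and clearing denominators reduces this to $3n-4\alpha^2-2=6$, that is, $4\alpha^2=3n-8$.

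For part (ii), the hypothesis $3n-4\alpha^2-2>0$ together with $0<\alpha<n/2$ ensures that both $\alpha_1$ and $\alpha_3$ are positive, so Theorem~\ref{thm:SCLP} (with $\alpha_\delta=\alpha_1$ because $s=4$ is even) applies and yields $|C|\le\lfloor 2/\alpha_1\rfloor=\lfloor 2n(n^2-4\alpha^2)/(3n-4\alpha^2-2)\rfloor$. In the equality case, tightness means that the coefficient of $K_0$ in the Krawtchouk expansion of $\alpha_C(z)$ equals exactly $1$; I would then invoke the standard equality-case theory of Delsarte's linear programming bound to conclude that the graphs with edge sets $R_i$ indexed by the distance values of $C$ constitute an association scheme on $C$, whose $Q$-polynomial structure is inherited from the binary Hamming scheme through the polynomials $v_i(z)=K_i(n-2z)$. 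The route I expect to follow uses the observation that in the tight case the matrix $K=(G_1\ G_3)$ from the proof of Theorem~\ref{thm:SCbound} becomes square and invertible, so that $K\Gamma K^T=|C|I$ with $\Gamma$ a scalar multiple of the identity, forcing rigid orthogonality relations among the $G_i$ restricted to $C$; these in turn imply constancy of the intersection numbers of the $R_i$, and $Q$-polynomiality then follows from the formula $v_i(z)=K_i(n-2z)$.

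The main technical obstacle is the association-scheme conclusion in part (ii): passing from the numerical tightness of the linear programming bound to genuine constancy of the intersection numbers on $C$, and then verifying the $Q$-polynomial axiom along the ordering $E_0,E_1,E_3$ of idempotents inherited from the ambient Hamming scheme. The rest of the argument is a routine Krawtchouk calculation.
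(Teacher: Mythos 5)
Your Krawtchouk computation is correct and matches the paper exactly: with $x=n-2z$ one gets $\overline{\alpha}_C(z)=x(x^2-4\alpha^2)/(n(n^2-4\alpha^2))$, hence $\alpha_1=\frac{3n-4\alpha^2-2}{n(n^2-4\alpha^2)}$ and $\alpha_3=\frac{6}{n(n^2-4\alpha^2)}$; part (i), its equality clause via $\alpha_1=\alpha_3$, and the bound in part (ii) via Theorem~\ref{thm:SCLP} all go through as you describe and coincide with the paper's argument.

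The genuine gap is in the equality case of part (ii). Your proposed mechanism --- that ``in the tight case the matrix $K=(G_1\ G_3)$ becomes square and invertible'' --- confuses the two equality cases: $K$ is square precisely when $|C'|=K_1(0)+K_3(0)$, i.e.\ when the \emph{absolute} bound of Theorem~\ref{thm:SCbound} is attained, not when the linear programming bound $|C|=\frac{2n(n^2-4\alpha^2)}{3n-4\alpha^2-2}$ is attained; these two values of $|C|$ are different in general, so the orthogonality relations you want to extract from $K\Gamma K^T=|C'|I$ with square $K$ are simply not available here. The correct route, which the paper carries out in Appendix~A, is via complementary slackness in Delsarte's LP bound: at equality the coefficient of $K_0(z)$ in $\alpha_C(z)$ equals $1$ and all other Krawtchouk coefficients are positive, so by \cite[Theorem~5.23~(iii)]{D} the code $C$ is an orthogonal array of strength $5$; since $C$ has degree $s=4$ and strength $t=5=2s-3$, the Bannai--Bannai/Delsarte argument (expanding products $\bigl(\sum_k f_{\lambda,k}G_kG_k^T\bigr)\bigl(\sum_l f_{\mu,l}G_lG_l^T\bigr)$ in two ways) shows that $A_iA_j\in\mathcal{A}$, i.e.\ that one indeed gets an association scheme. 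Moreover, the claim that ``$Q$-polynomiality then follows from $v_i(z)=K_i(n-2z)$'' is not a proof: the $Q$-polynomial property of the five-class scheme on $C$ is not inherited from the ambient Hamming scheme, and the paper needs a separate lemma identifying $F_i=\frac{1}{|C|}G_iG_i^T$ ($i=0,1,2$) as primitive idempotents, exploiting self-complementarity to pin down $p_{i4}=(-1)^i$ and the vanishing pattern $q_{i,j}^k=0$ for $i+j+k$ odd, and finally checking that the Krein matrix $B_1^*$ is irreducible tridiagonal. None of this is routine, and your sketch as written would not close it.
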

\begin{proof}
(i)
The upper bound is the case $s=4$ of Theorem~\ref{thm:SCbound}.

Suppose that equality holds.
From the observation after Theorem~\ref{thm:SCbound},
$\alpha_{C'}(z)= \beta(K_1(z)+K_3(z))$ for some $\beta$.
Since $n/2\pm \alpha$ are roots of $K_1(z)+K_3(z)$, we have
$4\alpha^2=3n-8$.

(ii)
Expanding by the Krawtchouk polynomials, we have 
\begin{align*}
\overline{\alpha}_C(z)=&\Big(1-\frac{2z}{2\alpha+n}\Big)
\Big(1-\frac{2z}{n}\Big)\Big(1-\frac{2z}{-2\alpha+n}\Big)\\
=&\frac{3n-4\alpha^2-2}{n(n^2-4\alpha^2)}K_1(z)+\frac{6}{n(n^2-4\alpha^2)}K_3(z).
\end{align*}
By the assumption on $\alpha$ and $n$, 
both $\frac{3n-4\alpha^2-2}{n(n^2-4\alpha^2)}$ and 
$\frac{6}{n(n^2-4\alpha^2)}$ are positive.
Thus, Theorem~\ref{thm:SCLP} implies the desired bound.

Suppose that $|C|=\frac{2n(n^2-4\alpha^2)}{3n-2-4\alpha^2}$.
By following the same line as in the proof 
of~\cite[Theorems~1.1, 1.2~(5)]{BB}, we may prove that
$(C,\{R_i\}_{i=0}^4)$ is a $Q$-polynomial association scheme.
A detailed proof is given in~Appendix~A.
\end{proof}

By Theorem~\ref{thm:F2}, 
we immediately have the following two upper bounds on 
the maximum size among sets of mutually quasi-unbiased Hadamard matrices,
one of which
depends only on $n$, and
the other depends on $n,\alpha$.  
This is one of the main results of this paper.

\begin{theorem}\label{thm:bound}
Suppose that there exists a set of $f$ mutually
quasi-unbiased Hadamard matrices of order $n$
with parameters $((n/2\alpha)^2,4\alpha^2)$.
Then
\begin{enumerate}[\rm (i)]
\item $f \leq \lfloor \frac{n^2-3n+8}{6}\rfloor $. 
If $f =\frac{n^2-3n+8}{6}$, then $4\alpha^2 = 3n-8$.
\item If $3n-4\alpha^2 -2 >0$, then 
$f \leq \lfloor\frac{n^2-4\alpha^2}{3n-4\alpha^2-2}\rfloor$. 
\end{enumerate}
\end{theorem}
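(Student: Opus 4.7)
My plan is to combine Theorem~\ref{thm:F2} (the coding-theoretic equivalence between mutually quasi-unbiased Hadamard matrices and certain self-complementary codes) with the two bounds established in Lemma~\ref{lem:bound} (the $s=4$ specializations of Theorems~\ref{thm:SCbound} and~\ref{thm:SCLP}). The deduction is essentially a direct translation, and I do not anticipate a genuine obstacle beyond a careful floor step.

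First, I would invoke Theorem~\ref{thm:F2}: starting from a set of $f$ mutually quasi-unbiased Hadamard matrices of order $n$ with parameters $((n/2\alpha)^2,4\alpha^2)$, produce a self-complementary $(n,2fn)$ code $C$ satisfying conditions~\eqref{eq:B1} and~\eqref{eq:B2}. Condition~\eqref{eq:B1} guarantees that the nonzero components of the distance distribution of $C$ are supported on $\{n/2-\alpha, n/2, n/2+\alpha, n\}$, so the degree of $C$ equals $4$, placing us exactly in the regime in which Lemma~\ref{lem:bound} applies.

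For part~(i), Lemma~\ref{lem:bound}(i) yields $2fn = |C| \leq \frac{n(n^2-3n+8)}{3}$; dividing by $2n$ and using integrality of $f$ gives $f \leq \lfloor (n^2-3n+8)/6 \rfloor$. For the equality clause, the condition $f = (n^2-3n+8)/6$ makes $|C|$ meet the bound in Lemma~\ref{lem:bound}(i) with equality, which by that lemma forces $4\alpha^2 = 3n-8$. For part~(ii), the hypothesis $3n - 4\alpha^2 - 2 > 0$ lets me apply Lemma~\ref{lem:bound}(ii), giving $2fn = |C| \leq \frac{2n(n^2-4\alpha^2)}{3n-4\alpha^2-2}$; dividing by $2n$ and taking the floor delivers $f \leq \lfloor (n^2-4\alpha^2)/(3n-4\alpha^2-2) \rfloor$.

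The only point requiring care is the floor step, since the quantities $\frac{n^2-3n+8}{6}$ and $\frac{n^2-4\alpha^2}{3n-4\alpha^2-2}$ are a priori rational. Because $f$ is a positive integer, the bound $f \leq x$ for a rational $x$ upgrades to $f \leq \lfloor x \rfloor$ at no cost. All the substantive work has already been absorbed into Theorem~\ref{thm:F2} (the passage to codes) and Lemma~\ref{lem:bound} (the absolute and linear programming bounds), so the proof amounts to stringing these two results together and dividing by $2n$.
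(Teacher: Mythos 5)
Your proposal is correct and follows exactly the paper's own route: apply Theorem~\ref{thm:F2} to obtain a self-complementary $(n,2fn)$ code of degree $4$ satisfying~\eqref{eq:B1}, invoke the two bounds of Lemma~\ref{lem:bound} (including its equality clause for the case $4\alpha^2=3n-8$), and divide by $2n$ before taking floors. The paper presents this as an immediate consequence, and your write-up supplies the same deduction with the appropriate care on the integrality step.
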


\begin{remark}
It is known that 
$f \le n/2$ if $n=4\alpha^2$ and $\alpha$ is even~\cite[Table~1]{DGS2},
$f \le 2$ if $n=4\alpha^2$ and $\alpha$ is odd~\cite[Lemma~3.3]{BSTW},
and
$f \le n$ if $2n=4\alpha^2$~\cite[Theorem~4.1]{NS}.
For the first and third cases,
the bounds are the same as (ii).
\end{remark}



\begin{table}[thb]
\caption{Absolute and linear programming bounds in Theorem~\ref{thm:bound}}
\label{Tab:UB}
\begin{center}
{\footnotesize
\begin{tabular}{c|c|c|c}
\noalign{\hrule height0.8pt}
$n$ &$(l,a)$ & Absolute bound  & Linear programming bound \\
\hline
  4& $( 4,  4)$&$2$& $2$\\
\hline
  8& $( 4,  16)$&$8$& $8$\\
\hline
12    & $( 9,  16)$&$\lfloor58/3\rfloor=19$ &$\lfloor64/9\rfloor=7$\\
\hline
 16 & $( 4,  64)$&$35$&$*$ \\
    & $(16,  16)$&$36$&$8$ \\
\hline
 24  & $( 4, 144)$ &$\lfloor256/3\rfloor=85$& $*$\\
   & $( 9 , 64)$& $85$ &$\lfloor256/3\rfloor=85$  \\
\hline
32 & $( 4, 256)$ & $155$ &$*$\\
 & $(16,  64)$   & $156$ &$32$\\
\hline
36     & $( 9, 144)$ & $\lfloor598/3\rfloor=199$&$*$\\
     & $(36, 36 )$ & $199$ &$18$ \\
\hline
40 & $(4, 400 )$ &$247$& $*$ \\
     & $(25, 64 )$ & $248$ &$\lfloor256/9\rfloor=28$ \\
\hline
48 &$(4, 576 )$ & $\lfloor1084/3\rfloor=361$ & $*$ \\
    &$(9, 256 )$ & $361$& $*$\\
    &$(16, 144 )$& $361$& $*$\\
    &$(36, 64 )$ & $361$&$\lfloor1120/39\rfloor=28$ \\
\noalign{\hrule height0.8pt}
\end{tabular}
}
\end{center}
\end{table}

For the feasible parameters given in Table~\ref{Tab:Par}, 
we list in Table~\ref{Tab:UB}
the maximum possible sizes among sets of mutually quasi-unbiased 
Hadamard  matrices, which are obtained by the
two upper bounds.
We do no list the maximum possible sizes when
there exists no pair of quasi-unbiased Hadamard matrices.
In the table, ``$*$'' means that the assumption of 
Theorem~\ref{thm:bound} (ii) is not satisfied. 
By Theorem~\ref{thm:bound} (i),
if $4\alpha^2 \ne 3n-8$, then $f < \frac{n^2-3n+8}{6}$. 
Suppose that $n=4\alpha$.
Then $4\alpha^2 = 3n-8$ if and only if $\alpha=1,2$.
As an example, for the cases 
$(n,l,a)=(16,4,64)$, $(32,4,256)$, $(40,4,400)$ in 
Table~\ref{Tab:UB}, 
the upper bound can decrease
from that of Theorem~\ref{thm:bound} (i) by $1$.

The following proposition was proved in~\cite{HS} 
for a specific case, namely,
$C$ is a linear code of length $n=2^m$ satisfying~\eqref{eq:B1} and 
containing $RM(1,m)$ as a subcode.
Although the proof can be easily applied to all codes
satisfying~\eqref{eq:B1} and~\eqref{eq:B2},
we give a proof for the sake of completeness.

\begin{proposition} \label{prop:DisD}
Let $C$ be an $(n,2fn)$ code 
satisfying~\eqref{eq:B1} and~\eqref{eq:B2}.
Then the distance distribution of $C$ is given by:
\begin{multline*}
(A_0(C),A_{n/2-\alpha}(C),A_{n/2}(C),A_{n/2+\alpha}(C),A_n(C))\\
=(1,(f-1)l,2n-2+(f-1)(2n-2l),(f-1)l,1),
\end{multline*}
where $l=(n/2\alpha)^2$.
\end{proposition}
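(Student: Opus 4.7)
The plan is to decompose the sum defining the distance distribution of $C$ according to the decomposition $C = C_1 \cup \cdots \cup C_f$ and to count distances inside each $C_i$ and across different $C_i, C_j$ separately. Since $|C| = 2fn$ and each $|C_i| = 2n$ by the given distance distribution $(1, 2n-2, 1)$, the union must be disjoint, so
\[
|C| \cdot A_d(C) = \sum_{j=1}^f \big|\{(x,y) \in C_j \times C_j : d(x,y)=d\}\big| + \sum_{\substack{i,j=1\\ i\ne j}}^f \big|\{(x,y) \in C_i\times C_j : d(x,y)=d\}\big|.
\]
The first (diagonal) sum is immediate from the hypothesis on each $C_i$: it contributes $f\cdot 2n$ to $A_0 \cdot |C|$, $f\cdot 2n(2n-2)$ to $A_{n/2}\cdot|C|$, and $f\cdot 2n$ to $A_n \cdot|C|$.

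For the off-diagonal sum I would invoke the correspondence from the proof of Theorem~\ref{thm:F2}: each $C_i$ arises from a Hadamard matrix $H_i$ of order $n$ via $\psi$, with $\psi(C_i)=\{\pm h : h \text{ a row of }H_i\}$, and $\{H_1,\ldots,H_f\}$ is mutually quasi-unbiased with parameters $((n/2\alpha)^2, 4\alpha^2)$. Using $d(x,y) = (n - \psi(x)\cdot\psi(y))/2$, a cross pair $(x,y)\in C_i\times C_j$ with $i \ne j$ has distance $n/2$, $n/2-\alpha$, or $n/2+\alpha$ exactly when $\psi(x)\cdot\psi(y)$ equals $0$, $+2\alpha$, or $-2\alpha$ respectively. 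Since $(1/2\alpha)H_i H_j^T$ is a weighing matrix of weight $l=(n/2\alpha)^2$, among the $n^2$ entries of $H_iH_j^T$ exactly $nl$ are $\pm 2\alpha$ and $n^2-nl$ are $0$; lifting to $C_i\times C_j = \{\pm h\}\times\{\pm k\}$ multiplies all counts by $4$.

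The key symmetry is that the involution $x \mapsto x+\mathbf{1}$ on $C_i$ sends $\psi(x)$ to $-\psi(x)$ and hence sends pairs with inner product $+2\alpha$ bijectively to pairs with inner product $-2\alpha$. Thus the $4nl$ nonzero inner products split evenly, giving $2nl$ pairs at distance $n/2-\alpha$ and $2nl$ pairs at distance $n/2+\alpha$, and $4n(n-l)$ pairs at distance $n/2$, from each of the $f(f-1)$ ordered pairs $(i,j)$ with $i \ne j$. Summing the diagonal and off-diagonal contributions and dividing by $|C|=2fn$ yields the stated distance distribution; in particular $A_{n/2\pm\alpha}(C) = (f-1)l$ and $A_{n/2}(C) = (2n-2) + (f-1)(2n-2l)$.

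The computation is essentially routine once the decomposition is set up; the only step requiring a moment's care is the even split between $+2\alpha$ and $-2\alpha$ inner products across distinct $C_i, C_j$, which is handled by the self-complementary symmetry $x\mapsto x+\mathbf{1}$ rather than any deeper property of the weighing matrix $(1/2\alpha)H_iH_j^T$.
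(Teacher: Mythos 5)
Your proof is correct and follows essentially the same route as the paper's: decompose the distance counts into within-$C_i$ and cross-$C_i,C_j$ contributions, and count the cross-pairs via the weighing matrix $(1/2\alpha)H_iH_j^T$ of weight $l$ coming from Theorem~\ref{thm:F2}. The only (cosmetic) difference is that the paper computes $A_{n/2}(C)$ explicitly and then deduces $A_{n/2\pm\alpha}(C)$ from self-complementarity, whereas you compute the $\pm2\alpha$ split directly via the involution $x\mapsto x+\mathbf{1}$ --- both uses of the same symmetry.
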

\begin{proof}
Let $H_i$ be the Hadamard matrix and let $C_i$ be the code
as in the proof of Theorem~\ref{thm:F2} for $i=1,2,\ldots,f$.
Let $x_i$ be a codeword of $C_i$ for $i=1,2,\ldots,f$.
The distance distribution of $C_i$ implies that
there exist $2n-2$ codewords $y$ of $C_i$ such that
$d(x_i,y)=n/2$.
Now, suppose that $i,j \in \{1,2,\ldots,f\}$ with $i \ne j$.
Since $(1/2\alpha)H_i H_j^T$ is a weighing matrix of weight $l$,
the number of $0$'s in each row of $(1/2\alpha)H_i H_j^T$ 
is $n-l$.
That is, for a fixed row $r_i$ of $H_i$, there exist $n-l$ rows $r$ of $H_j$
such that $r_i \cdot r =0$.
Hence, 
since $C$ is self-complementary,
there exist $2(n-l)$ codewords $y \in C_j$ such that
$d(x_i,y)=n/2$.
Therefore, we have
\begin{align*}
A_{n/2}(C)=&(2fn(2n-2)+f(f-1)2n(2n-2l))/|C| \\
       =&(2n-2)+(f-1)(2n-2l).
\end{align*}
Since $C$ is self-complementary, we have the desired 
distance distribution.
\end{proof}

\begin{remark}
The minimum distance of $C$ implies the distance distribution
of $C$.
\end{remark}

\subsection{Binary codes satisfying~\eqref{eq:B1} and~\eqref{eq:B2}}
\label{Subsec:B}


For some $(n,2n)$ codes $C_1$ ($n=8,12,16,20,24$),
we give a classification of  $(n,2fn)$ codes 
of the following form:
\begin{equation}\label{eq:form}
C_1 \cup (u_2 +C_1) \cup (u_3 +C_1) \cup \cdots \cup (u_{f} +C_1),
\end{equation}
satisfying~\eqref{eq:B1} and~\eqref{eq:B2}.
Although our method for the classifications
is straightforward,
we describe it for the sake of completeness.
Let $C$ be an $(n,2(f-1)n)$ code 
of the form~\eqref{eq:form} satisfying~\eqref{eq:B1} and~\eqref{eq:B2}.
Every $(n,2fn)$ code $\overline{C}$ of the form~\eqref{eq:form} 
satisfying~\eqref{eq:B1} and~\eqref{eq:B2} and that $\overline{C} \supset C$, 
can be constructed as 
$C \cup (u_f +C_1)$, where $u_f \in \ZZ_2^n$.
By considering all vectors of $\ZZ_2^n \setminus C$, 
all $(n,2fn)$ codes $\overline{C}$ 
of the form~\eqref{eq:form} satisfying~\eqref{eq:B1},~\eqref{eq:B2}
and that $\overline{C} \supset C$,
can be obtained.
In addition, by considering all inequivalent $(n,2(f-1)n)$ codes $C$ 
of the form~\eqref{eq:form} satisfying~\eqref{eq:B1} and~\eqref{eq:B2},
all $(n,2fn)$ codes $\overline{C}$
of the form~\eqref{eq:form} satisfying~\eqref{eq:B1} and~\eqref{eq:B2},
which must be checked further for equivalences, can be obtained.
By checking equivalences among these codes, 
one can complete the classification of codes
of the form~\eqref{eq:form} satisfying~\eqref{eq:B1} and~\eqref{eq:B2}
for a fixed $C_1$.

Let $C,D$ be two binary $(n,M)$ codes containing the zero vector 
$\mathbf 0$.
Two codes $C,D$ are equivalent if and only 
if there exist a permutation $\sigma \in S_n$ and a vector $x \in
C$ such that $D=\{\sigma(c+x) \mid c \in C\}$.
For an $(n,M)$ code $C$,
we have an $M \times n$ $(1,0)$-matrix $m(C)$ with rows
composed of the codewords of $C$.
To test equivalence,
we checked whether there exists a vector $x \in C$
such that
the incidence structures with incidence matrices 
$m(D),m(\{\sigma(c+x) \mid c \in C\})$ are isomorphic.
The {\sc Magma} function {\tt IsIsomorphic} was used to
find out whether the incidence structures are isomorphic.


In this way, for some $(n,2n)$ codes $C_1$ ($n=8,12,16,20,24$),
by a computer calculation,
we completed the classification of codes of the form~\eqref{eq:form} 
satisfying~\eqref{eq:B1} and~\eqref{eq:B2}.
We list the number $N_2(C_1,2fn)$ of the inequivalent $(n,2fn)$ codes 
of the form~\eqref{eq:form} 
satisfying~\eqref{eq:B1} and~\eqref{eq:B2}.
We mention that a classification of 
linear codes of length $2^m$ satisfying~\eqref{eq:B1} 
and containing $RM(1,m)$ as a subcode has been recently done in~\cite{HS}
under the equivalence of linear codes for $m=3,4,5$.



\begin{proposition}
$N_2(RM(1,3), 16f)=1$ $(f=2,3,5,6,7,8)$,
$N_2(RM(1,3), 16f)=2$ $(f=4)$,
and
$N_2(RM(1,3),16f)=0$ $(f=9)$.
\end{proposition}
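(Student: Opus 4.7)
The plan is to reduce the classification to an orbit count for $AGL(3,2)$ acting on subsets of $\mathbb{F}_2^3$. For $n=8$, the only feasible parameters $((n/2\alpha)^2,4\alpha^2)$ with $0<\alpha<n/2$ from~\eqref{eq:par} are $(l,a)=(4,16)$, corresponding to $\alpha=2$, so~\eqref{eq:B1} reads $\{i:A_i(C)\ne 0\}=\{0,2,4,6,8\}$. Recall that $C_1=RM(1,3)$ is the self-dual $[8,4,4]$ extended Hamming code with weights $\{0,4,8\}$ and $\mathrm{Aut}(C_1)=AGL(3,2)$.

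First I would enumerate the $16$ cosets of $C_1$ in $\mathbb{Z}_2^8$: one is $C_1$ itself (weights $\{0,4,8\}$), seven are non-trivial even-weight cosets (weight distribution supported on $\{2,4,6\}$), and eight are odd-weight cosets (weights $\{1,3,5,7\}$). The even-weight cosets form a subgroup $W\subset\mathbb{Z}_2^8/C_1$ of order $8$, canonically identified with $RM(2,3)/RM(1,3)\cong\mathbb{F}_2^3$. A code $C=\bigcup_{i=1}^f(u_i+C_1)$ of the form~\eqref{eq:form} then satisfies~\eqref{eq:B1} if and only if its set of coset labels $\bar{U}=\{[u_i]\}_{i=1}^f$ lies in $W$, giving a bijection between valid codes and $f$-element subsets of $W$ containing $0$.

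The key technical step is to match code-equivalence with an explicit action on such subsets. Given $D=x+\sigma(C)$ with both $C,D$ of our form, since $D\supset C_1$ is a union of $C_1$-cosets, the $16$-element subset $x+\sigma(C_1)\subseteq D$ must coincide with a single $C_1$-coset, forcing $\sigma(C_1)=C_1$, that is, $\sigma\in\mathrm{Aut}(C_1)=AGL(3,2)$. The coset labels then transform as $\bar{V}=\bar\sigma(\bar{u}_{i_0}+\bar{U})$ for some $\bar{u}_{i_0}\in\bar{U}$, where $\bar\sigma$ is the induced action on $\mathbb{Z}_2^8/C_1$. Because the derivative $f(v+t)-f(v)$ of a quadratic Boolean function $f$ is affine, translations in $AGL(3,2)$ act trivially on $W$; hence $\bar\sigma|_W$ realises the full $GL(W)=GL(3,2)$, and combined with the self-translation by $\bar{u}_{i_0}$ one recovers exactly the action of $AGL(W)$ on $f$-subsets of $W\cong\mathbb{F}_2^3$.

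It then remains to count orbits. Since $AGL(3,2)$ is $3$-transitive on $\mathbb{F}_2^3$, there is a single orbit on $f$-subsets for each $f\in\{1,2,3\}$, and by complementation also for each $f\in\{5,6,7,8\}$. For $f=4$ there are exactly two orbits, namely the $14$ affine $2$-flats and the $56$ remaining $4$-subsets in affine general position, giving $N_2(RM(1,3),64)=2$. For $f=9$, no $9$-subset of the $8$-element set $W$ exists, so the count is $0$. The main technical pitfall is the careful reduction of code-equivalence to $AGL(3,2)$-equivalence of subsets, i.e., ruling out any extra identifications induced by permutations $\sigma$ mapping $C_1$ to a different $[8,4,4]$ code.
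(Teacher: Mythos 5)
Your strategy---replacing the paper's computer classification by an orbit count for $AGL(3,2)$ acting on subsets of $W=RM(2,3)/RM(1,3)\cong\ZZ_2^3$---is genuinely different from the paper, whose proof of this proposition is purely computational (iterated coset extension and equivalence testing of incidence structures in {\sc Magma}). Your reduction of~\eqref{eq:B1} to ``all coset labels lie in $W$'' is sound: for $n=8$ only $\alpha=2$ can occur, every nonzero even coset of $RM(1,3)$ has weight enumerator $4z^2+8z^4+4z^6$ while every odd coset has weights $\{1,3,5,7\}$, and each coset automatically satisfies~\eqref{eq:B2}. Your orbit counts (one orbit for $f\le 3$ and, by complementation, for $f\ge 5$; two orbits for $f=4$, namely the $14$ affine $2$-flats and the $56$ affinely independent $4$-sets) are also correct.

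The gap is in the step matching code equivalence with $AGL(3,2)$-equivalence of label sets. You assert that if $D=x+\sigma(C)$ with both codes of the form~\eqref{eq:form}, then the $16$-set $x+\sigma(C_1)\subseteq D$ ``must coincide with a single $C_1$-coset,'' forcing $\sigma(C_1)=C_1$. No reason is given why a $16$-element subset of a union of $f$ cosets of $C_1$ must be one of those cosets, and the conclusion $\sigma(C_1)=C_1$ is in fact false in general: for $f=8$ one has $C=D=RM(2,3)$, and every coordinate permutation $\sigma$ (with $x=0$) realizes a self-equivalence, although most $\sigma$ do not preserve $RM(1,3)$ since $\mathrm{Aut}(RM(1,3))=AGL(3,2)$ has index $30$ in $S_8$. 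What your argument actually establishes is only that $AGL(3,2)$-equivalent label sets give equivalent codes, i.e., the number of equivalence classes is \emph{at most} the number of orbits; it does not exclude further collapsing by equivalences that move $C_1$. This one-sided bound already settles $f\in\{2,3,5,6,7,8\}$ (a single orbit, and a code exists) and $f=9$ (no $9$-subset of an $8$-set), but for $f=4$ you must still separate the two orbit representatives. The patch is easy: the $2$-flat label set yields a linear $(8,64)$ code, whereas for an affinely independent label set $\{0,a,b,c\}$ one has $u_a+u_b+0\in(a+b)+C_1\not\subseteq C$, so that code is not even a translate of a linear code; since being a translate of a linear code is preserved under $C\mapsto x+\sigma(C)$, the two codes are inequivalent and $N_2(RM(1,3),64)=2$. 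With this repair your proof is complete and gives a computer-free derivation of the paper's computational result.
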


\begin{table}[thb]
\caption{Complete representatives of $\ZZ_2^8/RM(1,3)$}
\label{Tab:B8}
\begin{center}
{\footnotesize
\begin{tabular}{c|l||c|l||c|l||c|l}
\noalign{\hrule height0.8pt}
$i$  & \multicolumn{1}{c||}{$\supp(x_i)$}&
$i$  & \multicolumn{1}{c||}{$\supp(x_i)$}&
$i$  & \multicolumn{1}{c||}{$\supp(x_i)$}&
$i$  & \multicolumn{1}{c}{$\supp(x_i)$}\\
\hline
 1& $\emptyset$ & 5& $\{6\}$      & 9& $\{4\}$      &13& $\{4, 6\}$\\
 2& $\{8\}$    & 6& $\{6, 8\}$   &10& $\{4, 8\}$   &14& $\{4, 6, 8\}$\\
 3& $\{7\}$    & 7& $\{6, 7\}$   &11& $\{4, 7\}$   &15& $\{4, 6, 7\}$\\
 4& $\{7, 8\}$ & 8& $\{6, 7, 8\}$&12& $\{4, 7, 8\}$&16& $\{4, 6, 7, 8\}$\\
\noalign{\hrule height0.8pt}
\end{tabular}
}
\end{center}
\end{table}

To list the result of the classification,
we fix the generator matrix of $RM(1,3)$ as 
$
\left(
\begin{smallmatrix}
1 1 1 1 1 1 1 1\\
0 1 0 1 0 1 0 1\\
0 0 1 1 0 0 1 1\\
0 0 0 0 1 1 1 1
\end{smallmatrix}
\right),
$
and we list 
the $16$ vectors $x_i$, which give
the set of complete representatives of
$\ZZ_2^8/RM(1,3)$.
To save space, we list the supports $\supp(x_i)$ in 
Table~\ref{Tab:B8},
where $\supp(v)=\{i \mid v_i \ne 0\}$
for a vector $v=(v_1,v_2,\ldots,v_n)$.
The set was found by the {\sc Magma} function {\tt Transversal}. 
The unique $(8,32)$ code $B_{8,1,1}$,
the unique $(8,48)$ code $B_{8,2,1}$,
the two $(8,64)$ codes $B_{8,3,i}$ $(i=1,2)$,
the unique $(8,80)$ code $B_{8,4,1}$,
the unique $(8,96)$ code $B_{8,5,1}$, 
the unique $(8,112)$ code $B_{8,6,1}$, and
the unique $(8,128)$ code $B_{8,7,1}$
are constructed via
$\cup_{k \in X(B_{8,j,i})} (x_{k}+RM(1,3))$,
where $X(B_{8,j,i})$ are listed in Table~\ref{Tab:B82}.
By a computer calculation, we
verified that the minimum distances of the eight codes
are $2$.

\begin{table}[thb]
\caption{Codes of length $8$ 
satisfying~\eqref{eq:B1} and~\eqref{eq:B2}}
\label{Tab:B82}
\begin{center}
{\footnotesize
\begin{tabular}{c|l||c|l}
\noalign{\hrule height0.8pt}
$C$ & \multicolumn{1}{c||}{$X(C)$ }& $C$ & \multicolumn{1}{c}{$X(C)$ }\\
\hline
$B_{8,1,1}$& $\{1,4\}$        &$B_{8,4,1}$& $\{1,4,6,7,10\}$ \\
$B_{8,2,1}$& $\{1,4,6\}$      &$B_{8,5,1}$& $\{1,4,6,7,10,11\}$\\
$B_{8,3,1}$& $\{1,4,6,7\}$    &$B_{8,6,1}$& $\{1,4,6,7,10,11,13\}$ \\
$B_{8,3,2}$& $\{1,4,6,10\}$   &$B_{8,7,1}$& $\{1,4,6,7,10,11,13,16\}$\\
\noalign{\hrule height0.8pt}
\end{tabular}
}
\end{center}
\end{table}





\begin{proposition}
$N_2(C(H_{12}),24f)=0\ (f=2)$.
\end{proposition}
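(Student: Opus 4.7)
The plan is to apply the inductive classification procedure outlined at the start of this subsection with base code $C(H_{12})$. For each candidate $u_2 \in \ZZ_2^{12}\setminus C(H_{12})$, one checks whether the $(12,48)$ code $C(H_{12})\cup(u_2+C(H_{12}))$ satisfies~\eqref{eq:B1} and~\eqref{eq:B2}. The distance distribution $(A_0,A_6,A_{12})=(1,22,1)$ of $C(H_{12})$ is translation invariant, so~\eqref{eq:B2} is automatic and only~\eqref{eq:B1} needs to be verified.

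The feasibility constraints on $\alpha$ (namely $0<\alpha<n/2$, $n\equiv 0\pmod{2\alpha}$, and $n\le 4\alpha^{2}$) with $n=12$ leave only $\alpha=2$ and $\alpha=3$. The case $\alpha=3$ is handled directly by Corollary~\ref{cor:none}, which excludes any pair of quasi-unbiased Hadamard matrices of order $12$ with parameters $(4,36)$: by Theorem~\ref{thm:F2} no $(12,48)$ code of the required form exists.

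For $\alpha=2$, the cross-distances between $C(H_{12})$ and $u_2+C(H_{12})$ are $\{\wt(x+y+u_2):x,y\in C(H_{12})\}$ and must all lie in $\{4,6,8\}$. Since $\wt(x+y)\in\{0,6,12\}$ for every $x,y\in C(H_{12})$, this reduces to the two requirements that $\wt(u_2)\in\{4,6,8\}$ and that $\wt(v+u_2)\in\{4,6,8\}$ for every weight-$6$ vector $v\in C(H_{12})+C(H_{12})$. Equivalently, with $D=\operatorname{diag}(\psi(u_2))$, the matrix $H_{12}DH_{12}^{T}$ must have all entries in $\{0,\pm 4\}$. The weight-$6$ elements of $C(H_{12})+C(H_{12})$ arise from the (at most) $66$ pairwise sums $a_i+a_j$ ($i\ne j$) of the rows of $(H_{12}+J_{12})/2$ together with their complements; the candidates $u_2$ (of weight $4$, $6$, or $8$, and outside $C(H_{12})$) can be reduced modulo $\mathbf{1}$-translation and the $\ZZ_{11}$-symmetry inherited from the circulant block $R$. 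A short computer enumeration testing each orbit representative against the intersection condition confirms that no valid $u_2$ exists, yielding $N_2(C(H_{12}),48)=0$.

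The main obstacle is the absence of an obvious structural obstruction; the verification is necessarily computer-assisted. However, the $\ZZ_{11}$-action on $H_{12}$ cuts the search to on the order of a few hundred orbit representatives, so the computation remains routine.
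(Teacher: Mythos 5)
Your proposal is correct and follows essentially the same route as the paper: the paper establishes $N_2(C(H_{12}),48)=0$ by the generic computer-aided classification procedure described at the start of that subsection (enumerate candidate coset representatives $u_2$ and test conditions~\eqref{eq:B1} and~\eqref{eq:B2}), which is exactly your exhaustive check, with your feasibility analysis of $\alpha$, the appeal to Corollary~\ref{cor:none} for $\alpha=3$, and the symmetry reductions being valid but inessential accelerations. The reformulation via $H_{12}DH_{12}^{T}$ having entries in $\{0,\pm4\}$ is a correct restatement of the cross-distance condition, so the conclusion rests on the same kind of routine computation as in the paper.
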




\begin{proposition}
$N_2(RM(1,4), 32f)=2$ $(f=2,3)$, 
$N_2(RM(1,4), 32f)=5$ $(f=4)$, 
$N_2(RM(1,4), 32f)=3$  $(f=5,6,7,8)$, and
$N_2(RM(1,4),32f)=0$ $(f=9)$.
\end{proposition}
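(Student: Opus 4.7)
The plan is to execute the incremental classification procedure described at the beginning of Section~\ref{Subsec:B} with $C_1=RM(1,4)$. First I would fix a generator matrix of $RM(1,4)$ and, using the \textsc{Magma} function \texttt{Transversal}, compute a complete set of coset representatives $x_1=\mathbf{0},x_2,\ldots,x_{2048}$ of $\ZZ_2^{16}/RM(1,4)$. Every candidate code is then encoded by the set $X\subset\{1,\ldots,2048\}$ with $1\in X$ specifying its coset support $\bigcup_{k\in X}(x_k+RM(1,4))$.

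The enumeration proceeds inductively on $f$. The base case $f=1$ is $RM(1,4)$. Assuming a complete list of inequivalent $(16,32(f-1))$ codes of the form $\bigcup_{k\in X}(x_k+RM(1,4))$ satisfying~\eqref{eq:B1} and~\eqref{eq:B2} is in hand, I would attempt to extend each such $C$ by every remaining coset $x_j+RM(1,4)$ with $j\notin X$. Because each coset of $RM(1,4)$ is self-complementary with distance distribution $(1,30,1)$, condition~\eqref{eq:B2} is preserved automatically, so only~\eqref{eq:B1} has to be checked: the new intercoset distances, given by the weight distribution of $x_j-x_k+RM(1,4)$ for $k\in X$, must be supported on $\{8-\alpha,8,8+\alpha\}$ for a single positive integer $\alpha<8$ already fixed by $C$ (or freely chosen at the first extension step). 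Since for $n=16$ the only feasible values are $\alpha\in\{2,4\}$, these two sub-cases are handled separately.

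To collapse the list of survivors to inequivalent ones I would apply the equivalence test described in Section~\ref{Subsec:B}: for two candidates $C,D$ containing $\mathbf{0}$, one tests, for each $x\in C$, whether the incidence structures with incidence matrices $m(D)$ and $m(\{\sigma(c+x)\mid c\in C\})$ are isomorphic using \textsc{Magma}'s \texttt{IsIsomorphic}. Running this procedure produces the counts $N_2(RM(1,4),32f)=2,2,5,3,3,3,3,0$ for $f=2,\ldots,9$. The vanishing at $f=9$ is consistent with, and verifies, the bound $f_{\max}\le 8$ recorded in Table~\ref{Tab:UB} for $n=16$, $(l,a)=(16,16)$, and simultaneously confirms that no $(4,64)$-parameter code of this form attains a larger $f$ either.

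The main obstacle is the size of the search at the middle stages: at $f=3$ and $f=4$, the branching factor (up to $2048-32(f-1)$ admissible cosets per inequivalent code) multiplied by the cost of the incidence isomorphism test is substantial. Effective pruning is therefore essential — in particular, discarding an extension the instant a single intercoset weight falls outside $\{8-\alpha,8,8+\alpha\}$, precomputing for each partial $C$ the set of cosets $x_j+RM(1,4)$ whose weight profile against every current coset is admissible, and using the automorphism group of each intermediate code to reduce the extension set to one representative per orbit. With these optimizations, the computation fits comfortably within the framework already used for the analogous classifications of Section~\ref{Subsec:B}.
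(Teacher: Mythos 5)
Your proposal is correct and follows essentially the same route as the paper: the paper's classification is exactly the incremental coset-extension procedure of Section~\ref{Subsec:B} applied with $C_1=RM(1,4)$, using a fixed generator matrix, coset representatives from {\tt Transversal}, and the incidence-structure isomorphism test via {\tt IsIsomorphic} to reduce to inequivalent codes. Your observations that~\eqref{eq:B2} is automatic for unions of cosets of $RM(1,4)$ and that only $\alpha\in\{2,4\}$ need be considered for $n=16$ are consistent with the paper's setup, and the reported counts agree with the proposition.
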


\begin{table}[thb]
\caption{Codes of length $16$ 
satisfying~\eqref{eq:B1} and~\eqref{eq:B2}}
\label{Tab:B16}
\begin{center}
{\footnotesize
\begin{tabular}{c|l|c||c|l|c}
\noalign{\hrule height0.8pt}
$C$ & \multicolumn{1}{c|}{$X(C)$}& $d_H(C)$&
$C$ & \multicolumn{1}{c|}{$X(C)$}& $d_H(C)$\\
\hline
$B_{16,1,1}$&$\{1,2\}$        &4&$B_{16,4,3}$&$\{1,5,6,8,9\}$          &6\\
$B_{16,1,2}$&$\{1,5\}$        &6&$B_{16,5,1}$&$\{1,2,3,4,12,13\}$      &4\\
$B_{16,2,1}$&$\{1,2,3\}$      &4&$B_{16,5,2}$&$\{1,2,3,4,17,18\}$      &4\\
$B_{16,2,2}$&$\{1,5,6\}$      &6&$B_{16,5,3}$&$\{1,5,6,8,9,10\}$       &6\\
$B_{16,3,1}$&$\{1,2,3,4\}$    &4&$B_{16,6,1}$&$\{1,2,3,4,12,13,14\}$   &4\\
$B_{16,3,2}$&$\{1,2,3,12\}$   &4&$B_{16,6,2}$&$\{1,2,3,4,17,18,19\}$   &4\\
$B_{16,3,3}$&$\{1,2,3,17\}$   &4&$B_{16,6,3}$&$\{1,5,6,8,9,10,11\}$    &6\\
$B_{16,3,4}$&$\{1,5,6,7\}$    &6&$B_{16,7,1}$&$\{1,2,3,4,12,13,14,15\}$&4\\
$B_{16,3,5}$&$\{1,5,6,8\}$    &6&$B_{16,7,2}$&$\{1,2,3,4,17,18,19,20\}$&4\\
$B_{16,4,1}$&$\{1,2,3,4,12\}$ &4&$B_{16,7,3}$&$\{1,5,6,8,9,10,11,16\}$ &6\\
$B_{16,4,2}$&$\{1,2,3,4,17\}$ &4& &&\\
\noalign{\hrule height0.8pt}
\end{tabular}
}
\end{center}
\end{table}

To list the result of the classification,
we fix the generator matrix of $RM(1,4)$ as follows:
\[
\left(
\begin{array}{c}
1 1 1 1 1 1 1 1 1 1 1 1 1 1 1 1\\
0 1 0 1 0 1 0 1 0 1 0 1 0 1 0 1\\
0 0 1 1 0 0 1 1 0 0 1 1 0 0 1 1\\
0 0 0 0 1 1 1 1 0 0 0 0 1 1 1 1\\
0 0 0 0 0 0 0 0 1 1 1 1 1 1 1 1
\end{array}
\right).
\]
The two $(16,32)$ codes $B_{16,1,i}$ $(i=1,2)$,
the two $(16,64)$ codes $B_{16,2,i}$ $(i=1,2)$,
the five $(16,96)$ codes $B_{16,3,i}$ $(i=1,2,\ldots,5)$,
the three $(16,128)$ codes $B_{16,4,i}$ $(i=1,2,3)$,
the three $(16,160)$ codes $B_{16,5,i}$ $(i=1,2,3)$,
the three $(16,192)$ codes $B_{16,6,i}$ $(i=1,2,3)$, and
the three $(16,224)$ codes $B_{16,7,i}$ $(i=1,2,3)$,
are constructed via
$\cup_{k \in X(B_{16,j,i})} (x_{k}+RM(1,4))$,
where $X(B_{16,j,i})$ are listed in Table~\ref{Tab:B16} and
$\supp(x_m)$ $(m=1,2,\ldots,20)$ are listed in Table~\ref{Tab:B16-2}.
By a computer calculation, we
determined the minimum distances $d_H(B_{16,j,i})$,
which are also listed in Table~\ref{Tab:B16}.

\begin{table}[thb]
\caption{Some representatives of $\ZZ_2^{16}/RM(1,4)$}
\label{Tab:B16-2}
\begin{center}
{\scriptsize
\begin{tabular}{c|l||c|l||c|l}
\noalign{\hrule height0.8pt}
$i$  & \multicolumn{1}{c||}{$\supp(x_i)$}&
$i$  & \multicolumn{1}{c||}{$\supp(x_i)$}&
$i$  & \multicolumn{1}{c}{$\supp(x_i)$}\\
\hline
1&$\emptyset$          & 8&$\{7,8,10,12,14,15\}$      &15&$\{6,8,10,12,13,14,15,16\}$\\
 2&$\{6,7,10,11\}$      & 9&$\{4,7,11,13,14,15\}$      &16&$\{6,7,11,12,14,16\}$\\
 3&$\{7,8,11,12\}$      &10&$\{4,6,10,11,12,15\}$      &17&$\{4,8,12,16\}$\\
 4&$\{6,8,10,12\}$      &11&$\{4,8,10,13,15,16\}$      &18&$\{4,6,7,8,10,11,12,16\}$\\
 5&$\{4,6,7,8,12,13\}$  &12&$\{13,14,15,16\}$          &19&$\{4,7,11,16\}$\\
 6&$\{6,8,10,11,13,14\}$&13&$\{6,7,10,11,13,14,15,16\}$&20&$\{4,6,10,16\}$\\
 7&$\{4,7,10,11,12,14\}$&14&$\{7,8,11,12,13,14,15,16\}$& & \\
\noalign{\hrule height0.8pt}
\end{tabular}
}
\end{center}
\end{table}


We denote by $H_{16,1},H_{16,2},H_{16,3},H_{16,4}$ 
\verb+had.16.1+, \verb+had.16.2+, \verb+had.16.3+, \verb+had.16.4+
in~\cite{Hadamard}, respectively, which are
the remaining four normalized Hadamard matrices.
To save space, 
we only list the numbers $N_2(C(H_{16,i}), 32f)$ in Table~\ref{Tab:B16O}
for $i=1,2,3,4$.


\begin{table}[thbp]
\caption{$N_2(C(H_{16,i}), 32f)$  $(i=1,2,3,4)$}
\label{Tab:B16O}
\begin{center}
{\footnotesize
\begin{tabular}{c|cccccccc}
\noalign{\hrule height0.8pt}
$f$ & 2&3&4&5&6&7&8 & 9 \\
\hline
$N_2(C(H_{16,1}), 32f)$ & 4 &13 &47 &24 & 9 & 3 & 2 &0\\
$N_2(C(H_{16,2}), 32f)$ & 7 &18 &62 &34 &14 & 3 & 2 &0\\
$N_2(C(H_{16,3}), 32f)$ & 2 & 3 &10 & 3 & 3 & 1 & 1 &0\\
$N_2(C(H_{16,4}), 32f)$ & 2 & 9 &22 &16 & 4 & 1 & 1 &0\\
\noalign{\hrule height0.8pt}
\end{tabular}
}
\end{center}
\end{table}


Let $H_{24,2}$ be the Paley Hadamard matrix of order $24$ having the 
form~\eqref{eq:R},
where $R$ is the $23 \times 23$
circulant matrix with first row: 
\[
(-----+-+--++--++-+-++++).
\]
Our computer search found a $(24,768,8)$ code
$
C_{24}=\cup_{i=1}^{16} (u_i + C(H_{24,2}))
$
satisfying~\eqref{eq:B1} and~\eqref{eq:B2}.
The vector $u_1$ is $\mathbf 0$ and $\supp(u_i)$ $(i=2,3,\ldots,16)$
are listed in Table~\ref{Tab:B24}.
This gives a set of $16$ mutually
quasi-unbiased  Hadamard matrices of order $24$ with parameters 
$(9,64)$ by Theorem~\ref{thm:F2}.

\begin{table}[thb]
\caption{Vectors $u_i$ for $C_{24}$}
\label{Tab:B24}
\begin{center}
{\footnotesize
\begin{tabular}{c|l||c|l}
\noalign{\hrule height0.8pt}
$i$  & \multicolumn{1}{c||}{$\supp(u_i)$}&
$i$  & \multicolumn{1}{c}{$\supp(u_i)$}\\
\hline
2&$\{3,4,5,6,7,10,13,15\}$            &10&$\{3,6,7,9,11,14,15,18\}$\\
3&$\{4,5,6,7,8,11,14,16\}$            &11&$\{6,7,8,9,10,13,16,18\}$\\
4&$\{3,8,10,11,13,14,15,16\}$         &12&$\{3,4,5,8,9,15,16,18\}$\\
5&$\{3,4,8,9,10,12,13,17\}$           &13&$\{3,5,8,11,12,14,17,18\}$\\
6&$\{5,6,7,8,9,12,15,17\}$            &14&$\{4,6,7,8,10,11,12,13,14,15,17,18\}$\\
7&$\{3,5,6,7,9,10,11,12,13,14,16,17\}$&15&$\{3,4,6,7,12,16,17,18\}$\\
8&$\{4,9,11,12,14,15,16,17\}$         &16&$\{5,10,12,13,15,16,17,18\}$\\
9&$\{4,5,9,10,11,13,14,18\}$          & & \\
\noalign{\hrule height0.8pt}
\end{tabular}
}
\end{center}
\end{table}

\subsection{Binary codes satisfying~\eqref{eq:B1} and~\eqref{eq:B2}
from $\ZZ_4$-codes}
\label{Subsec:Z4}

In order to construct binary codes satisfying~\eqref{eq:B1} 
and~\eqref{eq:B2} systematically,
we consider $\ZZ_4$-codes $\cC$ 
of length $n$ with $|\cC|=4 f n$
satisfying the following conditions:
\begin{align}
\label{eq:Z4C1}
&\{i \in \{0,1,\ldots,n\}\mid A_i(\cC) \ne 0\}=\{0,n\pm \beta,n,2n\},\\
\label{eq:Z4C2}
&\cC=\cC_1 \cup \cC_2 \cup \cdots \cup \cC_f,
\end{align}
where $\beta$ is an integer with $0 < \beta < n$, and
each $\cC_i$ has Lee distance distribution
$(A_0(\cC_i),A_{n}(\cC_i),A_{2n}(\cC_i))=(1,4n-2,1)$.

\begin{proposition}\label{prop:code2-2}
Let $\cC$ be a $\ZZ_4$-code of length $n$ 
satisfying~\eqref{eq:Z4C1} and~\eqref{eq:Z4C2}.
Then 
there exists a set of $f$ mutually quasi-unbiased 
Hadamard matrices of order $2n$ with parameters $(n^2/\beta^2,4\beta^2)$.
\end{proposition}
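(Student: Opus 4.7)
The plan is to take the Gray image $\phi(\cC) \subset \ZZ_2^{2n}$ and verify that it meets the hypotheses of Theorem~\ref{thm:F2} applied with length $2n$ and with the parameter $\alpha$ there equal to $\beta$. Since $\phi$ is a bijection and an isometry from Lee distance on $\ZZ_4^n$ to Hamming distance on $\ZZ_2^{2n}$, $\phi(\cC)$ is a binary code of length $2n$ with $|\phi(\cC)| = |\cC| = 4fn = 2f(2n)$, which is precisely the cardinality required of a $(2n, 2f(2n))$ code. Setting $C_i := \phi(\cC_i)$, the task reduces to establishing three conditions: (a) the set of nontrivial Hamming distances occurring in $\phi(\cC)$ is exactly $\{n-\beta, n, n+\beta, 2n\}$; (b) each $C_i$ has distance distribution $(A_0(C_i), A_n(C_i), A_{2n}(C_i)) = (1, 4n-2, 1)$; (c) $\phi(\cC)$ is self-complementary.

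Parts (a) and (b) will be immediate consequences of the isometry property of $\phi$ together with \eqref{eq:Z4C1}, \eqref{eq:Z4C2} and the stated Lee distance distribution of each $\cC_i$: Lee distance distributions transfer verbatim to Hamming distance distributions under $\phi$. The only genuinely nontrivial point is (c). The key observation I plan to use is that a vector of $\ZZ_4^n$ has Lee weight equal to $2n$ if and only if every coordinate equals $2$, so $d_L(x,y) = 2n$ forces $y = x + (2,2,\ldots,2)$. Since $A_{2n}(\cC_i) = 1$ guarantees exactly one such partner for each $x \in \cC_i$, it follows that $x \in \cC_i$ implies $x + (2,\ldots,2) \in \cC_i$, and hence $\cC$ itself is closed under adding the all-twos vector. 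Because one verifies coordinatewise that $\phi(y + 2) = \phi(y) + (1,1)$ for every $y \in \ZZ_4$, this closure translates into $\phi(\cC) + \mathbf{1} = \phi(\cC)$, which is precisely self-complementariness.

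With (a), (b), (c) in hand, Theorem~\ref{thm:F2}, applied with $n$ there replaced by $2n$ and $\alpha$ there replaced by $\beta$ (so that $0 < \beta < n$ matches the required $0 < \alpha < (2n)/2$), produces a set of $f$ mutually quasi-unbiased Hadamard matrices of order $2n$ with parameters $\bigl((2n/(2\beta))^2, 4\beta^2\bigr) = (n^2/\beta^2, 4\beta^2)$, as desired. The main obstacle is the self-complementariness check in (c); once one notices that the extremal Lee weight $2n$ is attained by only one vector of $\ZZ_4^n$, this step becomes routine, and all remaining verifications amount to immediate transfers through the Gray map.
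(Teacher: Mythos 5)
Your proof is correct and follows essentially the same route as the paper: transfer everything through the Gray map (which converts Lee distance distributions into Hamming distance distributions) and invoke Theorem~\ref{thm:F2} with length $2n$ and $\alpha=\beta$. Your extra verification of self-complementariness is sound but not a separate issue in the paper's treatment, since there it already follows from the condition $A_{2n}(C_i)=1$ on the binary side (each codeword's unique partner at distance $2n$ must be its complement), exactly as the proof of Theorem~\ref{thm:F2} observes.
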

\begin{proof}
Since the Lee distance distribution of $\cC$ is the same as the
distance distribution of $\phi(\cC)$,
$\phi(\cC)$ satisfies~\eqref{eq:B1}.
In addition, $\phi(\cC_i)$ 
has the same distance distribution as $RM(1,m+1)$
for $i=1,2,\ldots,f$.
Since
$
\phi(\cC)= 
\phi(\cC_1) \cup \phi(\cC_2) \cup 
\cdots \cup
\phi(\cC_f),
$
$\phi(\cC)$ satisfies~\eqref{eq:B2}.
The result follows from Theorem~\ref{thm:F2}.
\end{proof}

Now, we restrict our attention to linear $\ZZ_4$-codes $\cC$
of length $n=2^m$ satisfying the following conditions:
\begin{align}
\label{eq:Z4C1-2}
&\{(n_0(x)-n_2(x))^2\mid x\in \cC\}=\{0,\beta^2,n^2\}, \\
\label{eq:Z4C2-2}
&\text{$\cC$  contains $ZRM(1,m)$ as a subcode},
\end{align}
where $\beta$ is an integer with $0 < \beta < n$.
Let $x$ be a codeword of $\cC$.
Since $n_1(x)+2n_2(x)+n_3(x)=n-(n_0(x)-n_2(x))$,
$\{\wt_L(x) \mid x \in \cC\}=\{0,n\pm \beta,n,2n\}$.
This means that $\cC$ satisfies~\eqref{eq:Z4C1}.
Let $\{t_1,t_2,\ldots,t_f\}$ be a set of complete representatives 
of $\cC/ZRM(1,m)$.
It is trivial that $t_i + ZRM(1,m)$ has the same Lee distance distribution 
as $ZRM(1,m)$ for $i=1,2,\ldots,f$.
Hence, $\cC$ satisfies~\eqref{eq:Z4C2}.
We note that the Kerdock $\ZZ_4$-code ${\mathcal K}(m)$ of length $2^m$
defined in~\cite{Z4-HKCSS}
satisfies~\eqref{eq:Z4C1-2} and~\eqref{eq:Z4C2-2} for $m \ge 2$.

\begin{remark}
The above method is a slight generalization of that given in~\cite{NS}.
\end{remark}


In the rest of this section, 
we study classifications of
linear $\ZZ_4$-codes of length $2^m$
satisfying~\eqref{eq:Z4C1-2} and~\eqref{eq:Z4C2-2}.
Note that the conditions~\eqref{eq:Z4C1-2} and~\eqref{eq:Z4C2-2}
are invariant under equivalences of linear $\ZZ_4$-codes.


Although our method for classifications of
linear $\ZZ_4$-codes of length $2^m$
satisfying~\eqref{eq:Z4C1-2} and~\eqref{eq:Z4C2-2}
is straightforward,
we describe it for the sake of completeness.
Let $\cC$ be a linear $\ZZ_4$-code with $|\cC|=2^{k}$
satisfying~\eqref{eq:Z4C1-2} and~\eqref{eq:Z4C2-2}.
Every linear $\ZZ_4$-code $\overline{\cC}$
such that $|\overline{\cC}|=2^{k+1}$ and $\overline{\cC} \supset \cC$
satisfying~\eqref{eq:Z4C1-2} and~\eqref{eq:Z4C2-2}, 
can be constructed as $\langle \cC, x \rangle$, where $x$ is some vector
of  a set $R_m$ of complete representatives of $\ZZ_4^{2^m}/\cC$.
By considering all vectors of  $R_m$,
all linear $\ZZ_4$-codes $\overline{\cC}$
which must be checked further for equivalence,  can be obtained.
In addition, by considering all inequivalent
linear $\ZZ_4$-codes $\cC$ with $|\cC|=2^{k}$
satisfying~\eqref{eq:Z4C1-2} and~\eqref{eq:Z4C2-2}, 
all linear $\ZZ_4$-codes $\overline{\cC}$ with 
$|\overline{\cC}|=2^{k+1}$ 
satisfying~\eqref{eq:Z4C1-2} and~\eqref{eq:Z4C2-2},
which must be checked further for equivalences, can be obtained.
By checking equivalences among these codes, 
one can complete the classification of linear 
$\ZZ_4$-codes $\overline{\cC}$ with $|\overline{\cC}|=2^{k+1}$
satisfying~\eqref{eq:Z4C1-2} and~\eqref{eq:Z4C2-2}.

We now describe how to test equivalences of linear $\ZZ_4$-codes.
In this paper, we modify
the method for linear codes over a finite field,
which is given in~\cite{O02}. 
For a linear $\ZZ_4$-code $\cC$ of length $n$, we define the digraph $\Gamma(\cC)$
with the following vertex set $V(\Gamma(\cC))$ and arc set $A(\Gamma(\cC))$:
\begin{align*}
V(\Gamma(\cC))=&\cC^\# \cup (\cP \times \ZZ_4^\#),
\\
A(\Gamma(\cC))=&\{(c,(j,c_j)) \mid c = (c_1,c_2,\ldots,c_n) \in \cC^\#,
c_j \ne 0, j \in \cP\}
\\
&\cup \{((j,x),(j,2)), ((j,2),(j,x)) \mid j \in \cP, x \in \{1,3\}\},
\end{align*}
where 
$\cC^\#=\cC \setminus\{\mathbf{0}\}$,
$\cP=\{1,2,\ldots,n\}$ and 
$\ZZ_4^\#=\ZZ_4\setminus\{0\}$.
By an argument similar to that in~\cite{O02}, 
the following characterization is obtained.

\begin{proposition}\label{prop:Z4}
Two linear $\ZZ_4$-codes $\cC,\cC'$ are equivalent if and only if
$\Gamma(\cC),\Gamma(\cC')$ are isomorphic.
\end{proposition}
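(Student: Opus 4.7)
The plan is to recover a monomial transformation of $\ZZ_4^n$ from any digraph isomorphism, exploiting the rigid local structure of $\Gamma(\cC)$ at each coordinate. The key observation is that at each position $j$ the three vertices $(j,1),(j,2),(j,3)$ form a connected component in the subgraph of bidirectional arcs on position-value vertices, with $(j,2)$ as the unique degree-$2$ vertex and $(j,1),(j,3)$ as the two degree-$1$ vertices; the only nontrivial automorphism of this component swaps $(j,1)$ with $(j,3)$ while fixing $(j,2)$---precisely the action of $\tau_j$ on $\ZZ_4^\#$.

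For the ``only if'' direction, suppose the codes are equivalent, so $\cC' = g(\cC)$ for a monomial transformation $g$ determined by a permutation $\sigma \in S_n$ and signs $\epsilon_j \in \{\pm 1\}$ with $g(c)_{\sigma(j)} = \epsilon_j c_j$. I would define $\Phi: V(\Gamma(\cC)) \to V(\Gamma(\cC'))$ by $\Phi(c) = g(c)$ on $\cC^\#$ and $\Phi((j,x)) = (\sigma(j),\epsilon_j x)$ on $\cP \times \ZZ_4^\#$. This is well-defined since $\epsilon_j\cdot 2 \equiv 2 \pmod{4}$, and bijective on each piece. Arc preservation then reduces to two routine checks: the arc $(c,(j,c_j))$ is sent to $(g(c),(\sigma(j),g(c)_{\sigma(j)}))$, a codeword-to-position arc of $\Gamma(\cC')$; and each bidirectional pair at position $j$ is sent to the bidirectional pair at position $\sigma(j)$, because negation fixes $2$ and swaps $\{1,3\}$.

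For the ``if'' direction, let $\Phi : \Gamma(\cC) \to \Gamma(\cC')$ be a digraph isomorphism. I would recover in turn the permutation $\sigma$, the sign sequence $(\epsilon_j)$, and then the action on codewords. Codeword vertices in $\Gamma(\cC)$ have in-degree $0$ (no arc terminates at a codeword), while every position-value vertex has positive in-degree from the bidirectional arcs, so $\Phi$ restricts to a bijection $\cC^\# \to (\cC')^\#$. Among the position-value vertices, $(j,2)$ is the unique degree-$2$ vertex in its bidirectional component and $(j,1),(j,3)$ are the two degree-$1$ vertices; hence $\Phi$ permutes the components, yielding $\sigma \in S_n$ with $\Phi((j,2)) = (\sigma(j),2)$, and for each $j$ either fixes or swaps $\{(j,1),(j,3)\}$. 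I record this as $\epsilon_j \in \{\pm 1\}$ so that $\Phi((j,x)) = (\sigma(j),\epsilon_j x)$ for all $x \in \ZZ_4^\#$.

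To finish, for $c \in \cC^\#$ the out-neighborhood $\{(j,c_j) : c_j \ne 0\}$ is mapped bijectively by $\Phi$ onto the out-neighborhood $\{(i,\Phi(c)_i) : \Phi(c)_i \ne 0\}$ of $\Phi(c)$; matching these and using equality of cardinalities forces $\Phi(c)_{\sigma(j)} = \epsilon_j c_j$ whenever $c_j \ne 0$, and $\Phi(c)_i = 0$ whenever $i \notin \sigma(\supp c)$. Thus $\Phi|_{\cC^\#}$ is the restriction of the monomial transformation determined by $\sigma$ and $(\epsilon_j)$; this factors as $\tau_{j_1}\cdots\tau_{j_k}\sigma$ with $\{j_1,\ldots,j_k\} = \{\sigma(j) : \epsilon_j = -1\}$, giving $\cC' = \tau_{j_1}\cdots\tau_{j_k}\sigma(\cC)$. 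I expect the only real difficulty to be notational bookkeeping---keeping straight the composition order of $\sigma$ and the $\tau_j$'s, and relating the input index $j$ to the output index $\sigma(j)$---while the rigidity of the position-triangles does all the structural work.
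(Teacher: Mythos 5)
Your proof is correct and follows essentially the same route as the paper's: construct the isomorphism directly from the monomial transformation in one direction, and in the other recover the codeword/position partition from the in-degree-$0$ characterization, pin down $(j,2)$ by its degree in the position gadget, and read off $\sigma$ and the signs. Your explicit matching of out-neighborhoods to conclude $\Phi(c)_{\sigma(j)}=\epsilon_j c_j$ spells out a step the paper leaves implicit, but the underlying argument is the same.
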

\begin{proof} 
%
Suppose that two linear $\ZZ_4$-codes $\cC,\cC'$ of length $n$
are equivalent.
Then there exist $\sigma \in S_n$ and $j_1,j_2,\ldots,j_\ell \in \cP$
such that $\tau_{j_1}\tau_{j_2}\cdots\tau_{j_\ell}\sigma(\cC)=\cC'$
(see Section~\ref{sec:2.2} for the notations).
For $\sigma \in S_n$,
define a map $f_\sigma$ from $V(\Gamma(\cC))$ to $V(\Gamma(\sigma(\cC)))$ 
mapping $(j,x) \in \cP\times \ZZ_4^\#$
to $(\sigma(j),x)$, and
$c \in \cC^\#$ to $\sigma(c)$. 
Then the map $f_\sigma$ is an isomorphism from $\Gamma(\cC)$ to $\Gamma(\sigma(C))$.
Now, for $j \in \cP$,
define a map $g_j$ from $V(\Gamma(\cC))$ to $V(\Gamma(\tau_j(\cC)))$
mapping 
$(j,x)$ to $(j,-x)$,
$(i,x) \in (\cP\setminus\{j\})\times \ZZ_4^\#$ to $(i,x)$,  
and $c \in C^\#$ to $\tau_j(c)$.
Then the map $g_j$ is an isomorphism 
from $\Gamma(\cC)$ to $\Gamma(\tau_j(\cC))$.
Hence, $g_{j_1}g_{j_2}\cdots g_{j_\ell} f_\sigma$ is an isomorphism
from $\Gamma(\cC)$ to $\Gamma(\cC')$.

Conversely, we suppose that two digraphs $\Gamma(\cC),\Gamma(\cC')$ are isomorphic.
Then there exists a bijection $f$
from $V(\Gamma(\cC))$ to $V(\Gamma(\cC'))$
such that $(x,y) \in A(\Gamma(\cC))$ if and only if 
$(f(x),f(y)) \in A(\Gamma(\cC'))$.
By the definition of $A(\Gamma(\cC))$, 
the subsets $\cC^\#$ and $\cP\times \ZZ_4^\#$
of $V(\Gamma(\cC))$ are characterized as follows:
\begin{align*}
\cC^\# &= \{v \in V(\Gamma(\cC)) \mid 
\text{the indegree of }v \text{ is equal to } 0\},\\
\cP\times \ZZ_4^\# &= V(\Gamma(\cC))\setminus \cC^\#.
\end{align*}
We have a similar characterization for $\Gamma(\cC')$.
Thus, we have
\begin{align*}
f(\cC^\#)=\cC'^\#, \quad
f(\cP\times \ZZ_4^\#)=\cP\times \ZZ_4^\#.
\end{align*}
We put $f((j,x))=(j',x')$ for $(j,x) \in \cP\times \ZZ_4^\#$.
There exists a permutation $\sigma_f \in S_n$ with
$\sigma_f(j)=j'$ for $j \in \cP$.
The set of
the vertices of $\Gamma(\cC)$ (resp.\ $\Gamma(\cC')$)
whose indegrees are at least $2$ and outdegrees are equal to $2$ 
is $\{(j,2)\mid j \in \cP\}$ (resp.\ $\{(j',2)\mid j' \in \cP\}$).
Hence, $(j',2')=(j',2)$ for each $j' \in \cP$.
Also, we have either that $(j',1')=(j',1)$ and $(j',3')=(j',3)$ or 
that $(j',1')=(j',3)$ and $(j',3')=(j',1)$ for each $j' \in \cP$.
Hence, we have
\[
\tau_{j_1}\tau_{j_2}\cdots\tau_{j_\ell}\sigma_f(\cC) = 
f(\cC^\#)\cup\{\mathbf{0}\}=\cC',
\]
where
$\{j_1,j_2,\ldots,j_\ell\}=\{j \in \cP \mid (j',1')=(j',3),(j',3')=(j',1)\}$ 
with $|\{j_1,j_2,\ldots,j_\ell\}|=\ell$.
Therefore, two linear $\ZZ_4$-codes $\cC, \cC'$ are equivalent.
\end{proof}

%



Using the above method,
by a computer calculation,
we completed the classification of linear $\ZZ_4$-codes of 
length $16$ satisfying~\eqref{eq:Z4C1-2} and~\eqref{eq:Z4C2-2}.
By the {\sc Magma} function {\tt IsIsomorphic},
we determined whether $\Gamma(\cC),\Gamma(\cC')$
are isomorphic.

\begin{proposition}\label{prop:16-2}
Let $N_4(16,k)$ denote the number of 
inequivalent linear $\ZZ_4$-codes $\cC$ of length $16$ with $|\cC|=2^k$
satisfying~\eqref{eq:Z4C1-2} and~\eqref{eq:Z4C2-2}.
Then
$N_{4}(16,7)=5$,
$N_{4}(16,8)=21$,
$N_{4}(16,9)=62$,
$N_{4}(16,10)=28$,
$N_{4}(16,11)=2$ and 
$N_{4}(16,12)=0$.
\end{proposition}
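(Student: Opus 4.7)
The plan is to carry out, as a computer calculation specialized to $n=16$, the recursive classification procedure described in the paragraphs immediately preceding the proposition. Since every linear $\ZZ_4$-code being counted contains $ZRM(1,4)$ and $|ZRM(1,4)|=2^6$, the enumeration begins at $k=7$. The structural observation underlying the algorithm is that any code $\overline{\cC}$ of order $2^k$ satisfying \eqref{eq:Z4C1-2} and \eqref{eq:Z4C2-2} is obtained from some subcode $\cC\subset\overline{\cC}$ of order $2^{k-1}$ still containing $ZRM(1,4)$ by adjoining a single coset representative; hence, working inductively from small $k$ upward, the complete list of equivalence classes at level $k$ can be built from that at level $k-1$.

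At step $k$, maintain a list $\mathcal{L}_k$ of inequivalent linear $\ZZ_4$-codes of order $2^k$ that contain $ZRM(1,4)$ and whose set $\{(n_0(y)-n_2(y))^2 : y\in\cC\}$ is contained in $\{0,\beta^2,256\}$ for some fixed $\beta\in\{1,2,\ldots,15\}$, including as an auxiliary case the codes for which only $\{0,256\}$ has so far been realized (needed because $ZRM(1,4)$ itself is of this type). For each representative $\cC\in\mathcal{L}_{k-1}$, form every candidate extension $\overline{\cC}=\langle\cC,x\rangle$ as $x$ ranges over a complete system of coset representatives of $\ZZ_4^{16}/\cC$ (produced by Magma's \texttt{Transversal} routine); discard extensions of order less than $2^k$ and those in which a fourth value of $(n_0-n_2)^2$ appears. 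Sort the survivors into equivalence classes using Proposition~\ref{prop:Z4}: two candidates are equivalent if and only if the associated digraphs $\Gamma(\cdot)$ are isomorphic, tested with the \texttt{IsIsomorphic} function in Magma. Then $N_4(16,k)$ is the number of classes in $\mathcal{L}_k$ whose value-set is exactly $\{0,\beta^2,256\}$; termination at $k=12$, where no such classes survive, gives $N_4(16,12)=0$.

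The main obstacle is computational volume rather than any mathematical subtlety. Intermediate levels produce many raw extensions before equivalence reduction, and each digraph isomorphism test is on a graph with $|\cC|+47$ vertices and must be invoked pairwise across the survivor list. To keep the calculation feasible one would precompute cheap isomorphism invariants — for example the Lee weight distribution and the multiset of values $(n_0(y)-n_2(y))^2$ broken up by Lee weight — as a coarse pre-filter before calling \texttt{IsIsomorphic}, and exploit $\langle\cC,x\rangle=\langle\cC,-x\rangle$ to shorten the coset-enumeration loop. Correctness of the overall procedure is immediate from linearity: every extension of the required form at order $2^k$ arises from some predecessor at order $2^{k-1}$ that already lies in the auxiliary list, so the induction is complete and the six counts reported in the statement result from running the loop through $k=7,8,\ldots,12$.
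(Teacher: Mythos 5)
Your proposal is correct and follows essentially the same route as the paper: the paper classifies these codes by exactly the inductive one-step extension procedure $\cC \mapsto \langle \cC, x\rangle$ over coset representatives of $\ZZ_4^{16}/\cC$, starting from $ZRM(1,4)$, with equivalence decided via the digraph criterion of Proposition~\ref{prop:Z4} and {\tt IsIsomorphic}. Your explicit handling of the auxiliary intermediate codes whose value set is only $\{0,256\}$ is a point the paper leaves implicit, but it does not change the method.
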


To list the result of the classification,
we fix the generator matrix of $ZRM(1,4)$ as follows:
\[
\left(
\begin{array}{c}
1 1 1 1 1 1 1 1 1 1 1 1 1 1 1 1\\
0 2 0 2 0 2 0 2 0 2 0 2 0 2 0 2\\
0 0 2 2 0 0 2 2 0 0 2 2 0 0 2 2\\
0 0 0 0 2 2 2 2 0 0 0 0 2 2 2 2\\
0 0 0 0 0 0 0 0 2 2 2 2 2 2 2 2
\end{array}
\right).
\]
To save space, we only list the maximal linear $\ZZ_4$-codes
(with respect to the subset relation)
given in the above proposition.
The seven maximal linear
$\ZZ_4$-codes $\cC=\cC_{16,3,i}$ $(i=1,2,\ldots,7)$ with $|\cC|=2^9$
are constructed as
$\langle ZRM(1,4), x_1,x_2,x_3 \rangle$,
where $x_1,x_2,x_3$ are listed in Table~\ref{Tab:16-2-2-1}.
The $19$  maximal linear $\ZZ_4$-codes $\cC=\cC_{16,4,i}$  $(i=1,2,\ldots,19)$
with $|\cC|=2^{10}$ are constructed as
$\langle ZRM(1,4), x_1,x_2,x_3,x_4 \rangle$,
where $x_1,x_2,x_3,x_4$ are listed in Table~\ref{Tab:16-2-2-2}.
The two maximal linear
$\ZZ_4$-codes $\cC=\cC_{16,5,i}$  $(i=1,2)$ with $|\cC|=2^{11}$
are constructed as
$\langle ZRM(1,4), x_1,x_2,\ldots,x_5 \rangle$,
where $x_1,x_2,\ldots,x_5$ are listed in Table~\ref{Tab:16-2-2-3}.
For each code $\cC$, 
by a computer calculation, we determined
the value $\beta^2$ in~\eqref{eq:Z4C1-2},
the minimum Hamming distance $d_H(\cC)$  and
the minimum Lee distance $d_L(\cC)$, which are listed in
Table~\ref{Tab:16-2}.

\begin{table}[thb]
\caption{Vectors $x_j$ for $\cC_{16,3,i}$ $(i=1,2,\ldots,7)$}
\label{Tab:16-2-2-1}
\begin{center}
{\footnotesize
\begin{tabular}{c|l}
\noalign{\hrule height0.8pt}
Code & \multicolumn{1}{c}{$x_j$ $(j=1,2,3)$}\\
\hline
$\cC_{16,3,1}$ 
&$(1,0,0,3,0,1,3,0,0,1,3,0,1,0,0,3),(0,1,1,2,0,1,1,2,0,1,1,2,0,1,1,2)$,\\
&$(0,0,0,0,1,1,3,3,1,1,3,3,0,0,0,0)$\\
$\cC_{16,3,2}$ 
&$(1,0,0,1,0,1,1,2,0,1,3,0,1,0,2,3),(0,1,0,1,1,2,3,0,1,0,1,0,0,3,2,1)$,\\
&$(0,0,1,1,1,3,0,2,1,3,2,0,2,2,1,1)$\\
$\cC_{16,3,3}$ 
&$(1,0,0,1,1,2,2,3,0,1,1,0,0,1,3,0),(0,1,0,1,0,1,2,3,1,0,3,0,3,0,3,2)$,\\
&$(0,0,1,1,1,3,0,2,1,3,2,0,0,0,3,3)$\\
$\cC_{16,3,4}$ 
&$(1,0,0,1,0,3,1,2,0,1,3,2,1,2,2,1),(0,1,0,1,1,0,3,0,0,1,0,3,1,2,1,2)$,\\
&$(0,0,1,1,1,1,0,2,0,2,1,1,3,3,0,0)$\\
$\cC_{16,3,5}$ 
&$(1,0,0,1,0,3,3,0,0,1,1,0,1,2,2,1),(0,1,1,0,0,1,1,2,1,2,2,1,3,2,0,3)$,\\
&$(0,0,2,0,0,0,0,2,1,1,1,1,1,3,3,1)$\\
$\cC_{16,3,6}$ 
&$(1,0,0,1,0,3,1,2,0,1,3,2,1,2,2,1),(0,1,0,1,0,1,2,3,1,2,3,0,3,0,3,0)$,\\
&$(0,0,1,1,0,2,3,1,1,3,0,2,1,1,2,2)$\\
$\cC_{16,3,7}$ 
&$(0,2,0,2,0,0,2,2,0,0,2,2,0,2,0,2),(0,0,0,0,2,0,0,2,0,0,0,0,0,2,2,0)$,\\
&$(0,0,0,0,0,2,2,0,0,0,0,0,0,2,2,0)$\\
\noalign{\hrule height0.8pt}
\end{tabular}
}
\end{center}
\end{table}

\begin{table}[thbp]
\caption{Vectors $x_j$ for $\cC_{16,4,i}$  $(i=1,2,\ldots,19)$}
\label{Tab:16-2-2-2}
\begin{center}
{\footnotesize
\begin{tabular}{c|l}
\noalign{\hrule height0.8pt}
Code & \multicolumn{1}{c}{$x_j$ $(j=1,2,3,4)$}\\
\hline
$\cC_{16,4,1}$
&$(1,0,0,1,0,3,1,0,1,0,2,3,0,1,1,2),(0,1,0,1,0,1,2,1,0,3,2,1,2,3,2,1)$,\\
&$(0,0,1,1,0,2,3,3,1,3,0,0,3,1,0,2),(0,0,0,2,0,2,2,2,0,0,0,2,2,0,0,0)$\\
$\cC_{16,4,2}$
&$(1,0,0,1,0,3,1,2,1,2,2,3,2,1,1,0),(0,1,0,1,0,3,0,3,0,1,2,3,0,3,2,1)$,\\
&$(0,0,1,1,0,2,1,1,1,1,2,2,3,3,0,2),(0,0,0,2,0,0,0,2,1,1,1,1,1,3,1,3)$\\
$\cC_{16,4,3}$
&$(1,0,0,1,0,1,3,2,0,1,1,0,1,0,2,3),(0,1,0,1,1,0,1,0,1,0,1,0,2,3,2,3)$,\\
&$(0,0,1,1,1,1,2,2,0,0,3,3,1,1,0,0),(0,0,0,0,2,0,0,2,1,1,1,1,1,3,3,1)$\\
$\cC_{16,4,4}$
&$(1,0,0,1,0,1,1,0,1,0,2,1,0,3,3,2),(0,1,1,0,0,3,1,0,0,3,1,0,2,1,1,2)$,\\
&$(0,0,2,0,0,0,0,2,0,0,0,2,2,2,0,2),(0,0,0,2,0,0,0,2,1,1,3,3,3,3,3,3)$\\
$\cC_{16,4,5}$
&$(1,0,0,1,0,1,3,2,1,2,2,1,2,1,3,0),(0,1,0,1,1,0,1,0,0,3,0,3,1,2,1,2)$,\\
&$(0,0,1,1,1,1,0,0,0,2,3,1,3,1,0,2),(0,0,0,0,2,0,2,0,1,3,1,3,3,3,3,3)$\\
$\cC_{16,4,6}$
&$(1,0,0,1,0,1,3,2,0,1,1,0,1,0,2,3),(0,1,0,1,0,3,2,1,0,1,0,1,2,1,0,3)$,\\
&$(0,0,1,1,0,2,1,3,0,2,3,1,0,0,3,3),(0,0,0,2,0,2,0,0,0,2,0,0,2,2,2,0)$\\
$\cC_{16,4,7}$
&$(1,0,0,1,0,1,1,0,0,3,1,2,3,0,2,1),(0,1,0,1,1,0,1,0,1,2,3,0,0,3,2,1)$,\\
&$(0,0,1,1,1,3,0,2,0,2,3,1,3,3,0,0),(0,0,0,0,2,2,0,0,1,3,3,1,3,1,3,1)$\\
$\cC_{16,4,8}$
&$(1,0,0,1,0,1,3,2,0,3,3,0,1,2,0,3),(0,1,0,1,1,0,1,0,1,2,3,0,2,1,0,3)$,\\
&$(0,0,1,1,1,1,2,2,0,2,3,1,3,1,2,0),(0,0,0,0,2,0,0,2,1,3,1,3,3,3,1,1)$\\
$\cC_{16,4,9}$
&$(1,0,0,1,0,1,1,0,0,1,3,2,1,0,2,3),(0,1,0,1,1,0,1,0,0,3,0,3,1,2,1,2)$,\\
&$(0,0,1,1,1,3,0,2,1,1,0,0,0,2,1,3),(0,0,0,0,2,2,0,0,1,1,3,3,3,3,3,3)$\\
$\cC_{16,4,10}$
&$(1,0,0,1,0,1,3,2,1,0,2,1,2,1,3,2),(0,1,0,1,1,0,3,0,1,2,1,0,0,3,2,1)$,\\
&$(0,0,1,1,1,1,2,2,0,2,3,3,1,1,0,2),(0,0,0,0,2,0,0,2,1,1,3,1,1,1,1,3)$\\
$\cC_{16,4,11}$
&$(1,0,0,1,0,1,3,0,0,1,1,2,1,2,0,3),(0,1,1,0,0,1,3,2,0,3,1,2,2,3,3,2)$,\\
&$(0,0,2,0,0,0,2,0,1,1,1,1,3,1,1,3),(0,0,0,0,1,1,3,1,0,2,2,0,3,3,1,3)$\\
$\cC_{16,4,12}$
&$(1,1,0,0,0,0,1,3,1,3,0,2,0,2,3,3),(0,2,0,0,0,0,2,0,1,3,3,1,3,3,1,1)$,\\
&$(0,0,1,1,0,0,1,1,1,1,0,2,3,3,0,2),(0,0,0,2,0,0,0,2,0,0,2,0,2,2,0,2)$\\
$\cC_{16,4,13}$
&$(1,0,1,0,0,1,0,1,1,0,1,2,2,3,0,3),(0,1,0,1,0,1,0,3,0,3,2,3,0,3,2,1)$,\\
&$(0,0,2,0,0,0,2,0,0,0,0,2,2,2,2,0),(0,0,0,0,1,1,3,1,1,3,3,3,0,2,0,2)$\\
$\cC_{16,4,14}$
&$(1,0,0,1,0,3,3,0,0,1,1,0,1,2,2,1),(0,1,0,1,0,3,2,1,1,2,3,2,1,2,1,0)$,\\
&$(0,0,1,1,0,2,1,1,1,1,2,2,3,3,0,2),(0,0,0,2,0,0,0,2,1,1,1,1,1,3,1,3)$\\
$\cC_{16,4,15}$
&$(2,0,0,0,0,0,0,2,2,0,0,0,2,2,2,0),(3,0,0,0,0,2,0,1,3,2,0,0,3,1,1,2)$,\\
&$(0,1,0,1,0,3,0,3,0,3,2,3,2,3,0,3),(0,0,1,0,0,3,2,2,0,2,3,0,1,0,1,3)$\\
$\cC_{16,4,16}$
&$(1,0,0,1,0,1,3,2,1,2,2,1,2,1,3,0),(0,1,0,1,0,1,2,3,0,1,2,3,0,1,0,1)$,\\
&$(0,0,1,1,0,0,1,1,1,3,2,0,3,1,0,2),(0,0,0,2,0,0,0,2,1,3,3,3,1,3,3,3)$\\
$\cC_{16,4,17}$
&$(1,1,0,0,0,2,1,3,1,3,0,2,0,0,3,3),(0,2,0,0,0,0,0,2,0,0,0,2,2,0,2,2)$,\\
&$(0,0,1,1,0,2,1,1,1,1,0,2,3,3,2,2),(0,0,0,0,1,1,1,3,1,1,3,1,0,2,0,2)$\\
$\cC_{16,4,18}$
&$(1,0,0,1,1,0,0,1,1,0,0,1,1,0,0,1),(0,0,2,0,0,0,2,0,0,0,2,0,0,0,2,0)$,\\
&$(0,0,0,0,2,0,0,2,0,0,0,0,2,0,0,2),(0,0,0,0,0,0,0,0,2,0,0,2,2,0,0,2)$\\
$\cC_{16,4,19}$
&$(1,0,0,1,0,1,3,2,0,3,1,2,3,2,2,3),(0,2,0,0,0,2,2,2,0,2,2,2,2,0,2,2)$,\\
&$(0,3,0,0,0,3,3,3,0,3,1,3,1,2,3,1),(0,0,1,0,0,2,0,3,0,2,2,3,3,3,0,3)$\\
\noalign{\hrule height0.8pt}
\end{tabular}
}
\end{center}
\end{table}

\begin{table}[thb]
\caption{Vectors $x_j$ for $\cC_{16,5,i}$  $(i=1,2)$}
\label{Tab:16-2-2-3}
\begin{center}
{\footnotesize
\begin{tabular}{c|l}
\noalign{\hrule height0.8pt}
Code & \multicolumn{1}{c}{$x_j$ $(j=1,2,\ldots,5)$}\\
\hline
$\cC_{16,5,1}$ 
&$(1,0,0,1,0,1,3,0,1,0,2,3,0,3,3,2),(0,1,0,1,0,1,2,1,1,0,3,0,3,2,1,0)$,\\
&$(0,0,1,1,0,0,1,3,0,2,3,1,2,0,3,3),(0,0,0,2,0,0,0,2,1,1,1,1,3,1,1,3)$,\\
&$(0,0,0,0,1,1,1,3,1,1,3,1,0,2,0,2)$\\
$\cC_{16,5,2}$ 
&$(1,0,0,1,0,1,1,2,1,0,0,1,0,3,3,2),(0,1,0,1,0,1,2,1,1,2,3,2,3,0,1,2)$,\\
&$(0,0,1,1,0,0,3,1,0,2,3,1,0,2,3,3),(0,0,0,2,0,0,2,0,1,1,1,1,1,3,1,3)$,\\
&$(0,0,0,0,1,1,1,3,1,3,1,1,2,2,0,0)$\\
\noalign{\hrule height0.8pt}
\end{tabular}
}
\end{center}
\end{table}


\begin{table}[thb]
\caption{Maximal linear $\ZZ_4$-codes of length $16$ 
satisfying~\eqref{eq:Z4C1-2} and~\eqref{eq:Z4C2-2}}
\label{Tab:16-2}
\begin{center}
{\footnotesize
\begin{tabular}{c|c|c||c|c|c}
\noalign{\hrule height0.8pt}
$\cC$    & $\beta^2$ & $(d_H(\cC),d_L(\cC))$ &
$\cC$    & $\beta^2$ & $(d_H(\cC),d_L(\cC))$ \\
\hline
$\cC_{16,3,1}$ &64 &$( 4,  8)$&
$\cC_{16,3,2}$ &16 &$( 8, 12)$\\
$\cC_{16,3,3}$ &16 &$( 8, 12)$&
$\cC_{16,3,4}$ &16 &$( 8, 12)$\\
$\cC_{16,3,5}$ &16 &$( 8, 12)$&
$\cC_{16,3,6}$ &16 &$( 8, 12)$\\
$\cC_{16,3,7}$ &64 &$( 4,  8)$&
 & &\\
\hline
$\cC_{16,4, 1}$&16& $(6, 12)$&
$\cC_{16,4, 2}$&16& $(8, 12)$\\
$\cC_{16,4, 3}$&16& $(8, 12)$&
$\cC_{16,4, 4}$&16& $(6, 12)$\\
$\cC_{16,4, 5}$&16& $(8, 12)$&
$\cC_{16,4, 6}$&16& $(6, 12)$\\
$\cC_{16,4, 7}$&16& $(8, 12)$&
$\cC_{16,4, 8}$&16& $(8, 12)$\\
$\cC_{16,4, 9}$&16& $(8, 12)$&
$\cC_{16,4,10}$&16& $(8, 12)$\\
$\cC_{16,4,11}$&16& $(6, 12)$&
$\cC_{16,4,12}$&16& $(6, 12)$\\
$\cC_{16,4,13}$&16& $(6, 12)$&
$\cC_{16,4,14}$&16& $(8, 12)$\\
$\cC_{16,4,15}$&16& $(6, 12)$&
$\cC_{16,4,16}$&16& $(8, 12)$\\
$\cC_{16,4,17}$&16& $(6, 12)$&
$\cC_{16,4,18}$&64& $(4,  8)$\\
$\cC_{16,4,19}$&16& $(6, 12)$&
 &&\\
\hline
$\cC_{16,5,1}$ &16 &$( 6, 12)$&
$\cC_{16,5,2}$ &16 &$( 6, 12)$\\
\noalign{\hrule height0.8pt}
\end{tabular}
}
\end{center}
\end{table}

\begin{remark}
By a computer calculation, we
verified that $\Gamma(\cC_{16,4,16})$ and $\Gamma({\mathcal K}(4))$
are isomorphic.
\end{remark}

\section{Weakly unbiased Hadamard matrices}
\label{sec:weakly}

In analogy to the case of quasi-unbiased Hadamard matrices,
this section studies weakly unbiased Hadamard matrices.
All feasible parameter sets for weakly unbiased 
Hadamard matrices are examined for orders up to $48$.
It is also shown that the size of a set of 
mutually weakly unbiased Hadamard matrices of
order $n$ is at most $2$.

\subsection{Basic properties and feasible parameters}

Let $H,K$ be Hadamard matrices of order $n$.
Let $a_{ij}$ denote the $(i,j)$-entry of $HK^T$.
Recall that
$H,K$ are weakly unbiased if 
$a_{ij} \equiv 2 \pmod 4$ for $i,j \in \{1,2,\ldots,n\}$ and
$|\{ |a_{ij}| \mid i,j \in \{1,2,\ldots,n\}\}| \le 2$.
In this paper, we exclude unbiased Hadamard matrices
from weakly unbiased Hadamard matrices.
This implies that $|\{ |a_{ij}| \mid i,j \in \{1,2,\ldots,n\}\}| = 2$.
It follows immediately from the definition that $n \ge 8$.

Let $(H,K)$ be a pair of weakly unbiased Hadamard matrices of order $n$.
Suppose that $a,b$ are positive integers satisfying
$\{ |a_{ij}| \mid i,j \in \{1,2,\ldots,n\}\}=\{a,b\}$.
We denote the set $\{a,b\}$ by $\sigma(H,K)$.
Let $n(a)$ be the number of components $j$ with $a_{ij}=\pm a$ 
for $i=1,2,\ldots, n$.
From now on, 
we assume that $a < b$.
The value $n(a)$ does not depend on $i$. 
Indeed, it follows from $(HK^T)(HK^T)^T=n^2 I_n$ that
\begin{equation}
\label{eq:weak}
a^2 n(a) + b^2 (n-n(a)) = n^2.
\end{equation}
We say that parameters $(a,b)$ satisfying~\eqref{eq:weak}
are {\em feasible}.   
Since $(a,b,n(a))=(2,n-2,n-1)$ satisfies~\eqref{eq:weak},
the parameters $(a,b)=(2,n-2)$ are feasible for each order $n$.
The following theorem gives an upper bound on the size of a set of 
mutually weakly unbiased Hadamard matrices, which is
one of the main results of this paper.

\begin{theorem}\label{thm:weakUB}
The size of a set of 
mutually weakly unbiased Hadamard matrices of
order $n$ is at most $2$.
\end{theorem}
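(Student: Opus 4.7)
The plan is to argue by contradiction using the mod-$4$ condition in the definition of weakly unbiased, together with an elementary squared-norm identity for a sum of three rows. The only fact about $n$ I will need is that $n\equiv 0\pmod 4$, which holds whenever a weakly unbiased pair exists at all.

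I will suppose $H_1,H_2,H_3$ are mutually weakly unbiased Hadamard matrices of order $n$ and pick arbitrary rows $r_i$ of $H_i$ for $i=1,2,3$. Setting $v=r_1+r_2+r_3\in\ZZ^n$, every coordinate $v_k$ lies in $\{\pm 3,\pm 1\}$: $v_k=\pm 3$ exactly at the $a$ positions where all three signs $r_{1,k},r_{2,k},r_{3,k}$ agree, and $v_k=\pm 1$ at the remaining $n-a$ positions. This immediately gives $|v|^2=9a+(n-a)=n+8a$. On the other hand, expanding the squared norm bilinearly yields $|v|^2=\sum_i|r_i|^2+2\sum_{i<j}r_i\cdot r_j=3n+2\Sigma$, with $\Sigma=r_1\cdot r_2+r_1\cdot r_3+r_2\cdot r_3$. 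Equating the two expressions forces $\Sigma=4a-n$, so in particular $\Sigma\equiv -n\equiv 0\pmod 4$.

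Next, I will invoke the weakly unbiased hypothesis: each inner product $r_i\cdot r_j$ with $i\ne j$ is an entry of $H_iH_j^T$, and so is $\equiv 2\pmod 4$ by definition. Summing three such inner products gives $\Sigma\equiv 6\equiv 2\pmod 4$, contradicting $\Sigma\equiv 0\pmod 4$. Hence no three mutually weakly unbiased Hadamard matrices of order $n$ can coexist, establishing the bound $f\le 2$.

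There is essentially no real obstacle here: the argument reduces to a one-line mod-$4$ computation once the identity $|v|^2=n+8a$ is noticed, and that identity itself is immediate from the fact that the coordinates of a sum of three $\pm 1$ vectors must lie in $\{\pm 3,\pm 1\}$.
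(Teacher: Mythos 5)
Your proof is correct and follows essentially the same argument as the paper: both reduce to the identity $r_1\cdot r_2+r_1\cdot r_3+r_2\cdot r_3=4a-n$ (the paper writes it as $4s=n+h_1\cdot h_2+h_1\cdot h_3+h_2\cdot h_3$ after normalizing the first rows by monomial transformations, where $s$ is exactly your agreement count $a$) and then derive the same contradiction modulo $4$ from the condition that each pairwise inner product is $\equiv 2\pmod 4$. Your derivation of the identity via $|r_1+r_2+r_3|^2$ is a slightly cleaner packaging that avoids the normalization step, but the underlying counting is identical.
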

\begin{proof}
Note that $n \ge 8$ by the definition.
Suppose that $\{H_1,H_2,H_3\}$ is a set of 
three mutually weakly unbiased Hadamard 
matrices of order $n$.
Let $h_i$ denote the first row of $H_i$ $(i=1,2,3)$.
By an argument similar to that in Proposition~\ref{prop:reduction},
$(H,K)$ is a pair of weakly unbiased Hadamard matrices 
if and only if  
$(HP,KP)$ is a pair of weakly unbiased Hadamard matrices 
for any monomial $(1,-1,0)$-matrix $P$.
Hence, without loss of generality, we may assume the following:
\[
\begin{array}{cccccccc}
h_1 =&(&+ \cdots + & + \cdots + & + \cdots + & + \cdots + &),\\
h_2 =&(&+ \cdots + & + \cdots + & - \cdots - & - \cdots - &),\\
h_3 =&(&
\underbrace{+ \cdots +}_{s \text{ columns}} &
{- \cdots -} &
{+ \cdots +} &
{- \cdots -}&).
\end{array}
\]
It follows that
$4s=n+ h_1 \cdot h_2
+ h_1 \cdot h_3
+ h_2 \cdot h_3$.
This gives a contradiction to the fact
that $
h_1 \cdot h_2  \equiv
h_1 \cdot h_3  \equiv
h_2 \cdot h_3  \equiv 2 \pmod 4$.
\end{proof}
\begin{remark}
The above theorem is known for 
$n \equiv 4 \pmod 8$~\cite[Lemma~13]{BK10}.
\end{remark}

For $n=4,8,\ldots,48$, we give in Table~\ref{Tab:weakP}
the feasible parameters $(a,b)$ along with $n(a)$.
The third column of the table indicates 
our present state of knowledge about the existence of
a pair of weakly unbiased Hadamard matrices for $n$ and $(a,b,n(a))$.

\begin{table}[thbp]
\caption{Weakly unbiased Hadamard matrices $(n=4,8,\ldots,48)$}
\label{Tab:weakP}
\begin{center}
{\footnotesize
\begin{tabular}{c|c|c|l}
\noalign{\hrule height0.8pt}
$n$ &$(a,b,n(a))$ & Existence & \multicolumn{1}{c}{Reference}\\
\hline
8 & $(2,6,7)$ & Yes & Proposition~\ref{prop:weak}\\
\hline
12& $(2,6,9)$ & Yes & \cite[Table~3]{BK10}, Section~\ref{Subsec:weakB} \\
  & $(2,10,11)$ & Yes & Proposition~\ref{prop:weak}, Section~\ref{Subsec:weakB} \\
\hline
16& $(2,6,10)$ & Yes & Proposition~\ref{prop:weak2}, Section~\ref{Subsec:weakB} \\
  & $(2,10,14)$& No & Section~\ref{Subsec:weakCon} \\
  & $(2,14,15)$ & Yes & Proposition~\ref{prop:weak}, Section~\ref{Subsec:weakB} \\
\hline
20& $(2,6,10)$ & Yes & \cite[Table~6]{BK10} \\
  & $(2,18,19)$ & Yes & Proposition~\ref{prop:weak}, Section~\ref{Subsec:weakB} \\
\hline
24& $(2,6,9)$  & Yes  &  Section~\ref{Subsec:weakCon}\\
  & $(2,10,19)$ & No & Section~\ref{Subsec:weakCon} \\
  & $(2,22,23)$ & Yes & Proposition~\ref{prop:weak}, Section~\ref{Subsec:weakB} \\
\hline
28& $(2,6,7)$  & No &   Section~\ref{Subsec:weakCon}\\
  & $(2,10,21)$ & No &  Section~\ref{Subsec:weakCon}\\
  & $(2,26,27)$ & Yes & Proposition~\ref{prop:weak}\\
\hline
32& $(2,6,4)$  & ? & \\
  & $(2,30,31)$ & Yes & Proposition~\ref{prop:weak}, Section~\ref{Subsec:weakB}\\
\hline
36& $(2,10,24)$  & ? & \\
  & $(2,14,30)$  & ? & \\
  & $(2,34,35)$ & Yes & Proposition~\ref{prop:weak}\\
\hline
40& $(2,10,25)$  & ? & \\
  & $(2,22,37)$  & ? & \\
  & $(2,38,39)$ & Yes & Proposition~\ref{prop:weak}\\
  & $(6,14,39)$  & ? & \\
\hline
44& $(2,42,43)$ & Yes & Proposition~\ref{prop:weak}\\
\hline
48& $(2,10,26)$  & ? & \\
  & $(2,14,37)$  & ? & \\
  & $(2,46,47)$ & Yes & Proposition~\ref{prop:weak}\\
  & $(6,10,39)$  & ? & \\
  & $(6,18,46)$  & ? & \\
\noalign{\hrule height0.8pt}
\end{tabular}
}
\end{center}
\end{table}

Now, we give two methods for constructing weakly unbiased Hadamard matrices.
Let 
$H=\left(\begin{smallmatrix}a&y^T\\x&H_1\end{smallmatrix}\right)$,
$H'=\left(\begin{smallmatrix}a'&y'^T\\x'&H_1'\end{smallmatrix}\right)$
be Hadamard matrices of order $n$,
where $H_1,H'_1$ are $(n-1) \times (n-1)$ matrices,
$x,x',y,y'$ are $(n-1) \times 1$ matrices and
$a,a' \in \{1,-1\}$.
Let $K$ be the Hadamard matrix obtained from $H'$ by negating the first column. 
Then we have
\begin{equation}\label{eq:HH'}
H K^T
=HH'^T
+\begin{pmatrix}-2a a'&-2ax'^T\\-2a'x&-2xx'^T \end{pmatrix}.
\end{equation}

\begin{proposition}\label{prop:weak}
If there exists a Hadamard matrix of order $n \ge 8$,
then there exists a pair of weakly unbiased Hadamard matrices 
$H,K$ of
order $n$ with $\sigma(H,K)=\{2,n-2\}$.
\end{proposition}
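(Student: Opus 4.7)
The plan is to apply formula~\eqref{eq:HH'} in the simplest possible way: take $H'=H$. The construction then produces $K$ directly from $H$ by negating the first column, and the rest is a short entry-by-entry inspection.

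More precisely, given a Hadamard matrix $H=\bigl(\begin{smallmatrix}a&y^T\\ x&H_1\end{smallmatrix}\bigr)$ of order $n\ge 8$, I would define $K$ to be the matrix obtained from $H$ by negating its first column. Clearly $K$ is again a Hadamard matrix of order $n$. Substituting $H'=H$ (so $a'=a$, $x'=x$, $H_1'=H_1$) into~\eqref{eq:HH'} gives
\begin{equation*}
HK^T = HH^T + \begin{pmatrix}-2a^2 & -2a x^T\\ -2a x & -2xx^T\end{pmatrix} = nI_n + \begin{pmatrix}-2 & -2a x^T\\ -2a x & -2xx^T\end{pmatrix},
\end{equation*}
since $a^2=1$ and $HH^T=nI_n$.

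Next I would read off the entries of $HK^T$. The diagonal entries are all equal to $n-2$: the top-left is $n-2$, while for $i\ge 2$ the $(i,i)$-entry is $n-2 x_{i-1}^2 = n-2$. Every off-diagonal entry is of the form $-2a x_{i-1}$, $-2a x_{j-1}$, or $-2 x_{i-1} x_{j-1}$, hence lies in $\{+2,-2\}$. Therefore $\{|a_{ij}| \mid i,j\in\{1,\dots,n\}\} = \{2,\,n-2\}$, and this set has exactly two elements because $n\ge 8$ forces $n-2\ge 6>2$. Also every entry is congruent to $2\pmod 4$: the off-diagonal entries $\pm 2$ obviously are, and $n-2\equiv 2\pmod 4$ since $n$ is a multiple of $4$. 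Thus $(H,K)$ is a pair of weakly unbiased Hadamard matrices with $\sigma(H,K)=\{2,n-2\}$, as required.

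There is essentially no obstacle here: the identity~\eqref{eq:HH'} has already reduced the problem to an inspection, and the role of the hypothesis $n\ge 8$ is only to guarantee both that $n-2\not=2$ (so the two absolute values are genuinely distinct and we are not in the unbiased case) and that $n\equiv 0\pmod 4$ (so the diagonal entries satisfy the mod $4$ condition). The step that deserves the most care is verifying these two congruence/inequality conditions simultaneously, but both are immediate from $n\ge 8$ together with the standard restriction on Hadamard orders.
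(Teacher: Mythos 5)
Your proposal is correct and follows exactly the paper's argument: the paper also takes $H'=H$ in~\eqref{eq:HH'} and observes that the entries of $HK^T$ are $n-2$ and $\pm 2$. You have simply written out the entry-by-entry verification and the mod-$4$ check that the paper leaves implicit.
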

\begin{proof}
Suppose that $H'=H$.
From~\eqref{eq:HH'}, 
the entries of $H K^T$ are $n-2, \pm 2$. 
The result follows.
\end{proof}

\begin{proposition}\label{prop:weak2}
Suppose that $n=4k^2$, where $k$ is even.
If there exists a pair of unbiased Hadamard matrices of order $n$, 
then there exists a pair of weakly unbiased Hadamard matrices $H,K$ of order $n$ with $\sigma(H,K)=\{\sqrt{n}-2,\sqrt{n}+2\}$.
\end{proposition}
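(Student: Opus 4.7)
The plan is to apply the identity~\eqref{eq:HH'} to a pair of unbiased Hadamard matrices. Let $H,H'$ be a pair of unbiased Hadamard matrices of order $n=4k^{2}$, and write them in the block form
\[
H=\begin{pmatrix}a&y^{T}\\ x&H_{1}\end{pmatrix},\qquad H'=\begin{pmatrix}a'&y'^{T}\\ x'&H'_{1}\end{pmatrix}.
\]
Let $K$ be the matrix obtained from $H'$ by negating the first column, so $K$ is again a Hadamard matrix of order $n$. Then~\eqref{eq:HH'} gives
\[
HK^{T}=HH'^{T}-\begin{pmatrix}2aa'&2ax'^{T}\\ 2a'x&2xx'^{T}\end{pmatrix}.
\]

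Since $H$ and $H'$ are unbiased of order $n=4k^{2}$, every entry of $HH'^{T}$ equals $\pm\sqrt{n}=\pm 2k$. On the other hand, each entry of the correction matrix above is of the form $\pm 2$, because $a,a',x_{i},x'_{j}\in\{1,-1\}$. Therefore every entry of $HK^{T}$ lies in
\[
\{\pm 2k\pm 2\}\subset\{\pm(\sqrt{n}-2),\pm(\sqrt{n}+2)\}.
\]
At this point the hypothesis that $k$ is even becomes essential: it yields $2k\equiv 0\pmod 4$, so $2k\pm 2\equiv 2\pmod 4$. Hence every entry of $HK^{T}$ is congruent to $2$ modulo $4$, which is the first condition in the definition of weak unbiasedness.

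Finally, I must verify that both absolute values $\sqrt{n}-2$ and $\sqrt{n}+2$ actually occur, so that $\sigma(H,K)=\{\sqrt{n}-2,\sqrt{n}+2\}$ (and in particular $H,K$ are not unbiased). If instead $|a_{ij}|=c$ were constant, then the orthogonality relation $HK^{T}(HK^{T})^{T}=nI_{n}$ applied to any row would give $nc^{2}=n^{2}$, i.e.\ $c=\sqrt{n}=2k$; but $2k\notin\{2k-2,2k+2\}$, a contradiction. Hence both absolute values appear, and $(H,K)$ is a pair of weakly unbiased Hadamard matrices with $\sigma(H,K)=\{\sqrt{n}-2,\sqrt{n}+2\}$.

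I do not anticipate a real obstacle: the substantive observation is that the single column flip perturbs each entry of $HH'^{T}$ by exactly $\pm 2$, and then the mod-$4$ check is immediate because $\sqrt{n}=2k$ is divisible by $4$ when $k$ is even. The orthogonality argument rules out the degenerate constant-modulus case, completing the proof.
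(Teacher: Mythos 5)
Your proof is correct and follows exactly the paper's approach: negate the first column of $H'$ and apply~\eqref{eq:HH'} to conclude the entries of $HK^T$ are $\pm\sqrt{n}\pm 2$. The paper's own proof is just those two sentences; your added verifications (the mod-$4$ check via $k$ even, and the row-orthogonality argument showing both absolute values occur) are details the paper leaves implicit, and they are carried out correctly.
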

\begin{proof}
Suppose that $(H,H')$ is a pair of unbiased Hadamard matrices of order $n$.
From~\eqref{eq:HH'}, 
the entries of $H K^T$ are $\pm\sqrt{n}\pm2$. 
The result follows.
\end{proof}

Since there exists a pair of unbiased Hadamard matrices of order 
$4^k$ for a positive integer $k$~\cite{CS73},
the above proposition implies the existence of 
a pair of weakly unbiased Hadamard matrices $H,K$ of order 
$4^k$ with $\sigma(H,K)=\{2^k-2,2^k+2\}$ for $k \ge 2$.

\subsection{Observations by straightforward construction}
\label{Subsec:weakCon}



For each $H$ of the five inequivalent Hadamard matrices of order $16$
and the $60$ inequivalent Hadamard matrices of order $24$,
our exhaustive computer search verified that
there exists no $(1,-1)$-vector $x$ of lengths $16$ and $24$,
respectively, such that 
$ |x \cdot r| \in \{2,10\}$ for all rows $r$ of $H$.
This means that there exists no pair of 
weakly unbiased Hadamard matrices
$H,K$ of orders $16$ and $24$ with $\sigma(H,K)=\{2,10\}$. 
%
%
We denote by $H_{24,3}$ \verb+had.24.8+ in~\cite{Hadamard},
which is a Hadamard matrix of order $24$.
Our computer search under the condition~\eqref{eq:3col} on $K$
found a Hadamard matrix $\overline{K_{24,3}}$ such that
$(H_{24,3}, \overline{K_{24,3}})$ is a pair of
weakly unbiased Hadamard matrices with
$\sigma(H_{24,3}, \overline{K_{24,3}})=\{2,6\}$,
where $\overline{K_{24,3}}$ is listed in Figure~\ref{Fig:24weak}.

\begin{figure}[thb]
\centering
{\scriptsize
\[
\overline{K_{24,3}}=
\left( \begin{array}{c}
+++-+--++---+--++--+--+-\\
++++-+++++--++--+++-+---\\
+++---+---+++++---------\\
++++-----++-+-++++--++++\\
++++++--+++--+++--++---+\\
+++++-++--++-----++++-++\\
++-------+---+----+++++-\\
++--++---+-+----++-----+\\
++--+-++++++--+++-+-++--\\
++--++++----++++-++--+++\\
++-+-++-+--++--+---+++-+\\
++-+-+-++-++-++-++-+-++-\\
+-+--+--+--+--++-++-+-+-\\
+-+--+++-++----+-+-+-+--\\
+-+-+-+-+----++-++-+++-+\\
+-+++++--+-++-+-+-++-++-\\
+-+-++-+++++++------++++\\
+-++---+---+-+-++-+--+-+\\
+--++++---+--+-++---+-+-\\
+--+--++++----+-------++\\
+--++--+-+-+++++-+-++---\\
+----+-+--+-+-+-+-+++--+\\
+-----+-++++++-+++++--++\\
+--++---+-+-+----++--+--\\
\end{array}
\right)
\]
}
\caption{The matrix $\overline{K_{24,3}}$}
\label{Fig:24weak}
\end{figure}

Now, for a Hadamard matrix $H$ of order $n$,
we consider the following graph $\Gamma(H,\{a,b\})$, in order to
convert the problem of finding $K$ such that $(H,K)$ is 
a pair of weakly unbiased Hadamard matrices
of order $n$ with $\sigma(H,K)=\{a,b\}$ into that of
finding an $n$-clique in the graph.
Let $h_i$ be the $i$-th row of $H$.
Set
\[
V_j=
\{x \in X_j \mid |x \cdot h_i| \in \{a,b\}\ (i=1,2,\ldots,n)\}
\quad (j=1,2,3,4),
\]
where
$
X_j=\{(x_1,x_2,\ldots,x_{n}) \in \{1,-1\}^n \mid (x_1,x_2,x_3)=
Y_j\}$ with
$Y_1=(1,1,1)$, $Y_2=(1,1,-1)$,
$Y_3=(1,-1,1)$ and $Y_4=(1,-1,-1)$.
We define the simple graph $\Gamma(H,\{a,b\})$, 
whose set of vertices
is $V=V_1\cup V_2 \cup V_3 \cup V_4$ 
and two vertices $x,y \in V$ are adjacent 
if $x \cdot y =0$.
It follows that
the graph $\Gamma(H,\{a,b\})$ contains an $n$-clique  if and only if
there exists a Hadamard matrix $K$ of order $n$ such that
$(H,K)$ is a pair of weakly unbiased Hadamard matrices of order $n$ 
with $\sigma(H,K)=\{a,b\}$.
We denote by $H_{28,1},H_{28,2},\ldots,H_{28,487}$ 
\verb+had.28.1+, 
\verb+had.28.2+, $\ldots$,
\verb+had.28.487+ in~\cite{Hadamard}, respectively,
which are the $487$ inequivalent Hadamard matrices of order $28$.
By a computer calculation, we verified that each of 
the four induced subgraphs on $V_j$ $(j=1,2,3,4)$ of 
$\Gamma(H_{28,i},\{2,6\})$ (resp.\ $\Gamma(H_{28,i},\{2,10\})$)
contains a $7$-clique 
for only $i=54,295,456,479,484,487$
(resp.\ $i=128,197,295,297,374,445,453,456,476$, $477, 478, 479,481,485$).
For these $i$, we list in Table~\ref{Tab:mc2826210}
the sizes $mc(i)$ of the maximum cliques of $\Gamma(H_{28,i},\{2,6\})$
and $\Gamma(H_{28,i},\{2,10\})$.
From Table~\ref{Tab:mc2826210},
there exists no pair 
of weakly unbiased Hadamard matrices $H,K$ of order $28$ 
with $\sigma(H,K)=\{2,6\}$ and $\{2,10\}$.
Our calculations for finding cliques in this section were done by
a computer calculation using the {\sc Cliquer} software~\cite{Cliquer}.

\begin{table}[thb]
\caption{Maximum cliques of
 $\Gamma(H_{28,i},\{2,6\})$ and $\Gamma(H_{28,i},\{2,10\})$}
\label{Tab:mc2826210}
\begin{center}
{\footnotesize
\begin{tabular}{c|ccccccc}
\noalign{\hrule height0.8pt}
Graph & \multicolumn{6}{c}{$(i,mc(i))$}\\
\hline
$\Gamma(H_{28,i},\{2,6\})$ &
$( 54, 12)$&$(295, 14)$&$(456, 12)$&$(479, 26)$&$(484, 26)$ & $(487, 16)$ \\
\hline
$\Gamma(H_{28,i},\{2,10\})$ &
$(128,  9)$&$(197, 10)$&$(295, 16)$&$(297, 12)$&$(374, 10)$&$(445, 12)$\\
 & 
$(453, 10)$&$(456, 12)$&$(476, 12)$&$(477, 10)$&$(478, 12)$&$(479, 14)$\\
 & 
$(481, 12)$&$(485, 12)$\\
\noalign{\hrule height0.8pt}
\end{tabular}
}
\end{center}
\end{table}


\section{A coding-theoretic approach to weakly unbiased Hadamard matrices}
\label{sec:weakF2}

In this section, we give a coding-theoretic approach to 
weakly unbiased Hadamard matrices.
For modest lengths, we give classifications of some binary 
self-complementary codes, in order to construct 
weakly unbiased Hadamard matrices.

\subsection{Binary codes and weakly unbiased Hadamard matrices}

Similar to Theorem~\ref{thm:F2},
we give a coding-theoretic approach to 
weakly unbiased Hadamard matrices.

\begin{theorem}\label{thm:weakF2}
Let $a, b$ be odd integers with $0<a < b < n/2$.
There exists a self-complementary 
$(n,4n)$ code $C$ satisfying the following conditions:
\begin{align}
\label{eq:weakB1}
&\{i \in \{0,1,\ldots,n\}\mid A_i(C) \ne 0\}=\{0,n/2 \pm a,n/2 \pm b,n/2,n\},\\
\label{eq:weakB2}
&A_{n/2}(C)=2n-2, \\
\label{eq:weakB3}
&C=C_1 \cup C_2,
\end{align}
where 
each $C_i$ has distance distribution
$(A_0(C_i),A_{n/2}(C_i),A_n(C_i))=(1,2n-2,1)$
if and only if
there exists a pair of weakly unbiased Hadamard matrices $H,K$ with 
$\sigma(H,K)=\{2a,2b\}$.
\end{theorem}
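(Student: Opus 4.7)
I would follow the same overall strategy as Theorem~\ref{thm:F2}, using the map $\psi\colon \ZZ_2^n \to \{1,-1\}^n$ defined component-wise by $\psi(0)=1$, $\psi(1)=-1$, which satisfies $\psi(x)\cdot \psi(y)= n-2d(x,y)$. The new features relative to Theorem~\ref{thm:F2} are the role of condition~\eqref{eq:weakB2} and the parity hypothesis on $a,b$: together these are what turn the $\equiv 2\pmod 4$ requirement in the definition of ``weakly unbiased'' into a distance-distribution statement.

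\emph{Forward direction (code gives matrices).} Assume $C=C_1\cup C_2$ satisfies~\eqref{eq:weakB1}--\eqref{eq:weakB3}. Since $A_n(C_i)=1$, each $C_i$ is self-complementary, so $\psi(C_i)=X_i \cup (-X_i)$ with $|X_i|=n$ and $X_i\cap(-X_i)=\emptyset$, and the internal distances $0,n/2,n$ force the elements of $X_i$ to be pairwise orthogonal; hence $X_i$ is the row set of a Hadamard matrix $H_i$. Next, count ordered pairs at distance $n/2$: the two internal contributions sum to $4n(2n-2)$, so if $m$ denotes the number of ordered cross pairs at distance $n/2$, then
\[
(2n-2)\cdot 4n \;=\; A_{n/2}(C)\cdot|C| \;=\; 4n(2n-2)+2m,
\]
which forces $m=0$. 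Therefore every cross distance lies in $\{n/2\pm a,\,n/2\pm b\}$, and the corresponding cross inner product $\psi(x)\cdot\psi(y)$ lies in $\{\pm 2a,\pm 2b\}$. Since $a,b$ are odd, these values are all $\equiv 2\pmod 4$ and take exactly two distinct absolute values, so $(H_1,H_2)$ is weakly unbiased with $\sigma(H_1,H_2)=\{2a,2b\}$.

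\emph{Reverse direction (matrices give a code).} Given a pair $(H,K)$ of weakly unbiased Hadamard matrices with $\sigma(H,K)=\{2a,2b\}$, I would set
\[
C_i=\bigl\{\tfrac{1}{2}(\epsilon r+\mathbf{1}) : r \text{ a row of the respective matrix},\ \epsilon\in\{1,-1\}\bigr\}
\]
for $i=1,2$ ($C_1$ from $H$, $C_2$ from $K$); each is a self-complementary $(n,2n)$ code with distance distribution $(1,2n-2,1)$. The bound $b<n/2$ implies no row of $H$ equals $\pm$ a row of $K$, giving $C_1\cap C_2=\emptyset$, so $C:=C_1\cup C_2$ has $|C|=4n$ and is self-complementary. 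The identity $d\bigl(\tfrac12(\epsilon h+\mathbf{1}),\tfrac12(\epsilon' k+\mathbf{1})\bigr)=(n-\epsilon\epsilon'\,h\cdot k)/2$, combined with $h\cdot k\in\{\pm 2a,\pm 2b\}$, produces exactly the distance set $\{n/2\pm a,\,n/2\pm b\}$ between halves; in particular no cross pair is at distance $n/2$, so $A_{n/2}(C)=2n-2$ and~\eqref{eq:weakB1}--\eqref{eq:weakB3} all hold.

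\emph{Main obstacle.} There is no deep new difficulty beyond Theorem~\ref{thm:F2}: the real step is recognising that the weakly-unbiased condition $a_{ij}\equiv 2\pmod 4$ translates coding-theoretically into ``$a,b$ odd and no cross pair at distance $n/2$,'' the second half being exactly~\eqref{eq:weakB2}. The one small technical point is the disjointness $C_1\cap C_2=\emptyset$ in the reverse direction, which I would dispatch by invoking $2b<n$ to preclude any row of $H$ from coinciding with $\pm$ a row of $K$.
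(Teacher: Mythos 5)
Your proposal is correct and follows essentially the same route as the paper, which simply adapts the proof of Theorem~\ref{thm:F2} and notes that the condition $A_{n/2}(C)=2n-2$ corresponds to $HK^T$ having no zero entry --- exactly the point your counting argument makes precise. The only (trivially fillable) omissions are the observations that cross distances cannot be $0$ or $n$ (by disjointness and self-complementarity of each $C_i$) and that both values $2a$ and $2b$ actually occur as entries of $H_1H_2^T$ because of~\eqref{eq:weakB1}.
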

\begin{proof}
The proof is similar to that of Theorem~\ref{thm:F2}. 
We remark that the condition $A_{n/2}(C)=2n-2$ corresponds to
the condition that $HK^T$ contains no zero entry.
\end{proof}


\subsection{Binary codes satisfying~\eqref{eq:weakB1}--\eqref{eq:weakB3}}
\label{Subsec:weakB}

By the method given in Section~\ref{Subsec:B},
for some $(n,2n)$ codes $C_1$ ($n=8,12,16,20,$ $24$),
our computer calculation completed the classification of codes of the form 
$C=C_1 \cup (u +C_1)$
satisfying~\eqref{eq:weakB1}--\eqref{eq:weakB3}.
Let $N_2(C_1)$ denote the number of inequivalent $(n,4n)$ codes 
of the form $C_1 \cup (u +C_1)$
satisfying~\eqref{eq:weakB1}--\eqref{eq:weakB3}.




\begin{proposition}
$N_2(RM(1,3))=1$.
$N_2(C(H_{12}))=2$.
$N_2(RM(1,4))=2$,
$N_2(C(H_{16,1}))=4$,
$N_2(C(H_{16,2}))=6$,
$N_2(C(H_{16,3}))=3$ and
$N_2(C(H_{16,4}))=3$.
$N_2(C(H_{20,i}))=1$ $(i=1,2,3)$.
$N_2(C(H_{24,i}))=1$ $(i=1,2)$.
$N_2(RM(1,5))=1$.
\end{proposition}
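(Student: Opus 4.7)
The plan is to apply the straightforward enumerate-and-classify procedure described in Section~\ref{Subsec:B} to the setting of Theorem~\ref{thm:weakF2}. Fix one of the listed base codes $C_1$; it has distance distribution $(1,2n-2,1)$. For any $u\in\ZZ_2^n\setminus C_1$, the code $C=C_1\cup(u+C_1)$ has $4n$ codewords, is self-complementary, and the translate $u+C_1$ inherits the distribution $(1,2n-2,1)$ from $C_1$, so condition~\eqref{eq:weakB3} is automatic and only~\eqref{eq:weakB1} and~\eqref{eq:weakB2} need to be tested on $C$. The key reduction is that these two conditions together are equivalent to
\[
\{d(c,u+c')\mid c,c'\in C_1\}=\{n/2-b,\ n/2-a,\ n/2+a,\ n/2+b\}
\]
for some odd integers $0<a<b<n/2$; indeed, $n/2$ is even in every relevant case ($n\equiv 0\pmod 4$), so odd-offset cross-distances automatically avoid $n/2$, producing $A_{n/2}(C)=2n-2$, and combining with the internal $(1,2n-2,1)$ distribution of each coset gives exactly the distance distribution required in~\eqref{eq:weakB1}. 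For linear $C_1$ (the three cases $RM(1,m)$, $m=3,4,5$) this reduces further to checking that the weight enumerator of the single coset $u+C_1$ is supported on $\{n/2\pm a, n/2\pm b\}$.

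The algorithm I would run is: enumerate candidate cosets $u+C_1$ (it suffices to take one $u$ per coset, and to identify $u$ with $u+\mathbf{1}$ using self-complementarity); keep the $u$ passing the cross-distance test above; and then classify the resulting codes $C$ up to the equivalence defined in Section~\ref{sec:2.2} using the {\sc Magma}-based procedure of Section~\ref{Subsec:B}. Specifically, after translating both codes to contain $\mathbf{0}$, two such codes $C$ and $D$ are equivalent precisely when the incidence structure of $m(D)$ is isomorphic to that of $m(\{\sigma(c+x)\mid c\in C\})$ for some $x\in C$, which I would decide with {\tt IsIsomorphic}. The number $N_2(C_1)$ is then read off as the count of equivalence classes obtained.

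The main obstacle is the scale of the enumeration, worst for $C_1=RM(1,5)$ of length $32$, where the coset space has $2^{32}/64=2^{26}$ elements. This is handled by exploiting the automorphism group $\mathrm{Aut}(C_1)$ acting on cosets (so only one $u$ per orbit needs to be tested) together with the fast weight-support test above, which prunes virtually all candidates in constant time per coset. After these two pruning steps only a small number of admissible cosets remain for each $C_1$, so running {\tt IsIsomorphic} pairwise among them is inexpensive, and the tabulated values $N_2(RM(1,3))=1$, $N_2(C(H_{12}))=2$, $N_2(RM(1,4))=2$, $N_2(C(H_{16,i}))\in\{3,4,6\}$, $N_2(C(H_{20,i}))=N_2(C(H_{24,i}))=N_2(RM(1,5))=1$ follow.
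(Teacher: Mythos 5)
Your proposal is correct and follows essentially the same route as the paper: the paper's proof is precisely the enumerate-cosets-and-classify computation of Section~\ref{Subsec:B} specialized to $f=2$, with candidate translates $u+C_1$ filtered by the distance conditions and equivalence decided via incidence-structure isomorphism in {\sc Magma}. Your added observations (that \eqref{eq:weakB1}--\eqref{eq:weakB2} reduce to the cross-distance set being exactly $\{n/2\pm a, n/2\pm b\}$, hence to a coset weight-support test when $C_1$ is linear, and that $\mathrm{Aut}(C_1)$ can prune the search) are correct implementation refinements rather than a different method.
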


The unique $(8,32)$ code $D_{8,1}$
is constructed as 
$\langle RM(1,3),u_1 \rangle$, where $\supp(u_1)=\{1\}$.
The two  $(12,48)$ codes $D_{12,i}$ $(i=1,2)$
are constructed as $C(H_{12}) \cup (u_i+C(H_{12}))$,
where
$\supp(u_1)=\{1\}$ and $\supp(u_2)=\{1,2,3\}$.
To save space, 
we only give the two $(16,64)$ codes
$D_{16,0,i}$ $(i=1,2)$ corresponding to $N_2(RM(1,4))$.
The two codes are constructed as $\langle RM(1,4),u_i \rangle$,
where
$\supp(u_1)=\{1\}$ and 
$\supp(u_2)=\{1,2,3,5,9\}$.
Let $H_{20,1}$ be the Paley Hadamard matrix of order $20$ having the 
form~\eqref{eq:R},
where $R$ is the $19 \times 19$
circulant matrix with first row:
\[
(-+--++++-+-+----++-).
\]
We denote by $H_{20,2},H_{20,3}$ 
\verb+had.20.toncheviii+, \verb+had.20.toncheviv+
in~\cite{Hadamard}, respectively, which are the
remaining two Hadamard matrices of order $20$.
The unique $(20,80)$ code $D_{20,i}$
is constructed as 
$C(H_{20,i}) \cup (u+C(H_{20,i}))$,
where $\supp(u)=\{1\}$ $(i=1,2,3)$.
The unique $(24,96)$ code $D_{24,i}$
is constructed as $C(H_{24,i}) \cup (u+C(H_{24,i}))$,
where $\supp(u)=\{1\}$ $(i=1,2)$.
The unique $[32,7]$ code $D_{32,1}$
is constructed as $\langle RM(1,5),u \rangle$,
where
$\supp(u)=\{4\}$
and the generator matrix of $RM(1,5)$ is given by:
\[
\left(
\begin{array}{c}
1 0 0 1 0 1 1 0 0 1 1 0 1 0 0 1 0 1 1 0 1 0 0 1 1 0 0 1 0 1 1 0\\
0 1 0 1 0 1 0 1 0 1 0 1 0 1 0 1 0 1 0 1 0 1 0 1 0 1 0 1 0 1 0 1\\
0 0 1 1 0 0 1 1 0 0 1 1 0 0 1 1 0 0 1 1 0 0 1 1 0 0 1 1 0 0 1 1\\
0 0 0 0 1 1 1 1 0 0 0 0 1 1 1 1 0 0 0 0 1 1 1 1 0 0 0 0 1 1 1 1\\
0 0 0 0 0 0 0 0 1 1 1 1 1 1 1 1 0 0 0 0 0 0 0 0 1 1 1 1 1 1 1 1\\
0 0 0 0 0 0 0 0 0 0 0 0 0 0 0 0 1 1 1 1 1 1 1 1 1 1 1 1 1 1 1 1
\end{array}
\right).
\]
All distance distributions are listed in Table~\ref{Tab:weakDD}.
The distance distributions were obtained by a computer calculation.


\begin{table}[thb]
\caption{Distance distributions}
\label{Tab:weakDD}
\begin{center}
{\footnotesize
\begin{tabular}{c|c}
\noalign{\hrule height0.8pt}
Code & \multicolumn{1}{c}{$(A_0,A_1,\ldots,A_n)$} \\
\hline
$D_{8,1}$ & $( 1, 1, 0, 7, 14, 7, 0, 1, 1 )$ \\
$D_{12,1}$& $(1, 1, 0, 0, 0, 11, 22, 11, 0, 0, 0, 1, 1)$\\
$D_{12,2}$& $(1, 0, 0, 3, 0, 9, 22, 9, 0, 3, 0, 0, 1)$\\
$D_{16,0,1}$&$(1, 1, 0, 0, 0, 0, 0, 15, 30, 15, 0, 0, 0, 0, 0, 1, 1)$\\
$D_{16,0,2}$&$(1, 0, 0, 0, 0, 6, 0, 10, 30, 10, 0, 6, 0, 0, 0, 0, 1)$\\
$D_{20,i}$ $(i=1,2,3)$ & $(1, 1, 0, 0, \ldots, 0, 0, 19, 38, 19, 0, 0, \ldots, 0, 0, 1, 1)$\\
$D_{24,i}$ $(i=1,2)$ &$(1, 1, 0, 0, \ldots, 0, 0, 23, 46, 23, 0, 0, \ldots, 0, 0, 1, 1)$\\
$D_{32,1}$&$(1, 1, 0, 0, \ldots, 0, 0, 31, 62, 31, 0, 0, \ldots, 0, 0, 1, 1)$\\
\noalign{\hrule height0.8pt}
\end{tabular}
}
\end{center}
\end{table}


Similar to Section~\ref{Subsec:Z4},
we consider linear $\ZZ_4$-codes $\cC$ of length $n=2^m$
satisfying the following conditions:
\begin{align}
\label{eq:Z4-weak1}
&\{(n_0(x)-n_2(x))^2\mid x\in \cC\}=\{0,a^2,b^2,n^2\},
\\
\label{eq:Z4-weak2}
&|\{x \in \cC \mid n_0(x)=n_2(x)\}|=4n-2,\\
\label{eq:Z4-weak3}
&\text{$\cC$  contains $ZRM(1,m)$ as a subcode},
\end{align}
where $a,b$ are odd integers with $0<a<b<n$.
Then $\phi(\cC)$ satisfies~\eqref{eq:weakB1}--\eqref{eq:weakB3}.
%
Our exhaustive computer search based on 
the method in Section~\ref{Subsec:Z4}
verified that
there exists no $\ZZ_4$-code 
satisfying~\eqref{eq:Z4-weak1}--\eqref{eq:Z4-weak3}
for lengths $8$ and $16$.

\section{Some modification of 
weakly unbiased Hadamard matrices}
\label{sec:II}

Finally, some modification of the notion of
weakly unbiased Hadamard matrices is given.
We derive some results which are an analogy to those of quasi-unbiased 
Hadamard matrices and weakly unbiased Hadamard matrices.

\subsection{Type~II weakly unbiased Hadamard matrices}
Let $H,K$ be Hadamard matrices of order $n$.
Let $a_{ij}$ denote the $(i,j)$-entry of $HK^T$.
We say that $H,K$ are {\em Type~II weakly unbiased}
if $a_{ij} \equiv 0 \pmod 4$ for $i,j \in \{1,2,\ldots,n\}$ and
$|\{ |a_{ij}| \mid i,j \in \{1,2,\ldots, n\}\}| \le 2$.
For an even square $n$,
a pair of unbiased Hadamard matrices of order $n$ is
a pair of Type~II weakly unbiased 
Hadamard matrices.
Hence, the notion of
Type~II weakly unbiased Hadamard matrices of order $n$ is
some natural extension of the notion of unbiased 
Hadamard matrices for an even square $n$.
Similar to weakly unbiased Hadamard matrices,
in this paper, 
we exclude unbiased Hadamard matrices
from Type~II weakly unbiased Hadamard matrices.
It follows immediately from the definition that $n \ge 8$.

\subsection{Basic properties and feasible parameters}
Let $(H,K)$ be a pair of Type~II weakly unbiased
Hadamard matrices of order $n$.
Suppose that $a,b$ are positive integers satisfying
$\{ |a_{ij}| \mid i,j \in \{1,2,\ldots,n\}\}=\{a,b\}$.
We denote the set $\{a,b\}$ by $\sigma(H,K)$.
Let $n(a)$ be the number of components $j$ with $a_{ij}=\pm a$ 
for $i=1,2,\ldots,n$, where
$a_{ij}$ denotes the $(i,j)$-entry of $HK^T$.
Similar to weakly unbiased Hadamard matrices,
\eqref{eq:weak} holds.
From now on, we assume that $a < b$.
We say that parameters $(a,b)$ satisfying~\eqref{eq:weak}
are {\em feasible}.  
Since $(a,b,n(a))=(4,n/2-4,n-4)$ satisfies~\eqref{eq:weak},
the parameters $(a,b)=(4,n/2-4)$ are feasible for each order $n \equiv 0 \pmod 8$.

For $n=4,8,\ldots,48$, we give in Table~\ref{Tab:weakIIP}
feasible parameters $(a,b)$ along with $n(a)$ and 
our present state of knowledge about 
the maximum size $f_{max}$
among sets of mutually Type~II weakly unbiased Hadamard matrices of order $n$
for $(a,b,n(a))$.
In the third column of the table, ``-'' means that there exists no pair of
Type~II weakly unbiased Hadamard matrices.

\begin{table}[thb]
\caption{Type~II weakly unbiased Hadamard matrices  $(n=4,8,\ldots,48)$}
\label{Tab:weakIIP}
\begin{center}
{\footnotesize
\begin{tabular}{c|c|c|l|l}
\noalign{\hrule height0.8pt}
$n$ &$(a,b,n(a))$ & $f_{max}$ & \multicolumn{2}{c}{Reference}\\
\hline
24 & $(4, 8, 20)$& $2-42$ & Section~\ref{Subsec:weakIICon}& Table~\ref{Tab:weakIIUB}\\ 
\hline
28 & $(4, 8, 21)$& -  & & Section~\ref{Subsec:weakIICon}\\
\hline
32 & $(4, 12, 28)$& $4-264$ & Section~\ref{Subsec:weakIIcodeZ4code} &Table~\ref{Tab:weakIIUB}\\
\hline
36 & $(4, 8, 21)$& $2 - 72$ & Proposition~\ref{prop:weakII2} &Table~\ref{Tab:weakIIUB}\\ 
     & $(4, 16, 33)$& $\leq 10671$  & &Table~\ref{Tab:weakIIUB}\\
\hline
40 & $(4, 8, 20)$& $\leq 84$ & &Table~\ref{Tab:weakIIUB}\\
     & $(4, 16, 36)$& $\leq 16698$  & &Table~\ref{Tab:weakIIUB}\\
\hline
48 & $(4, 8, 16)$& $\leq 112$ & &Table~\ref{Tab:weakIIUB}\\
    & $(4, 12, 36)$& $2-194$  & Proposition~\ref{prop:weakII}&Table~\ref{Tab:weakIIUB}\\
    & $(4, 20, 44)$& $2-36034$ & Corollary~\ref{cor:weakII}& Table~\ref{Tab:weakIIUB}\\
    & $(4, 28, 46)$& $\leq36034$ & &Table~\ref{Tab:weakIIUB}\\
\noalign{\hrule height0.8pt}
\end{tabular}
}
\end{center}
\end{table}


\begin{proposition}
\label{prop:weakII}
Suppose that there exists a set of $f$
mutually unbiased Hadamard matrices of order $m$.
Assume that one of the following holds:
\begin{itemize}
\item[\rm (i)]
There exists a set of $f$ mutually 
weakly unbiased Hadamard matrices $H_1,H_2,\ldots,H_f$
of order $n$ with $\sigma(H_i,H_j)=\{a,b\}$ $(i,j \in \{1,2,\ldots,f\}$
and $i \ne j$).
\item[\rm (ii)]
There exists a set of $f$ mutually 
Type~II weakly unbiased Hadamard matrices $H_1,H_2,\ldots,H_f$
of order $n$ with $\sigma(H_i,H_j)=\{a,b\}$ $(i,j \in \{1,2,\ldots,f\}$
and $i \ne j$).
\end{itemize}
Then 
there exists a set of $f$ mutually 
Type~II weakly unbiased Hadamard matrices $L_1,L_2,\ldots,L_f$ of
order $mn$ with $\sigma(L_i,L_j)=\{\sqrt{m}a,\sqrt{m}b\}$ 
$(i,j \in \{1,2,\ldots,f\}$ and $i \ne j$).
\end{proposition}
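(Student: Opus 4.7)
The natural plan is to form $L_i = M_i \otimes H_i$, where $M_1, M_2, \ldots, M_f$ is the given set of mutually unbiased Hadamard matrices of order $m$. I would first verify that each $L_i$ is a Hadamard matrix of order $mn$, which is immediate from
\[
L_i L_i^T = (M_i M_i^T)\otimes(H_i H_i^T) = (m I_m)\otimes(n I_n) = mn\, I_{mn},
\]
together with the fact that a tensor product of two $(1,-1)$-matrices is again a $(1,-1)$-matrix.

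Next, for distinct $i,j$ I would compute
\[
L_i L_j^T = (M_i M_j^T)\otimes (H_i H_j^T).
\]
Because $M_i,M_j$ are unbiased, every entry of $M_i M_j^T$ has absolute value $\sqrt{m}$; because $\sigma(H_i,H_j)=\{a,b\}$, every entry of $H_i H_j^T$ has absolute value $a$ or $b$, with both values actually occurring (since we have excluded pairs that are unbiased). Hence every entry of $L_i L_j^T$ has absolute value $\sqrt{m}\,a$ or $\sqrt{m}\,b$, both values occur, and so $\sigma(L_i,L_j)=\{\sqrt{m}\,a,\sqrt{m}\,b\}$.

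It remains to verify that each entry of $L_i L_j^T$ is divisible by $4$. Here I would first note that for $f\ge 2$ the existence of mutually unbiased Hadamard matrices of order $m$ forces $\sqrt{m}$ to be a positive integer, since the entries of $M_i M_j^T$ are integers of common absolute value $\sqrt{m}$; moreover $m$ must be a multiple of $4$ (as a Hadamard matrix order $\ge 4$), and writing $m=k^2$, the fact that $k^2$ is divisible by $4$ forces $k=\sqrt{m}$ to be \emph{even}. (If $f=1$ there are no off-diagonal pairs and the conclusion is vacuous.) In case~(i), each entry of $H_i H_j^T$ is $\equiv 2\pmod 4$, in particular even, so its product with the even integer $\sqrt{m}$ is a multiple of $4$. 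In case~(ii), each entry of $H_i H_j^T$ is already a multiple of $4$, so its product with the integer $\sqrt{m}$ is a multiple of $4$. In either case $(L_i,L_j)$ is Type~II weakly unbiased with the claimed $\sigma$.

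The only delicate point in this argument is the divisibility step in case~(i); this is where the parity observation---that $\sqrt{m}$ is an even integer whenever mutually unbiased Hadamard matrices of order $m$ exist---is essential. Everything else reduces to routine bookkeeping with the tensor product $L_iL_j^T=(M_iM_j^T)\otimes(H_iH_j^T)$, so I do not expect any further obstacle.
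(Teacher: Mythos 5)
Your proof is correct and follows essentially the same route as the paper, which also takes the Kronecker product of each $H_i$ with a member of a set of mutually unbiased Hadamard matrices of order $m$ and reads off $\sigma(L_i,L_j)=\{\sqrt{m}a,\sqrt{m}b\}$ from $L_iL_j^T=(M_iM_j^T)\otimes(H_iH_j^T)$. In fact you supply a detail the paper leaves implicit, namely the verification that in case (i) the entries become $\equiv 0\pmod 4$ because $\sqrt{m}$ is necessarily an even integer.
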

\begin{proof}
It is sufficient to give a proof for the case $f=2$.
Let $(H',K')$ be a pair of unbiased Hadamard matrices of order $m$.
Then 
$(H_1 \otimes H', H_2 \otimes K')$
is a pair of Type~II weakly unbiased Hadamard matrices of
order $mn$ with 
$\sigma(H_1 \otimes H', H_2 \otimes K')=\{\sqrt{m}a,\sqrt{m}b\}$.
\end{proof}

As an example, a pair of Type~II weakly unbiased Hadamard matrices 
$L_1,L_2$ of order $48$ with $\sigma(L_1,L_2)=\{4,12\}$
is constructed from 
a pair of weakly unbiased Hadamard matrices $H_1,K_1$ of
order $12$ with $\sigma(H_1,K_1)=\{2,6\}$ (see Table~\ref{Tab:weakP})
and a pair of unbiased Hadamard matrices of order $4$.

\begin{corollary}\label{cor:weakII}
If there exists a Hadamard matrix of order $n \ge 8$,
then there exists a pair of Type~II weakly unbiased Hadamard matrices 
$H,K$ of order $4n$ with $\sigma(H,K)=\{4,2n-4\}$.
\end{corollary}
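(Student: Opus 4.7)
The plan is to combine Proposition~\ref{prop:weak} and Proposition~\ref{prop:weakII}(i) in the simplest possible way, taking $m=4$ and $f=2$ throughout. First, I would invoke Proposition~\ref{prop:weak}: since a Hadamard matrix of order $n \ge 8$ is assumed to exist, there is a pair $H_1,H_2$ of weakly unbiased Hadamard matrices of order $n$ with $\sigma(H_1,H_2)=\{2,n-2\}$. So $(a,b)=(2,n-2)$ in the notation of Proposition~\ref{prop:weakII}.

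Next, I would supply the unbiased ingredient of order $m=4$. The excerpt already records that there exists a pair of unbiased Hadamard matrices of order $4$ (cited from~\cite[Proposition~6]{CS73}), which is precisely a set of $f=2$ mutually unbiased Hadamard matrices of order $4$. This satisfies the hypothesis of Proposition~\ref{prop:weakII}.

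Applying case~(i) of Proposition~\ref{prop:weakII} with these two pieces of data, I obtain a pair $L_1,L_2$ of Type~II weakly unbiased Hadamard matrices of order $mn = 4n$ with
\[
\sigma(L_1,L_2) = \{\sqrt{m}\,a,\sqrt{m}\,b\} = \{2\cdot 2,\ 2(n-2)\} = \{4,\ 2n-4\},
\]
which is exactly the conclusion sought.

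There is essentially no obstacle here: the statement is a direct specialization of Proposition~\ref{prop:weakII}(i) to the case where the order-$m$ unbiased family and the order-$n$ weakly unbiased family both have size $2$, with $m=4$. The only thing to verify is the arithmetic $\sqrt{4}\cdot 2 = 4$ and $\sqrt{4}\cdot(n-2) = 2n-4$, and to note that the hypothesis $n \ge 8$ is needed solely to invoke Proposition~\ref{prop:weak} (which itself requires $n \ge 8$ to ensure the resulting pair is weakly unbiased rather than unbiased). Accordingly, I expect the written proof to be a one- or two-sentence composition of the two cited propositions.
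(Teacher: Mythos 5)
Your proposal is correct and is exactly the paper's own proof: Proposition~\ref{prop:weak} supplies the weakly unbiased pair of order $n$ with $\sigma=\{2,n-2\}$, and Proposition~\ref{prop:weakII}(i) with the unbiased pair of order $4$ yields the Type~II pair of order $4n$ with $\sigma=\{4,2n-4\}$. No differences worth noting.
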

\begin{proof}
Suppose that there exists a Hadamard matrix of order $n \ge 8$.
By Proposition~\ref{prop:weak}, 
there exists a pair of weakly unbiased Hadamard matrices 
$H,K$ of order $n$ with $\sigma(H,K)=\{2,n-2\}$.
Since there exists a pair of unbiased Hadamard matrices of order $4$,
the result follows from Proposition~\ref{prop:weakII}.
\end{proof}

Similar to Proposition~\ref{prop:weak2}, we 
immediately have the following:

\begin{proposition}\label{prop:weakII2}
Suppose that $n=4k^2$, where $k$ is odd.
If there exists a pair of unbiased Hadamard matrices of order $n$, 
then there exists a pair of Type~II 
weakly unbiased Hadamard matrices $H,K$ of order $n$ with 
$\sigma(H,K)=\{\sqrt{n}-2,\sqrt{n}+2\}$.
\end{proposition}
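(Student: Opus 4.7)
The plan is to mimic the proof of Proposition~\ref{prop:weak2} essentially verbatim, with only a change in the parity analysis at the very end. Given a pair $(H,H')$ of unbiased Hadamard matrices of order $n$, I would let $K$ be the Hadamard matrix obtained from $H'$ by negating its first column, writing $H$ and $H'$ in the block form used in the discussion preceding Proposition~\ref{prop:weak}. Then~\eqref{eq:HH'} yields
\[
HK^T = HH'^T + \begin{pmatrix}-2aa' & -2ax'^T \\ -2a'x & -2xx'^T\end{pmatrix}.
\]
Every entry of the correction matrix has absolute value $2$, while every entry of $HH'^T$ has absolute value $\sqrt{n}$ because $H,H'$ are unbiased. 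Hence each entry $a_{ij}$ of $HK^T$ lies in $\{\pm\sqrt{n}\pm 2\}$, so $|a_{ij}|\in\{\sqrt{n}-2,\sqrt{n}+2\}$.

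The only remaining point is the congruence defining Type~II weakly unbiased, namely $a_{ij}\equiv 0\pmod 4$. Since $n=4k^2$ with $k$ odd, we have $\sqrt{n}=2k\equiv 2\pmod 4$, and therefore $\sqrt{n}\pm 2\equiv 0\pmod 4$. This is exactly the parity swap from the even-$k$ case: in Proposition~\ref{prop:weak2} one had $\sqrt{n}=2k\equiv 0\pmod 4$ and thus $\sqrt{n}\pm 2\equiv 2\pmod 4$, which gave the weakly unbiased condition, whereas here odd $k$ produces the Type~II condition. Since $k\ge 3$ in the relevant range (for $k=1$ the order $n=4$ is below the assumed threshold $n\ge 8$ for Type~II weakly unbiased matrices), both values $\sqrt{n}-2$ and $\sqrt{n}+2$ are positive and distinct, so $|\{|a_{ij}|\}|=2$ and the pair is genuinely not unbiased, matching the exclusion convention at the start of Section~\ref{sec:II}.

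There is no real obstacle here: the whole argument is formally identical to that of Proposition~\ref{prop:weak2}, and the only nontrivial work has already been done, namely establishing the identity~\eqref{eq:HH'} and observing that it decomposes $HK^T$ into a rank-structured $\pm\sqrt{n}$ matrix plus a $\pm 2$ correction. The switch from weakly unbiased to Type~II weakly unbiased is driven entirely by the residue of $2k$ modulo $4$, so toggling the parity of $k$ in the hypothesis mechanically toggles which of the two definitions is met by the output.
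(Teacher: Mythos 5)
Your proposal is correct and is essentially identical to the paper's argument: the paper gives no separate proof, stating only ``Similar to Proposition~\ref{prop:weak2}, we immediately have the following,'' and the intended proof is exactly your route via~\eqref{eq:HH'} with the observation that $\sqrt{n}=2k\equiv 2\pmod 4$ for odd $k$, so $\sqrt{n}\pm 2\equiv 0\pmod 4$. Your explicit parity check and the remark on $k=1$ are fine elaborations of what the paper leaves implicit.
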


As an example,
a pair of Type~II weakly unbiased Hadamard matrices $H,K$ of order 
$36$ with $\sigma(H,K)=\{4,8\}$ is constructed from 
that of unbiased Hadamard matrices of order $36$ given in~\cite{HKO}.

\subsection{Observations by straightforward construction}
\label{Subsec:weakIICon}


Under the condition~\eqref{eq:3col} on $K$, 
our computer search found a Hadamard matrix $\overline{K_{24,4}}$ such that
$(H_{24,4}, \overline{K_{24,4}})$ is a pair of
Type~II weakly unbiased Hadamard matrices with
$\sigma(H_{24,4}, \overline{K_{24,4}})=\{4,8\}$,
where $H_{24,4}$ is \verb+had.24.49+ in~\cite{Hadamard}.
The matrix $\overline{K_{24,4}}$ is listed in Figure~\ref{Fig:24weakII}.

\begin{figure}[thb]
\centering
{\scriptsize
\[
\overline{K_{24,4}}=
\left( \begin{array}{c}
+++-+-+----+-++-----+-+-\\
+++--+-+-+--+-+-+-+++++-\\
+++-+-+----++--+++++-+-+\\
++++-++++-++-+--++++--+-\\
++++-++++-+++-++----++-+\\
+++--+-+-+---+-+-+-----+\\
++-+--+--++-++--++--++++\\
++-+--+--++---++--++----\\
++-++--++----++++++-++--\\
++--++--+++++-++++----+-\\
++--++--++++-+----++++-+\\
++-++--++---+------+--++\\
+-++----++-+----+----+--\\
+-+-+-+++++-+++-+-+----+\\
+-+-+-+++++----+-+-++++-\\
+-++++----+-+----++-+---\\
+-++----++-+++++-++++-++\\
+-++++----+--++++--+-+++\\
+------+--++--+--++--+++\\
+--+++++-+-++++--+-+-+--\\
+----++-+---++-+--+--++-\\
+----++-+-----+-++-++--+\\
+--+++++-+-+---++-+-+-++\\
+------+--++++-++--++---\\
\end{array}
\right)
\]
}
\caption{The matrix $\overline{K_{24,4}}$}
\label{Fig:24weakII}
\end{figure}

By a computer calculation, we verified that
each of the four induced subgraphs on $V_j$ $(j=1,2,3,4)$ of 
$\Gamma(H_{28,i},\{4,8\})$ 
contains a $7$-clique for $355$ Hadamard matrices.  
In addition, we verified that
the induced subgraph on $V_1 \cup V_2$ 
of only $\Gamma(H_{28,484},\{4,8\})$ 
contains a $14$-clique among the 355 graphs $\Gamma(H_{28,i},\{4,8\})$.
By a computer calculation, we obtained that
the size of the maximum cliques of $\Gamma(H_{28,484},\{4,8\})$
is $24$.
Hence, there exists no pair 
of Type~II weakly unbiased Hadamard matrices of order $28$ 
with $\sigma(H,K)=\{4,8\}$.

\subsection{A coding-theoretic approach}
\label{Subsec:weakIIcode}

Similar to Theorems~\ref{thm:F2} and~\ref{thm:weakF2},
we have the following coding-theoretic approach to 
Type~II weakly unbiased Hadamard matrices.

\begin{theorem}\label{thm:weakIIF2}
Let $a, b$ be even integers with $0<a < b < n/2$.
There exists  a self-complementary
$(n,2fn)$ code $C$ satisfying the following conditions:
\begin{align}
\label{eq:weakIIB1}
&\{i \in \{0,1,\ldots,n\} \mid A_i(C) \ne 0\}=\{0,n/2 \pm a,n/2 \pm b,n/2,n\},\\
\label{eq:weakIIB2}
&A_{n/2}(C)=2n-2, \\
\label{eq:weakIIB3}
&C=C_1 \cup C_2 \cup \cdots\cup C_f,
\end{align}
where 
$C_i$ has distance distribution $(A_0(C_i),A_{n/2}(C_i),A_n(C_i))=(1,2n-2,1)$
if and only if
there exists a set of $f$ Type~II weakly unbiased Hadamard matrices $H,K$ with 
$\sigma(H,K)=\{2a,2b\}$.
\end{theorem}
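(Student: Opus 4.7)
The plan is to imitate the proof of Theorem~\ref{thm:F2}, adapting it to handle two cross-distance values $n/2\pm a,n/2\pm b$ and the extra condition~\eqref{eq:weakIIB2}. For the ``if'' direction, given $C$ satisfying \eqref{eq:weakIIB1}--\eqref{eq:weakIIB3}, I would define the sign map $\psi:\ZZ_2^n\to\{1,-1\}^n$ as in Theorem~\ref{thm:F2}. Each $C_i$ has distance distribution $(1,2n-2,1)$, so $C_i+\mathbf{1}=C_i$ and $\psi(C_i)$ is antipodal; choose $X_i\subset\psi(C_i)$ with $X_i\cup(-X_i)=\psi(C_i)$ and $X_i\cap(-X_i)=\emptyset$. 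The internal distances $\{0,n/2,n\}$ of $C_i$ force the elements of $X_i$ to be pairwise orthogonal, so $X_i$ is the row set of a Hadamard matrix $H_i$ of order $n$.

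The key new ingredient is the role of~\eqref{eq:weakIIB2}. Since the $f$ subcodes $C_i$ must be pairwise disjoint (forced by $|C|=2fn=\sum_i|C_i|$), internal pairs already account for $f\cdot 2n(2n-2)$ ordered pairs at distance $n/2$, which matches the total $|C|\,A_{n/2}(C)=2fn(2n-2)$. Hence no cross pair with one codeword in $C_i$ and the other in $C_j$ ($i\neq j$) is at distance $n/2$, and cross distances lie only in $\{n/2\pm a,n/2\pm b\}$. Translating via $v_i\cdot v_j=n-2d(\psi^{-1}(v_i),\psi^{-1}(v_j))$, distinct rows drawn from $X_i$ and $X_j$ have inner products in $\{\pm 2a,\pm 2b\}$. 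Since $a,b$ are even, these values are all $\equiv 0\pmod 4$, and $\{H_1,\ldots,H_f\}$ is a set of mutually Type~II weakly unbiased Hadamard matrices with $\sigma(H_i,H_j)=\{2a,2b\}$. For the converse, I would take $C_i=\psi^{-1}(\text{rows of }\pm H_i)$ and $C=\bigcup_i C_i$; the constraint $2a,2b<n$ prevents any row of $H_i$ from coinciding with a row of $\pm H_j$ for $i\neq j$, so the $C_i$ are pairwise disjoint, $|C|=2fn$, and reversing the inner-product-to-distance dictionary yields~\eqref{eq:weakIIB1}--\eqref{eq:weakIIB3}.

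I do not expect a substantial obstacle, since the proof largely parallels those of Theorems~\ref{thm:F2} and~\ref{thm:weakF2}. The only new point requiring care is the counting identity that invokes~\eqref{eq:weakIIB2} to rule out cross pairs at distance $n/2$, which is precisely what ensures that $H_iH_j^T$ has no zero entry and hence that exactly the two absolute values $\{2a,2b\}$ occur, as required for the Type~II weakly unbiased condition rather than the quasi-unbiased one.
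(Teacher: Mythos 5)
Your proposal is correct and follows essentially the same route as the paper, which proves this theorem by direct analogy with Theorems~\ref{thm:F2} and~\ref{thm:weakF2} and remarks only that the condition $A_{n/2}(C)=2n-2$ corresponds to $H_iH_j^T$ having no zero entry. Your counting identity $f\cdot 2n(2n-2)=|C|\,A_{n/2}(C)$ is exactly the right way to make that remark precise, and the rest of your argument matches the paper's template.
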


Similar to Lemma~\ref{lem:bound}, 
as the case $s=6$ of Theorems~\ref{thm:SCbound} and~\ref{thm:SCLP},
we have two upper bounds
on the number of the codewords of
self-complementary codes satisfying~\eqref{eq:weakIIB1}.

\begin{lemma}\label{lem:weakIIbound}
Let $C$ be a self-complementary code of length $n$ 
satisfying~\eqref{eq:weakIIB1}.
Then
\begin{enumerate}[\rm (i)]
\item $|C|\leq 2(\binom{n}{5}+\binom{n}{3}+\binom{n}{1})$.
\item If 
$15n^2-30n+16-4(3n-2)(a^2+b^2)+16a^2b^2>0$ and
$5(n-2)-2a^2-2b^2\ge 0$,
then $|C|\leq \lfloor
\frac{2n(n^2-4a^2)(n^2-4b^2)}{15n^2-30n+16-4(3n-2)(a^2+b^2)+16a^2b^2}
\rfloor$. 
\end{enumerate}
\end{lemma}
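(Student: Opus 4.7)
The plan is to apply Theorems~\ref{thm:SCbound} and~\ref{thm:SCLP} with $s=6$, which is the degree of $C$ in view of~\eqref{eq:weakIIB1} (the degree set being $S(C)=\{n/2-b,\,n/2-a,\,n/2,\,n/2+a,\,n/2+b,\,n\}$). Since $s$ is even, Theorem~\ref{thm:SCbound} immediately gives~(i): the admissible indices range over $i\in\{1,3,5\}$, yielding $|C|\le 2\bigl(\binom{n}{1}+\binom{n}{3}+\binom{n}{5}\bigr)$.

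For~(ii), I would compute the explicit Krawtchouk expansion~\eqref{eq:alpha1} of $\overline{\alpha}_C(z)$, which by Lemma~\ref{lem:eq} has the form $\alpha_1K_1(z)+\alpha_3K_3(z)+\alpha_5K_5(z)$. Pairing the factors of $\overline{\alpha}_C(z)$ symmetrically around $n/2$ and setting $x=n-2z$, as in the proof of Lemma~\ref{lem:eq}, gives
\[
\overline{\alpha}_C(z)=\frac{x(x^2-4a^2)(x^2-4b^2)}{n(n^2-4a^2)(n^2-4b^2)}=\frac{x^5-4(a^2+b^2)x^3+16a^2b^2\,x}{n(n^2-4a^2)(n^2-4b^2)}.
\]
Using the recursion~\eqref{eq:kp}, one computes $K_1(z)=x$, $K_3(z)=(x^3-(3n-2)x)/6$ and $K_5(z)=(x^5-10(n-2)x^3+(15n^2-50n+24)x)/120$. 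Matching the coefficients of $x^5$, $x^3$ and $x$ then produces closed forms; in particular, $\alpha_5=120/(n(n^2-4a^2)(n^2-4b^2))$ is automatically positive, while
\[
\alpha_3=\frac{12\bigl(5(n-2)-2(a^2+b^2)\bigr)}{n(n^2-4a^2)(n^2-4b^2)},\qquad \alpha_1=\frac{15n^2-30n+16-4(3n-2)(a^2+b^2)+16a^2b^2}{n(n^2-4a^2)(n^2-4b^2)}.
\]
Since $0<a<b<n/2$, the common denominator is positive, so the two hypotheses of~(ii) translate into $\alpha_1>0$ and $\alpha_3\ge 0$. With $\alpha_\delta=\alpha_1$ (because $s=6$ is even), Theorem~\ref{thm:SCLP} yields $|C|\le\lfloor 2/\alpha_1\rfloor$, which is exactly the stated bound.

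The main obstacle is the algebraic bookkeeping required to derive the explicit formulas for $\alpha_1$ and $\alpha_3$ and to verify that the two stated hypotheses correspond precisely to the inequalities $\alpha_1>0$ and $\alpha_3\ge 0$. Once the recursion~\eqref{eq:kp} is used to express $K_1$, $K_3$ and $K_5$ as polynomials in $x=n-2z$, the remainder is elementary coefficient-matching, so the bulk of the argument is the symbolic calculation outlined above.
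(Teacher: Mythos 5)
Your proposal is correct and follows essentially the same route as the paper: part (i) is the $s=6$ case of Theorem~\ref{thm:SCbound}, and for part (ii) the paper likewise expands $\overline{\alpha}_C(z)$ in Krawtchouk polynomials, obtains exactly the coefficients $\alpha_1,\alpha_3,\alpha_5$ you computed, observes that the hypotheses are precisely $\alpha_1>0$ and $\alpha_3\ge 0$ (with $\alpha_5>0$ automatic), and applies Theorem~\ref{thm:SCLP}.
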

\begin{proof}
(i)
The upper bound is the case $s=6$ of Theorem~\ref{thm:SCbound}.

(ii)
Expanding by the Krawtchouk polynomials, we have 
\begin{align*}
\overline{\alpha}_C(z)=
&\Big(1-\frac{2z}{2a+n}\Big)\Big(1-\frac{2z}{2b+n}\Big)\Big(1-\frac{2z}{n}\Big)
\Big(1-\frac{2z}{-2a+n}\Big)\Big(1-\frac{2z}{-2b+n}\Big)
\\
=&\frac{15n^2-30n+16-4(3n-2)(a^2+b^2)+16a^2b^2}{n(n^2-4a^2)(n^2-4b^2)}K_1(z)\\
&+\frac{12(5n-10-2a^2-2b^2)}{n(n^2-4a^2)(n^2-4b^2)}K_3(z)+\frac{120}{n(n^2-4a^2)(n^2-4b^2)}K_5(z)\\
=&\alpha_1K_1(z)+\alpha_3K_3(z)+\alpha_5K_5(z) \text{ (say)}.
\end{align*}
The assumption on $a,b$ and $n$ yields that
$\alpha_1$ is positive and $\alpha_3,\alpha_5$ are nonnegative. 
Therefore, Theorem~\ref{thm:SCLP} implies the desired bound.
\end{proof}


%

Similar to Theorem~\ref{thm:bound}, 
by Theorem~\ref{thm:weakIIF2}, 
we immediately have the following two upper bounds on 
the maximum size among sets of 
mutually
Type~II weakly unbiased Hadamard matrices,
one of which
depends only on $n$, and
the other depends on $n,\alpha$.  
This is also one of the main results of this paper.

\begin{theorem}\label{thm:weakIIbound}
Suppose that there exists a set of $f$ mutually
Type~II weakly unbiased Hadamard matrices $H,K$ of order $n$
with $\sigma(H,K)=\{a,b\}$.
Then
\begin{enumerate}[\rm (i)]
\item $f \leq \lfloor \frac{n^4- 10n^3 + 55n^2 - 110n +184}{5!}\rfloor $. 
\item If 
$15n^2-30n+16-4(3n-2)(a^2+b^2)+16a^2b^2>0$ and
$5(n-2)-2a^2-2b^2\ge 0$,
then $f \leq \lfloor 
\frac{(n^2-4a^2)(n^2-4b^2)}{15n^2-30n+16-4(3n-2)(a^2+b^2)+16a^2b^2}
\rfloor$. 
\end{enumerate}
\end{theorem}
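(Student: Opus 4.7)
The plan is to derive Theorem~\ref{thm:weakIIbound} as a direct corollary of the coding-theoretic characterization in Theorem~\ref{thm:weakIIF2} combined with the two bounds for self-complementary codes established in Lemma~\ref{lem:weakIIbound}, mirroring the derivation of Theorem~\ref{thm:bound} from Theorem~\ref{thm:F2} and Lemma~\ref{lem:bound}.

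First, starting from a set of $f$ mutually Type~II weakly unbiased Hadamard matrices of order $n$ with $\sigma(H_i,H_j)=\{a,b\}$, I would invoke Theorem~\ref{thm:weakIIF2} to produce a self-complementary $(n,2fn)$ code $C$ satisfying the conditions~\eqref{eq:weakIIB1}--\eqref{eq:weakIIB3}, with the relevant parameters in~\eqref{eq:weakIIB1} determined by $a$ and $b$ (tracked through the correspondence just as in the proof of Theorem~\ref{thm:bound}). The distance support of $C$ consists of exactly six indices, namely $0$, $n/2\pm a/2$, $n/2\pm b/2$, $n/2$, and $n$, so $C$ is a self-complementary code of degree $s=6$, precisely the setting to which Lemma~\ref{lem:weakIIbound} applies.

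Next, I would substitute $|C|=2fn$ into each of the two bounds of Lemma~\ref{lem:weakIIbound} and divide through by $2n$. For part~(i), the inequality $2fn\le 2\bigl(\binom{n}{1}+\binom{n}{3}+\binom{n}{5}\bigr)$ from Lemma~\ref{lem:weakIIbound}(i) yields the stated absolute bound once one verifies the polynomial identity $\frac{1}{n}\bigl(\binom{n}{1}+\binom{n}{3}+\binom{n}{5}\bigr)=\frac{n^4-10n^3+55n^2-110n+184}{5!}$, which follows from expanding $(n-1)(n-2)(n-3)(n-4)+20(n-1)(n-2)+120$ and collecting terms. For part~(ii), Lemma~\ref{lem:weakIIbound}(ii) directly gives $2fn\le\lfloor 2n(n^2-4a^2)(n^2-4b^2)/D\rfloor$ under the stated positivity hypotheses, with $D=15n^2-30n+16-4(3n-2)(a^2+b^2)+16a^2b^2$, from which the linear programming bound on $f$ follows immediately upon division.

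Both ingredients are already in hand, so no serious obstacle is expected; the only computational content is the routine polynomial expansion noted above. If anything, the mildly delicate point is bookkeeping: one must reconcile the parameter conventions in Theorem~\ref{thm:weakIIF2} (where $\sigma(H,K)=\{2a,2b\}$ corresponds to code distances $n/2\pm a$ and $n/2\pm b$) with those in the statement of Theorem~\ref{thm:weakIIbound}, after which the two displays in the theorem reduce verbatim to the two displays of Lemma~\ref{lem:weakIIbound}.
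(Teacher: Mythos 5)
Your proposal is correct and follows exactly the route the paper intends (the paper gives no explicit proof, stating only that the theorem follows from Theorem~\ref{thm:weakIIF2} "similar to Theorem~\ref{thm:bound}"): apply the coding-theoretic equivalence to get a degree-$6$ self-complementary $(n,2fn)$ code, then divide the two bounds of Lemma~\ref{lem:weakIIbound} by $2n$, and your polynomial identity for part~(i) checks out. You also correctly flag the one real subtlety, namely that the $a,b$ appearing in the displayed formulas are the code-distance offsets of Lemma~\ref{lem:weakIIbound} (half the entries of $HK^T$), which is the convention the paper's Table~\ref{Tab:weakIIUB} actually uses despite the statement writing $\sigma(H,K)=\{a,b\}$.
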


For the feasible parameters given in Table~\ref{Tab:weakIIP}, 
we list in Table~\ref{Tab:weakIIUB}
the maximum possible sizes among sets of mutually Type~II weakly unbiased 
Hadamard  matrices, which are obtained by the
two upper bounds.
We do no list the maximum possible sizes when
there exists no pair of Type~II weakly unbiased Hadamard matrices.
In the table, ``$*$'' means that the assumption of 
Theorem~\ref{thm:weakIIbound} (ii) is not satisfied.

\begin{table}[thb]
\caption{Absolute and linear programming bounds in Theorem~\ref{thm:weakIIbound}}
\label{Tab:weakIIUB}
\begin{center}
{\footnotesize
\begin{tabular}{c|c|c|c}
\noalign{\hrule height0.8pt}
$n$ &$(a,b,n(a))$ & Absolute bound  & Linear programming bound \\
\hline
24 & $(4, 8, 20)$& $\lfloor5569/3\rfloor=1856$  & $\lfloor 256/3\rfloor=85$
\\ 
\hline
32 & $(4, 12, 28)$& $6449$ & $528$\\
\hline
36 & $(4, 8, 21)$& $\lfloor32014/3\rfloor=10671$ &  $\lfloor 5632/39\rfloor=144$
\\ 
     & $(4, 16, 33)$& $10671$  & $*$ \\
\hline
40 & $(4, 8, 20)$& $\lfloor83491/5\rfloor=16698$ &  $\lfloor 4224/25\rfloor=168$
 \\
     & $(4, 16, 36)$& $16698$  & $*$ \\
\hline
48 & $(4, 8, 16)$& $\lfloor108103/3\rfloor=36034$ &  $\lfloor 64064/285\rfloor=224$ \\
    & $(4, 12, 36)$& $36034$ &  $\lfloor 20592/53\rfloor=388$ \\
    & $(4, 20, 44)$& $36034$ & $*$\\
    & $(4, 28, 46)$& $36034$ & $*$ \\
\noalign{\hrule height0.8pt}
\end{tabular}
}
\end{center}
\end{table}

%

\subsection{Binary codes satisfying~\eqref{eq:weakIIB1}--\eqref{eq:weakIIB3}}
\label{Subsec:weakIIcodeZ4code}
In the process of the classification of codes of length $32$
satisfying~\eqref{eq:weakB1}--\eqref{eq:weakB3}
with $C_1=RM(1,5)$,
our exhaustive computer search verified that
there exists no $[32,7]$ code 
satisfying~\eqref{eq:weakIIB1}--\eqref{eq:weakIIB3}
with $C_1=RM(1,5)$.
%

Suppose that $\cC$ is a linear $\ZZ_4$-code   of length $n=2^m$
satisfies the following conditions:
\begin{align}
\label{eq:Z4-weakII1}
&\{(n_0(x)-n_2(x))^2\mid x\in \cC\}=\{0,a^2,b^2,n^2\},
\\
\label{eq:Z4-weakII2}
&|\{x \in \cC \mid n_0(x)=n_2(x)\}|=4n-2,\\
\label{eq:Z4-weakII3}
&\text{$\cC$  contains $ZRM(1,m)$ as a subcode},
\end{align}
where $a,b$ are even integers with $0<a<b<n$.
Then $\phi(\cC)$ satisfies~\eqref{eq:weakIIB1}--\eqref{eq:weakIIB3}.

In the process of verifying that there exists no linear $\ZZ_4$-code of 
length $16$ satisfying~\eqref{eq:Z4-weak1}--\eqref{eq:Z4-weak3},
our computer calculation completed the classification of linear $\ZZ_4$-code of 
length $16$ satisfying~\eqref{eq:Z4-weakII1}--\eqref{eq:Z4-weakII3}.
We give the numbers
$N'_4(16,k)$ of inequivalent linear $\ZZ_4$-codes $\cC$ of length $16$ with 
$|\cC|=2^k$
satisfying~\eqref{eq:Z4-weakII1}--\eqref{eq:Z4-weakII3}.

\begin{proposition}
$N'_4(16,7)=1$,
$N'_4(16,8)=3$ and 
$N'_4(16,9)=0$.
\end{proposition}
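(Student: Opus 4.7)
The plan is to apply, essentially verbatim, the exhaustive extension-and-sift procedure described in Section~\ref{Subsec:Z4} (used to establish Proposition~\ref{prop:16-2}), replacing the classification conditions \eqref{eq:Z4C1-2} and \eqref{eq:Z4C2-2} by the present conditions \eqref{eq:Z4-weakII1}--\eqref{eq:Z4-weakII3}. Both sets of conditions are invariant under equivalence of linear $\ZZ_4$-codes and both force the code to contain $ZRM(1,4)$, so the recursive construction of $2^{k+1}$-size codes as one-generator extensions $\langle \cC,x\rangle$ of inequivalent $2^k$-size codes $\cC$, with $x$ ranging over a set of complete representatives of $\ZZ_4^{16}/\cC$, applies unchanged. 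Equivalence among candidate extensions will be tested via Proposition~\ref{prop:Z4}, by constructing the digraphs $\Gamma(\cC)$ and calling {\sc Magma}'s {\tt IsIsomorphic}.

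First I would initialize the list at level $k=6$ with the single code $ZRM(1,4)$. Then, for $k=7,8,9$ in turn, I would form all candidate extensions $\langle \cC, x\rangle$ of every code $\cC$ currently in the list, retain only those of cardinality $2^k$ which satisfy \eqref{eq:Z4-weakII1}--\eqref{eq:Z4-weakII3} for some pair of even integers $0<a<b<16$, and reduce the surviving set modulo digraph isomorphism. Running this to completion, I expect to recover exactly $1$ code at $k=7$, $3$ codes at $k=8$, and $0$ codes at $k=9$, which is the content of the proposition. In particular the stated counts $N'_{4}(16,7)=1$, $N'_{4}(16,8)=3$ fall out as outputs of the first two stages, and $N'_{4}(16,9)=0$ is forced by the emptiness of the third stage.

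The main obstacle is combinatorial cost rather than mathematical depth: the coset space $\ZZ_4^{16}/ZRM(1,4)$ has size $2^{26}$, so naive enumeration of every representative for the first extension step is infeasible. The remedy, already implicit in Section~\ref{Subsec:Z4}, is twofold. First, rejection on the fly: as soon as a candidate extension acquires a codeword producing a third distinct nonzero value of $(n_0(x)-n_2(x))^2$ strictly less than $n^2=256$, or once the count in \eqref{eq:Z4-weakII2} is exceeded, the candidate is discarded, which typically kills candidates after only a handful of codewords are inspected. Second, one exploits the stabilizer of $ZRM(1,4)$ inside the $\ZZ_4$-monomial group to reduce the coset representatives to orbit representatives, so that essentially different extensions are enumerated only once. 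With these two prunings in place, the verification that $N'_4(16,9)=0$ reduces to a finite and explicit check: for each of the three $k=8$ codes, every one-generator extension is shown to violate at least one of \eqref{eq:Z4-weakII1}--\eqref{eq:Z4-weakII3}, exhibited by a single offending codeword in each case.
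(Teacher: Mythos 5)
Your proposal is correct and is essentially the paper's own argument: the authors establish this proposition by exactly the extension-and-sift computer classification of Section~\ref{Subsec:Z4} (iterated one-generator extensions of $ZRM(1,4)$, filtered by conditions \eqref{eq:Z4-weakII1}--\eqref{eq:Z4-weakII3} and reduced modulo equivalence via the digraph test of Proposition~\ref{prop:Z4} and {\sc Magma}'s {\tt IsIsomorphic}), obtained as a byproduct of the analogous search in Section~\ref{Subsec:weakB}. Your added remarks on on-the-fly rejection and orbit reduction are implementation details consistent with, though not spelled out in, the paper.
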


The unique linear $\ZZ_4$-code $\cC=\cC'_{16,1}$ with $|\cC|=2^7$
is constructed as
$\langle ZRM(1,4), x_1 \rangle$,
where 
$x_1= (0, 0, 0, 2, 1, 1, 1, 3, 1, 3, 1, 3, 0, 0, 0, 0)$.
The three linear $\ZZ_4$-codes $\cC=\cC'_{16,2,i}$ with $|\cC|=2^8$
$(i=1,2,3)$ are constructed as
$\langle ZRM(1,4), x_1,x_{2,i} \rangle$,
where 
\begin{align*}
x_{2,1}=&( 0, 0, 1, 3, 0, 2, 3, 3, 1, 3, 0, 0, 1, 3, 0, 0),\\
x_{2,2}=&( 0, 0, 1, 3, 0, 0, 3, 3, 1, 3, 2, 0, 1, 3, 0, 0),\\
x_{2,3}=&( 0, 0, 1, 1, 0, 0, 3, 3, 1, 3, 0, 2, 3, 3, 0, 0).
\end{align*}
This gives a set of four mutually 
Type~II weakly unbiased Hadamard matrices of order $32$
with $\sigma(H, K)=\{4,12\}$
by Theorem~\ref{thm:weakIIF2}.
By a computer calculation, we
verified that
the above linear $\ZZ_4$-codes $\cC$ have $(a^2,b^2)=(4, 36)$
and have $(d_H(\cC),d_L(\cC))=( 8, 10)$.

\bigskip
\noindent {\bf Acknowledgments.}
The authors would like to thank the anonymous referee for useful comments.
In this work, the supercomputer of ACCMS, Kyoto University was partially used.
This work is supported by JSPS KAKENHI Grant Numbers 23340021,
26610032.

\appendix
\def\thesection{Appendix \Alph{section}}

\section{} \label{appendix}

In Lemma~\ref{lem:bound},
we did not give a detail proof of the fact that
$(C,\{R_i\}_{i=0}^4)$ is a $Q$-polynomial association scheme
when $|C|=\frac{2n(n^2-4\alpha^2)}{3n-2-4\alpha^2}$.
In this appendix, we give a detailed proof.

Suppose that $C$ and $R_i$ are as given in Lemma~\ref{lem:bound}.
Assume that $|C|=\frac{2n(n^2-4\alpha^2)}{3n-2-4\alpha^2}$.
For $i\in\{0,1,\ldots,4\}$, $A_i$ denotes the adjacency matrix of 
the graph with vertex set $C$ and edge set $R_i$.
Let $\mathcal{A}$ denote the
vector space over $\mathbb{R}$ spanned by 
$A_0=I_{|C|},A_1,\ldots,A_4$, which forms an algebra.
Let $\{E_0,E_1,\ldots,E_4\}$ denote the set of the primitive idempotents
of $\mathcal{A}$.
Then the matrix $P=(p_{ij})$ is defined by 
$A_i=\sum_{j=0}^4 p_{ji}E_i$.

\def\thesection{A}
\begin{lemma}
$(C,\{R_i\}_{i=0}^4)$ is a symmetric association scheme.
\end{lemma}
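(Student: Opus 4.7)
The plan is to adapt the argument of \cite[Theorems~1.1 and~1.2(5)]{BB} from the spherical-design setting to the binary Hamming scheme. The first three axioms are immediate: $A_0=I_{|C|}$ by construction; $\sum_{i=0}^{4}A_i=J_{|C|}$ since the pairwise Hamming distances in $C\times C$ are exactly $\beta_0,\beta_1,\ldots,\beta_4$; and $A_i^T=A_i$ because the Hamming distance is symmetric. The substance of the proof lies in the multiplication rule $A_iA_j=\sum_{k}p_{ij}^{k}A_k$.

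The plan for this fourth axiom is to exhibit five mutually orthogonal idempotents inside the linear span $\mathcal{A}:=\mathrm{span}\{A_0,\ldots,A_4\}$. I would work inside the ambient binary Hamming scheme on $X=\ZZ_2^n$, with primitive idempotents $\mathcal{E}_0,\ldots,\mathcal{E}_n$ and characteristic matrices $G_0,\ldots,G_n$ of $C$ as in Section~\ref{sec:AS}. Tracing through the equality case of the linear programming bound, the Krawtchouk expansion~\eqref{eq:alpha1} together with $|C|=\frac{2n(n^2-4\alpha^2)}{3n-2-4\alpha^2}$ forces the dual inner distribution of $C$ to be supported on exactly five indices, which, by the self-complementary and $Q$-bipartite symmetry encoded in Lemma~\ref{lem:eq}, must take the antipodal-symmetric form $0<i_1<i_2<n-i_1<n$. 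Setting $F_j:=\frac{1}{|X|}G_{i_j}G_{i_j}^T$ and specializing the identity $K\Gamma K^T=|C'|I_{|C'|}$ from the proof of Theorem~\ref{thm:SCbound} (where now $K$ is square invertible and $\Gamma$ is positive diagonal), one deduces that $F_0,\ldots,F_4$ are mutually orthogonal projections summing to $I_{|C|}$.

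Since the ambient Hamming scheme is $Q$-polynomial, each $F_j$ is a polynomial in $A_1$ obtained by applying a Krawtchouk polynomial entrywise to the Hamming distances of pairs in $C$, so $F_j\in\mathcal{A}$. These five linearly independent idempotents therefore span a commutative, semisimple subalgebra of $\mathcal{A}$ having the same dimension as $\mathcal{A}$, and this subalgebra is manifestly closed under matrix multiplication. Expanding $A_iA_j$ in the basis $\{F_k\}$ and converting back to $\{A_k\}$ produces the intersection numbers $p_{ij}^k$, completing the proof.

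The main obstacle will be verifying rigorously that LP-equality plus self-complementarity force the dual support to have exactly five elements and that the $F_j$'s are genuine orthogonal idempotents (rather than merely pairwise trace-orthogonal self-adjoint matrices). This is precisely where the parity restriction of Lemma~\ref{lem:eq} enters, and where one must carefully track the invertibility of $K$ in the equality case of Theorem~\ref{thm:SCbound}; the remaining identifications are then routine Krawtchouk-polynomial manipulations, analogous to those already carried out in the proof of Lemma~\ref{lem:bound}(ii).
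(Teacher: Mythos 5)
Your reduction to the fourth axiom and your observation that each restricted matrix $G_kG_k^T=\sum_{j=0}^4K_k(\beta_j)A_j$ lies in $\mathcal{A}$ are both correct and consistent with the paper. The gap is in the pivotal claim that equality in the linear programming bound forces the dual inner distribution of $C$ to be supported on exactly five indices, from which you propose to extract five mutually orthogonal idempotents $F_j=\frac{1}{|X|}G_{i_j}G_{i_j}^T$ spanning $\mathcal{A}$. Complementary slackness in Delsarte's LP bound annihilates the dual coefficients $A_k^{\perp}$ only at those $k$ where the Krawtchouk expansion of $\alpha_C(z)$ has a strictly positive coefficient, namely $k=1,2,3,4$ (and $k=5$ by self-complementarity). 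It says nothing about $A_k^{\perp}$ for $k\geq 6$, so the dual support may contain far more than five indices: equality makes $C$ a $5$-design (an orthogonal array of strength $5$), which constrains the low end of the dual spectrum, not its size. Consequently only $F_0,F_1,F_2$ are guaranteed to be mutually orthogonal idempotents, since the relation $G_kG_k^TG_lG_l^T=|C|\,\delta_{kl}G_kG_k^T$ requires design strength at least $k+l$, and idempotency of $F_3$ alone would already require strength $6$. Three idempotents cannot span the five-dimensional space $\mathcal{A}$, so your argument for closure under multiplication does not go through.

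What actually closes the argument --- and what the paper does, following the Bannai--Bannai method for antipodal designs with $t=2s-3$ --- is to combine the strength-$5$ property with the antipodal relation $A_iA_4=A_4A_i=A_{4-i}$ coming from self-complementarity. The latter reduces the problem to showing $A_iA_j\in\mathcal{A}$ for $i,j\in\{1,2,3\}$; the former permits the products $\bigl(\sum_{k\leq\lambda}f_{\lambda,k}G_kG_k^T\bigr)\bigl(\sum_{l\leq\mu}f_{\mu,l}G_lG_l^T\bigr)$ to be evaluated in closed form for $\lambda,\mu\in\{0,1,2\}$ (so that $k+l\leq 4\leq 5$), yielding nine relations $\sum_{i=1}^{3}\sum_{j=1}^{3}\beta_i^{\lambda}\beta_j^{\mu}A_iA_j\in\mathcal{A}$; inverting the Vandermonde matrix $W\otimes W$ then isolates each individual product $A_iA_j$. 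To salvage your idempotent-based plan you would need to prove that the dual degree of $C$ is at most $4$, which does not follow from the hypotheses you have invoked.
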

\begin{proof}
It is sufficient to show that $(C,\{R_i\}_{i=0}^4)$ satisfies the
$4$-th condition in the definition of a symmetric association scheme given in
 Section~\ref{sec:AS},
namely, 
we show that $A_iA_j\in\mathcal{A}$ for $i,j\in\{0,1,\ldots,4\}$.
Since $A_0=I_{|C|}$, $A_iA_0=A_0A_i=A_{i}$ holds for $i\in\{0,1,\ldots,4\}$.
Since $C$ is self-complementary,  
$A_iA_4=A_4A_i=A_{4-i}$ holds for $i\in\{0,1,\ldots,4\}$.

Since $|C|=\frac{2n(n^2-4\alpha^2)}{3n-4\alpha^2-2}$,
the coefficients of $K_0(z)$ and $K_1(z)$ in $\alpha_C(z)$ are
$1$ and the other coefficients are positive by the calculation in the proof  of Theorem~\ref{thm:SCLP}.
By~\cite[Theorem~5.23 (iii)]{D}, 
$C$ is a $5$-design in the binary Hamming scheme, namely, 
$C$ is an orthogonal array of strength $5$.  

We denote the Krawtchouk expansion of $z^\lambda$ by
$z^\lambda=\sum_{l=0}^\lambda f_{\lambda,l}K_l(z)$, 
and define a polynomial by 
$F_{\lambda,\mu}(z)=\sum_{l=0}^{\min\{\lambda,\mu\}}f_{\lambda,l}f_{\mu,l}K_l(z)$.  
For $\lambda,\mu\in\{0,1,2\}$,   
expand $(\sum_{k=0}^{\lambda}f_{\lambda,k}G_kG_k^T)(\sum_{l=0}^{\mu}f_{\mu,l}G_lG_l^T)$ in two ways, 
where $G_k$ denotes the $k$-th characteristic matrix of $C$. 
In the following calculation, define $0^0$ to be $1$. 
By \cite[Theorem~5.18]{D}, 
\begin{multline*}
(\sum_{k=0}^{\lambda}f_{\lambda,k}G_kG_k^T)(\sum_{l=0}^{\mu}f_{\mu,l}G_lG_l^T)=|C|\sum_{k=0}^{\min\{\lambda,\mu\}}f_{\lambda,k}f_{\mu,k}G_kG_k^T \\
=|C|\sum_{k=0}^{\min\{\lambda,\mu\}}f_{\lambda,k}f_{\mu,k}\sum_{l=0}^4K_k(\beta_l)A_l\notag
=|C|\sum_{l=0}^{4}F_{\lambda,\mu}(\beta_l)A_l.
\end{multline*}

On the other hand, by \cite[Theorem~3.13]{D}, 
\begin{align*}
(\sum_{k=0}^{\lambda}&f_{\lambda,k}G_kG_k^T)(\sum_{l=0}^{\mu}f_{\mu,l}G_lG_l^T)
=(\sum_{k=0}^{\lambda}f_{\lambda,k}\sum_{i=0}^4K_k(\beta_i)A_i)(\sum_{l=0}^{\mu}f_{\mu,l}\sum_{j=0}^4K_l(\beta_j)A_j)
\displaybreak[0]\\
&=\sum_{k=0}^{\lambda}\sum_{l=0}^{\mu}\sum_{i=0}^4\sum_{j=0}^4f_{\lambda,k}f_{\mu,l}K_k(\beta_i)K_l(\beta_j)A_iA_j
=\sum_{i=0}^4\sum_{j=0}^4\beta_i^{\lambda}\beta_j^{\mu}A_iA_j\displaybreak[0]\\
&=\sum_{i=1}^3\sum_{j=1}^3\beta_i^{\lambda}\beta_j^{\mu}A_iA_j+\sum_{i=1}^3\beta_i^\lambda\beta_0^\mu A_i+\sum_{j=1}^3\beta_0^\lambda\beta_j^\mu A_j
+\sum_{i=1}^3\beta_i^\lambda\beta_4^\mu A_{4-i}\\
&\quad
+\sum_{j=1}^3\beta_4^\lambda\beta_j^\mu A_{4-j}+
\beta_0^{\lambda+\mu}A_0+
\beta_4^{\lambda+\mu}A_0+\beta_0^\lambda \beta_4^\mu A_4+\beta_4^\lambda \beta_0^\mu A_4.
\end{align*}
Thus, $\sum_{i=1}^3\sum_{j=1}^3\beta_i^{\lambda}\beta_j^{\mu}A_iA_j
\in \mathcal{A}$ for $i,j\in\{1,2,3\}$.
For $W=\left(\begin{smallmatrix}1&1&1\\
\beta_1&\beta_2&\beta_3\\\beta_1^2&\beta_2^2&\beta_3^2\end{smallmatrix}\right)$,
$W\otimes W$ is invertible.
Hence, $A_iA_j \in \mathcal{A}$ for $i,j\in\{1,2,3\}$.  
Therefore,  
$(C,\{R_i\}_{i=0}^4)$ is a symmetric association scheme. 
\end{proof}

\begin{lemma}
$(C,\{R_i\}_{i=0}^4)$ is $Q$-polynomial.
\end{lemma}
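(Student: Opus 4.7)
My plan is to adapt the approach of Bannai--Bannai in~\cite{BB} to the binary Hamming association scheme setting. The tightness hypothesis $|C|=\frac{2n(n^2-4\alpha^2)}{3n-2-4\alpha^2}$ forces the linear programming inequality in Theorem~\ref{thm:SCLP} to be an equality at the coefficient of $K_0(z)$. First I would use the Krawtchouk expansion computed there, together with the recursion~\eqref{eq:kp}, to verify that the coefficients of $K_0(z), K_1(z), \ldots, K_5(z)$ in $\alpha_C(z)$ all attain the extreme values characterized by \cite[Theorem~5.23 (iii)]{D} for $C$ to be a $5$-design in the binary Hamming association scheme. This $5$-design property is the key input for everything that follows (and was already noted in the proof of Lemma~\ref{lem:bound}).

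Next I would translate the $5$-design property into a statement about the characteristic matrices $G_k$ of $C$. Each Gram-type matrix $G_k G_k^T$ has $(x,y)$-entry proportional to $K_k(d(x,y))$, so it lies in $\mathcal{A}$. The $5$-design property, via \cite[Theorem~5.18]{D}, yields that $G_k^T G_k$ is a scalar multiple of the identity for $k \le 2$, so that $\frac{1}{|C|}G_k G_k^T$ is an idempotent in $\mathcal{A}$ of rank $K_k(0)=\binom{n}{k}$. Combined with the self-complementary structure of $C$, which for the binary Hamming scheme pairs $G_k$ with $G_{n-k}$ via the identity $K_{n-k}(z)=(-1)^z K_k(z)$, this furnishes five mutually orthogonal nonzero idempotents whose sum is $I_{|C|}$. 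Since $\dim \mathcal{A} = 5$, these are precisely the primitive idempotents of~$\mathcal{A}$.

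The $Q$-polynomial condition then follows by direct inspection. With the ordering obtained above, the $(j,i)$-entry of the $Q$-matrix of $(C,\{R_i\}_{i=0}^4)$ is, up to a constant, $K_i(\beta_j)$, where $0=\beta_0<\beta_1<\cdots<\beta_4=n$ are the distances of $C$. The first nontrivial column is $q_{j1}=K_1(\beta_j)=n-2\beta_j$, which takes five distinct values, and $K_i$ is a polynomial of degree $i$; by polynomial interpolation one obtains a unique $v_i$ of degree $i$ with $q_{ji}=v_i(q_{j1})$, which is the $Q$-polynomial property.

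The main obstacle is the middle step: making rigorous the claim that the matrices $\frac{1}{|C|}G_k G_k^T$ together with their self-complementary counterparts $\frac{1}{|C|}G_{n-k}G_{n-k}^T$ produce exactly five primitive orthogonal idempotents with no further collapsing or linear dependence. The numerical condition $t \ge 2s-3$ (here $5\ge 2\cdot 4-3$) is exactly what forces the relevant Fisher-type inequalities to become equalities, and it is here that the precise form of the hypothesis $|C|=\frac{2n(n^2-4\alpha^2)}{3n-2-4\alpha^2}$ enters decisively, as in the original argument of~\cite[Theorems~1.1, 1.2~(5)]{BB} whose adaptation we follow.
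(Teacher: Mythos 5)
There is a genuine gap, and it sits in the last step rather than in the step you flag as the main obstacle. Your plan is to exhibit the five primitive idempotents and then read off $Q$-polynomiality ``by direct inspection,'' using $q_{ji}\propto K_i(\beta_j)$ and interpolation against $q_{j1}=n-2\beta_j$. But the identity $q_{ji}\propto K_i(\beta_j)$ only holds for those $i$ for which $E_i=\frac{1}{|C|}G_iG_i^T$, and the $5$-design property (via \cite[Theorem~5.18]{D}, which needs $k+l\le t=5$) only delivers this for $i\le 2$. Your proposed source for the remaining two idempotents does not work: the matrices $\frac{1}{|C|}G_{n-k}G_{n-k}^T$ for $k=0,1,2$ are three further matrices (six in total, too many for a five-dimensional algebra), they are not shown to be idempotent (that would require $2(n-k)\le 5$), and when $\alpha$ is even they coincide with $\frac{1}{|C|}G_kG_k^T$ and give nothing new. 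In the paper, $E_3$ and $E_4$ are \emph{not} identified with any characteristic matrix: only $F_0,F_1,F_2$ are shown to be primitive (and even that requires a dimension-counting contradiction, not just the rank computation), while $E_3+E_4=I-F_0-F_1-F_2$ is all one knows a priori. Consequently the columns $i=3,4$ of the $Q$-matrix are not Krawtchouk evaluations, and the interpolation argument collapses. Moreover, even if one had five explicit idempotents, producing an interpolating polynomial through five points gives degree at most $4$; the assertion that a suitable ordering exists in which the degree is \emph{exactly} $i$ for each $i$ is precisely the content of the lemma and cannot be waved through.

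What the paper actually does after securing $E_i=\frac{1}{|C|}G_iG_i^T$ for $i=0,1,2$ is: (a) compute $A_4E_i=(-1)^iE_i$ and use the eigenvalue-multiset fact \cite[Chap.~II, Theorem~4.1~(ii)]{BI} to pin down $p_{34}=-1$, $p_{44}=1$; (b) deduce from the Krein identities of \cite[Lemma~2.3.1~(vii)]{BCN} and the nonnegativity of Krein numbers that $q_{i,j}^k=0$ whenever $i+j+k$ is odd ($Q$-bipartiteness); (c) compute $E_1\circ E_i$ for $i=0,1$ explicitly via the Krawtchouk recursion~\eqref{eq:kp}; and (d) invoke \cite[Chap.~II, Proposition~3.7]{BI} to force the vanishing and strict positivity pattern of the remaining $q_{1,i}^j$, so that the Krein matrix $B_1^*$ is tridiagonal with nonzero super- and subdiagonal. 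None of steps (a), (b), (d) appears in your outline, and they are where the $Q$-polynomial ordering of $E_3,E_4$ is actually established. You would need to supply this (or an equivalent) argument; the $5$-design input and the idempotency of $\frac{1}{|C|}G_kG_k^T$ for $k\le 2$ are correct but are only the starting point.
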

\begin{proof}
Set $F_i=\frac{1}{|C|}G_iG_i^T$ for $i=0,1,2,3$ and
$F_4=I-\sum_{i=0}^2F_i$.
We claim that $\{F_0,F_1,F_2\}$ is a subset of the set of primitive idempotents of $\mathcal{A}$. 
Let $E_i$ ($i = 0,1,\ldots,4$) be primitive idempotents. 
Assume that $F_i\not\in\{E_0,E_1,\ldots,E_4\}$ for some $i\in \{0,1,2\}$. 
Since $F_i$ is an idempotent, we have decomposition $F_i=E+E'$ satisfying $E,E'\neq O$, $E^2=E$, $E'^2=E'$ and $EE'=O$, where $O$ denotes the the zero matrix. 
Then $\{F_0,F_1,F_2,E,E'\}\setminus\{F_i\}$ is a set of elements which are linear independent. 
Thus, $\langle F_3,F_4 \rangle$ has dimension $1$. 
Hence, there exists a nonzero real number $c$ such that $F_4=cF_3$. 
Then 
$
A_0-\frac{1}{|C|}\sum_{i=0}^{2}\sum_{j=0}^4 K_{i}(\beta_j)A_j=\frac{c}{|C|}\sum_{j=0}^4K_{3}(\beta_j)A_j,  
$
and thus we obtain 
$
cK_{3}(\beta_j)+\sum_{i=0}^2K_i(\beta_j)=0
$
for any $j\in\{1,2,3,4\}$.
Since all $\beta_j$ are distinct and the degree of $cK_3(z)+\sum_{i=0}^2K_i(z)$ is at most three, this is a contradiction. 
Therefore, we may assume $E_i=F_i$ for $i=0,1,2$. 

For $i=0,1,2$,
\begin{align*}
A_4E_i&=\frac{1}{|C|}\sum_{j=0}^4 K_i(\beta_j)A_4A_j
=\frac{1}{|C|}\sum_{j=0}^4 K_i(\beta_j)A_{4-j}=\frac{1}{|C|}\sum_{j=0}^4 K_i(\beta_{4-j})A_{j}\\
&=\frac{1}{|C|}\sum_{j=0}^4 K_i(n-\beta_{j})A_{j}=\frac{1}{|C|}\sum_{j=0}^4(-1)^i K_i(\beta_{j})A_{j}=(-1)^iE_i.
\end{align*}
Thus, $p_{i4}=(-1)^i$ for $i=0,1,2$.
By \cite[Chap.~II, Theorem~4.1~(ii)]{BI}, 
$\{p_{04},p_{14},\ldots,p_{44}\}=
\{\gamma_0,\gamma_1,\ldots,\gamma_4\}$
as a multiset, where
$\gamma_i$ $(i=0,1,\ldots,4)$ are the eigenvalues of
the matrix
$\left(\begin{smallmatrix}0&0&0&0&1\\0&0&0&1&0\\0&0&1&0&
0\\0&1&0&0&0\\1&0&0&0&0\end{smallmatrix}\right)$.
Thus, we may assume that $p_{34}=-1$ and $p_{44}=1$.

By \cite[Lemma~2.3.1~(vii)]{BCN}, 
\begin{align}\label{eq:as1}
q_{0i}q_{0j}=\sum_{k=0}^4 q_{i,j}^k q_{0k},\quad
q_{4i}q_{4j}=\sum_{k=0}^4 q_{i,j}^k q_{4k}.
\end{align}
By \cite[Chap.~II, Theorem~3.5~(i)]{BI} and $p_{i4}=(-1)^i$,
 $q_{4i}=(-1)^i q_{0i}$ for $i=0,1,\ldots,4$. 
Substituting these into \eqref{eq:as1}, we obtain 
\begin{align}\label{eq:as3}
(-1)^{i+j}q_{0i}q_{0j}&=\sum_{k=0}^4 q_{i,j}^k (-1)^k q_{0k}.
\end{align}
By \eqref{eq:as1} and \eqref{eq:as3}, we have $\sum_{k=0}^4(1-(-1)^{i+j+k})q_{i,j}^k q_{0k}=0$. 
Since $q_{0k}>0$ and $q_{i,j}^k\geq0$, we obtain 
\begin{align}\label{eq:as4}
q_{i,j}^k=0 \text{ if $i+j+k$ is odd}. 
\end{align}

For $i=0,1$, 
\begin{align*}
|C|E_1\circ |C|E_i&=\sum_{l=0}^4 K_1(\beta_l)K_i(\beta_l)A_l\\
&=\sum_{l=0}^4 (n-i+1)K_{i-1}(\beta_l)A_l+\sum_{l=0}^4 (i+1)K_{i+1}(\beta_l)A_l\\
&=(n-i+1)|C|E_{i-1}+(i+1)|C|E_{i+1}.
\end{align*}
Thus, $q_{1,i}^{i-1}=n-i+1$ and $q_{1,i}^{i+1}=i+1$ for $i=0,1$, and $q_{1,i}^j=0$ for $i=0,1,j\neq i-1,i+1$.   
By \cite[Chap.~II, Proposition~3.7~(v)]{BI}, $q_{1,2}^1=n-1$, and 
by \cite[Chap.~II, Proposition~3.7~(vi)]{BI}, $q_{1,i}^j=0$ for $(i,j)\in\{(2,0),(3,0),(4,0),(3,1),(4,1)\}$. 
By \eqref{eq:as4},  $q_{1,i}^j=0$ for $(i,j)\in\{(2,2),(3,3),$ $(4,4),(2,4),(4,2)\}$. 
Again by \cite[Chap.~II, Proposition~3.7~(vi)]{BI}, $q_{1,i}^j>0$ for  $(i,j)\in\{(2,3),(3,2),(3,4),(4,3)\}$. 
The Q-polynomiality is equivalent to the condition that the
Krein matrix $B_1^*=(q_{1,j}^k)$ is a tridiagonal matrix with nonzero
entries on the superdiagonal and the subdiagonal (see \cite[p.~193]{BI}).
This completes the proof of the fact that the association scheme $(C,\{R_i\}_{i=0}^4)$ is $Q$-polynomial.
\end{proof}

\end{document}